\documentclass[12pt,a4paper,reqno]{amsart}
\usepackage{cmap}
\usepackage[T1]{fontenc}
\usepackage[section]{placeins}
\usepackage{graphicx}
\usepackage{algorithm} 
\usepackage{algpseudocode} 
\usepackage{subcaption}
\usepackage{multirow}

\usepackage{booktabs}
\usepackage{setspace,mathrsfs,amsthm,amsmath,amssymb,amsfonts,lipsum,appendix,mathtools,cite,tabularx,fontenc,bbm,caption}

\usepackage[shortlabels]{enumitem}
\makeatletter
\newtheorem*{rep@theorem}{\rep@title}
\newcommand{\newreptheorem}[2]{%
	\newenvironment{rep#1}[1]{%
		\def\rep@title{#2 \ref{##1}}%
		\begin{rep@theorem}}%
		{\end{rep@theorem}}}
\makeatother

\newtheorem{theorem}{Theorem}[section]

\usepackage[nointegrals]{wasysym}
\usepackage[misc]{ifsym}
\usepackage[dvipsnames]{xcolor}
\definecolor{ao}{rgb}{0.0, 0.5, 0.0}
\definecolor{lasallegreen}{rgb}{0.0, 0.3, 0.0}
\usepackage{hyperref,tikz}
\hypersetup{
	colorlinks=true,
	linkcolor=blue,
	filecolor=magenta,      
	urlcolor=violet,
	citecolor=OrangeRed,
}

\usepackage[
  hmarginratio={1:1},     
  vmarginratio={2:1},     
  textwidth=400pt,        
  heightrounded,          
]{geometry}
\usetikzlibrary{calc,patterns,angles,quotes,intersections}

\usepackage{wrapfig}

\allowdisplaybreaks

\newcommand{\x}[0]{\boldsymbol{x}}
\newcommand\blfootnote[1]{%
	\begingroup
	\renewcommand\thefootnote{}\footnote{#1}%
	\addtocounter{footnote}{-1}%
	\endgroup
}

\newtheorem{lemma}{Lemma}[section]

\newtheorem{ass}{Assumption}

\newtheorem{remark}{Remark}[section]
\newcommand{\mb}[1]{\mathbf{#1}}
\newcommand{\tv}{X}
\newcommand{\tw}{Y}

\newcommand{\norm}[1]{\vert\vert #1\vert\vert}

\usepackage[hyperpageref]{backref}

\usepackage{orcidlink}

\usepackage{a4wide}
\usepackage{csquotes}

\usepackage{amsfonts}
\usepackage{amssymb,amsthm}
\usepackage{amsmath}
\allowdisplaybreaks
\usepackage{mathtools}
\mathtoolsset{showonlyrefs}

\numberwithin{equation}{section}

\usepackage {setspace}

\usepackage{enumitem}

\setlist{nosep}

\usepackage{todonotes}

\begin{document}
\singlespacing

\title[Space-time IgA for biharmonic wave equation]{An unconditionally stable space-time isogeometric method for a biharmonic wave equation}

\author[Shreya Chauhan]{Shreya Chauhan$^{1,*}$}
\author[Sudhakar Chaudhary]{Sudhakar Chaudhary$^1$}
\blfootnote{$^{*}$Corresponding author.}
\blfootnote{$^1$Department of Basic Sciences (Mathematics), Institute of Infrastructure, Technology, Research and Management, Gujarat, India.}

\blfootnote{%
E-mail addresses:
\href{mailto: shreyachauhan898@gmail.com}{ shreyachauhan898@gmail.com},
\href{mailto:sudhakarchaudhary@iitram.ac.in}{dr.sudhakarchaudhary@gmail.com}
}

\subjclass[2020]{ 65M12, 65M22, 65M60
   }
\keywords{ biharmonic wave equation, space-time isogeometric analysis, CFL condition,  unconditional stability }

\begin{abstract}
	This work presents a space-time isogeometric analysis of biharmonic wave problem, in contrast to the more common application of space-time methods to second-order wave equations. We first establish the unique solvability of the continuous space-time variational formulation. In order to obtain $H^2$- conforming discretization of the biharmonic wave equation, we consider globally smooth B-spline functions having continuity higher than $C^0$. We prove that the resulting space-time discrete formulation is stable under a Courant–Friedrichs–Lewy (CFL) condition. Furthermore, we propose a stabilized formulation, achieved by adding a non-consistent penalty term, which yields unconditional stability. Exploiting the tensor product structure, an efficient direct solver is also provided for solving the linear system arising from the discrete formulation. A few numerical experiments are presented to demonstrate the stability and convergence properties of the proposed scheme as well as the efficiency of the proposed solver.
	\end{abstract}

\maketitle
\definecolor{lblack}{gray}{0.3}
\definecolor{mygray}{gray}{0.9}
\definecolor{vlgray}{gray}{0.96}
\definecolor{medgray}{gray}{0.8}
\definecolor{dgray}{gray}{0.7}
\begin{quote}	
	\setcounter{tocdepth}{1}
	\tableofcontents
	\addtocontents{toc}{\vspace*{0ex}}
\end{quote}
 \section{Introduction}In this work, we focus on the following biharmonic wave equation:
\begin{equation}\label{eqbiwave}
	\partial_{tt}u+\Delta^2u=f\quad\mbox{in }Q:=\Omega\times(0,T),
\end{equation}
with initial conditions
\begin{equation}\label{inicon}
	u(\x,0)=0\mbox{ and }\partial_tu(\x,0)=0\quad\mbox{in }\Omega
\end{equation}
and clamped boundary conditions 
\begin{equation}\label{cbc}
	\displaystyle u=\frac{\partial u}{\partial n}=0\quad\mbox{on }\Sigma:=\partial\Omega\times(0,T),
\end{equation}
where $\Omega\subset\mathbb{R}^d, d=1,2$ is a bounded domain with smooth boundary $\partial\Omega$, $f$ is a given function, $\partial_tu$ and $\partial_{tt}u$ are first and second order time derivatives respectively, $n$ is the outward unit normal to the boundary $\partial\Omega$ and $\displaystyle\frac{\partial u}{\partial n}$ is the outer normal derivative of $u$ on $\partial\Omega$. Problem \eqref{eqbiwave} arises in many real-world phenomena such as plate bending and elasticity of thin plates. This equation also serves as a foundational model for more complex nonlinear problems, such as the Euler-Bernoulli equation which models the deflection of viscoelastic plates \cite{thinplates}. Further applications include the analysis of interacting water waves \cite{waterwaves}, the idealization of the suspension bridge \cite{LAZER1987243}, vibration of beams \cite{vibrationbeam}.\\\\
The numerical study of the fourth-order problems has been an attractive area of interest among the researchers. Typically for the fourth-order problems, the conforming Finite Element Methods (FEM) require $C^1$-continuous basis functions to obtain $H^2$-conforming finite dimensional spaces. While such conforming elements exist, such as the Argyris and Bogner-Fox-Schmit element, their practical implementation is often very challenging. Apart from the conforming FEMs, other alternatives available in the literature are nonconforming FEM using the Morley element \cite{morley}, discontinuous Galerkin FEM \cite{dGFEM}, $C^0$ interior penalty method \cite{C0interior} or the mixed FEM \cite{DAS202452} to name a few. A variety of finite element strategies have been employed for the numerical solution of fourth-order wave equations.  A mixed finite element formulation, which reduces the fourth-order problem to a system of second-order equations, was rigorously analyzed for the case of Neumann boundary conditions in \cite{HE20131}. Authors in \cite{TAO2021113230} developed the ultra-weak discontinuous Galerkin method for the discretization of nonlinear fourth-order wave equation and proved the $L^2$ error estimates. In contrast, a direct $C^1$-conforming approach was analyzed in \cite{C1bihar}, where the authors developed a Galerkin-collocation method utilizing the Bogner-Fox-Schmit element to naturally satisfy the smoothness requirements of problem \eqref{eqbiwave}. He et al. \cite{HE2023333} constructed a two-level discretization for fourth-order wave equation based on the Crank-Nicolson method and Lagrange finite elements, demonstrating its energy-conserving characteristics. Recently, in \cite{nataraj} a lowest-order nonstandard finite element method for problem \eqref{eqbiwave} was analyzed using both explicit and implicit time discretization schemes. \\\\
As an alternative to the classical FEM, Isogeometric Analysis (IgA) has been widely used for solving fourth-order partial differential equations, due to its capacity for constructing $H^2$-conforming discretization with ease. Introduced by Hughes et al. \cite{IgAIntro}, IgA was developed to bridge the gap between Computer-Aided Design (CAD) and numerical simulation. Its fundamental principle is to use the same smooth basis functions—typically B-splines or NURBS—for both representing the computational geometry and approximating the solution. A key advantage of IgA is the ability to represent complex geometries exactly, providing a significant improvement over the classical FEM. Furthermore, beyond standard $h$- and $p$-refinement, IgA features a unique $k$-refinement process, which increases the smoothness of the basis functions across element boundaries. This yields globally $C^k$-continuous functions, enabling the conforming discretization of the higher-order problems with standard Galerkin method \cite{IgAhigh}. Recently, in \cite{Sogn2023,MANNI2023116314,Meng2024} the IgA has been analyzed for various fourth-order problems.\\\\
The numerical analysis of time-dependent problems is primarily divided into two frameworks: time-stepping schemes and space-time schemes. The conventional time-stepping approach consists of discretizing the partial differential equations (PDEs) separately in the space (using the FEM or IgA) and in time (using finite difference scheme). In contrast, space-time methods treat the temporal variable as an additional spatial variable, enabling an altogether discretization of the entire space-time cylinder. This paradigm offers several significant advantages, including the capacity for simultaneous space-time adaptivity, a natural framework for handling problems with moving boundaries, and enhanced potential for parallelization across both spatial and temporal domains. Within this space-time framework, various finite element methodologies have been developed and analyzed, such as the discontinuous Galerkin method, the continuous Galerkin method, and the Petrov-Galerkin method; we refer to \cite{Steinbachreview} for a comprehensive overview. The space-time isogeometric analysis provides additional benefit of more accurate solution due to the use of smooth basis functions in time variable also. In \cite{LANGER2016342}, the space-time isogeometric method was first analyzed for the parabolic evolution problems. Space-time isogeometric Galerkin method and the space-time least square method for second order parabolic problem have been proposed in \cite{stigasteinbach} and \cite{leastsquare} respectively. The space-time isogeometric discretization of the time-dependent problems give rise to a large matrix system which requires a lot of memory and computational time. To overcome this, various efficient solvers have been proposed and analyzed in \cite{stigasteinbach,leastsquare,Chaudhary2026}.\\\\
In recent years, various space-time methods for second-order wave equation are proposed and analyzed. Henning et al. \cite{Henning2022} derived an ultra-weak formulation via double integration by parts in space and time and analyzed Petrov-Galerkin schemes for the same. A Hilbert transformation-based Galerkin-Bubnov FEM for second-order wave equation was proposed in \cite{coer}, where the well-posedness of the continuous formulation was established and the discrete scheme was proven to be stable under a Courant–Friedrichs–Lewy (CFL) condition. Steinbach et al. \cite{Steinbach2019} and Zank \cite{Zank2021} introduced stable formulations by adding a projection term to the discrete formulation. The framework of \cite{coer,Steinbach2019} was extended to space-time isogeometric analysis in \cite{Fraschini2024}, which adopted a non-consistent penalty term for stabilization, differing from the approach in \cite{Steinbach2019}. The stability of this formulation was further investigated through a matrix-based analysis in \cite{Ferrari2025}. Additional innovative space-time methods for wave propagation are discussed in \cite{Bignardi2025, arxivwave, arxivwave2}. However, despite this considerable progress for the classical second-order wave equation, the literature lacks an analysis of space-time methods for its higher-order counterpart, the biharmonic wave equation \eqref{eqbiwave}.\\\\
Motivated by above, this work presents the analysis of a space-time isogeometric method for the biharmonic wave equation \eqref{eqbiwave}. To the best of our knowledge, this is the first work that analyzes the biharmonic wave equation \eqref{eqbiwave} in space-time isogeometric framework. The primary contributions of this study are as follows:
\begin{itemize}
	\item We derive a space-time variational formulation for problem \eqref{eqbiwave} by applying integration by parts in space and time. The well-posedness of the continuous variational problem is established, following the analytical framework developed for second-order wave equations in \cite{coer}.
	\item We propose a space-time isogeometric discretization of \eqref{eqbiwave} using globally smooth splines basis functions, which naturally provide the required $H^2$-conformity in the spatial domain.
	\item In \cite{coer,Fraschini2024}, the stability analysis (CFL-type condition) of the space-time discretization to the second-order wave equation is extensively studied, however such analysis for the space-time discretization of fourth-order wave equation \eqref{eqbiwave} is not available in the literature. In this work, a CFL condition of the form $h_t < C_{\Omega} h_s^2$ for the space-time isogeometric discretization of \eqref{eqbiwave} is derived, where $h_t$, $h_s$ are temporal and spatial mesh sizes respectively and $C_{\Omega}$ is a constant depending on the domain $\Omega$.
	\item Building upon the ideas of \cite{Fraschini2024}, we provide a stabilized space-time isogeometric discretization for the fourth-order problem \eqref{eqbiwave} by adding a non-consistent penalty term. The proposed stabilization does not required any CFL restriction on the mesh sizes $h_s$ and $h_t$.
	\item By exploiting the tensor product structure of the isogeometric spaces, we analyze an efficient direct solver for the linear system arising from the stabilized isogeometric formulation of \eqref{eqbiwave}. 
\end{itemize}
The remainder of the paper is organized as follows: In Section \ref{sec:pre}, we recall some fundamental results regarding the second-order ordinary differential equation, which we will further use in our analysis. Section \ref{sec:stvf continuous} is devoted to the space-time variational formulation of \eqref{eqbiwave} and the well-posedness results. In Section \ref{sec4:stiga}, the space-time isogeometric discrete formulation is obtain which is shown to be stable under a CFL condition. We also propose a stabilized discrete formulation in this section. In Section \ref{sec:solver}, we develop an efficient solver to solve the linear system obtained from the stabilized space-time isogeometric formulation. Finally, in Section \ref{sec:numerical}, we provide results of some numerical experiments to support the stability and convergence properties of the proposed scheme and the computational efficiency of the linear solver.\\\\
\noindent\textbf{Notations:} By $H^m(\Omega)$, we denote the Sobolev space of order $m$ equipped with norm $\vert\vert u\vert\vert_{H^m(\Omega)}=\left(\displaystyle\sum_{\alpha\leq m}\vert\vert D^{\alpha}u\vert\vert^2_{L^2(\Omega)}\right)^{1/2}$. We consider $H^2_0(\Omega)$, a subspace of $H^2(\Omega)$ of functions having zero trace and equipped with the norm $\vert\vert u\vert\vert_{H^2_0(\Omega)}$=$\vert\vert\Delta u\vert\vert_{L^2(\Omega)}$. By $V'$ we denote the dual space of the Banach space $V$.  For a Banach space $V$, the Bochner space is defined as $L^2(V)=\displaystyle\left\{u:(0,T)\rightarrow V:\int_{0}^{T}\vert\vert u\vert\vert_V^2\ dt<\infty\right\}$.  To derive the space-time weak formulation of \eqref{eqbiwave}, we consider the following spaces:
\begin{equation*}
	X=H^{2,1}_{0;0,}(Q):=L^2(H^2_0(\Omega))\cap H^1_{0,}(L^2(\Omega))
\end{equation*}
and
\begin{equation*}
	Y=H^{2,1}_{0;,0}(Q):=L^2(H^2_0(\Omega))\cap H^1_{,0}(L^2(\Omega)),
\end{equation*}
where 
\begin{equation*}
	H^1_{0,}(L^2(\Omega))=\{v\in H^1(L^2(\Omega)):v(\x,0)=0\mbox{ in }\Omega\}
\end{equation*}
and
\begin{equation*}
	H^1_{,0}(L^2(\Omega))=\{v\in H^1(L^2(\Omega)):v(\x,T)=0\mbox{ in }\Omega\}.
\end{equation*}
The norms on the spaces $X$ and $Y$ are given as follow:
\begin{equation*}
	\norm{u}_X^2=\norm{\partial_tu}^2_{L^2(L^2(\Omega))}+\norm{\Delta u}^2_{L^2(L^2(\Omega))}
\end{equation*}
and\begin{equation*}
	\norm{u}_Y^2=\norm{\partial_tu}^2_{L^2(L^2(\Omega))}+\norm{\Delta u}^2_{L^2(L^2(\Omega))}
\end{equation*}
We define the spaces of functions depending only on time variable as follows:
\begin{equation*}
    H^1_{0,}(0,T)=\{v\in H^1(0,T):v(0)=0\}\mbox{ and } H^1_{,0}(0,T)=\{v\in H^1(0,T):v(T)=0\}
\end{equation*}equipped with the norm $\vert\vert v\vert\vert_{H^1_{0,}(0,T)}=\vert\vert v\vert\vert_{H^1_{,0}(0,T)}=\vert\vert\partial_t v\vert\vert_{L^2(0,T)}=\vert v\vert_{H^1(0,T)}$.

 \section{Preliminaries and foundational results}\label{sec:pre}
Before proceeding with the analysis of \eqref{eqbiwave}, we first  recall several essential theorems and lemmas concerning second-order ordinary differential equations (ODEs). These foundational results will be directly applied in the development of the space-time method for \eqref{eqbiwave}.\\
We consider the following ODE: 
\begin{equation}\label{ode}
	\partial_{tt}\varphi(t)+\mu \varphi(t)=f(t)\quad\mbox{for }t\in(0,T),\quad \varphi(0)=\partial_t\varphi(0)=0,
\end{equation}
 where $\mu>0$. To obtain the space-time variational formulation of \eqref{ode}, we multiply \eqref{ode} with a test function $v\in H^1_{ ,0}(0,T)$  and apply integration by parts once, to get the variational formulation as follows: find $\varphi\in H^1_{0, }(0,T)$ such that
\begin{equation}\label{odevf}
	-\int_{0}^{T}\partial_t\varphi(t)\partial_tv(t)\ dt+\mu\int_{0}^{T}\varphi(t)v(t)\ dt=\langle f,v\rangle_{(0,T)}\quad\mbox{for all }v\in H^1_{,0}(0,T), 
\end{equation}
where $f\in(H^1_{,0}(0,T))'$ is given. The unique solvability of \eqref{odevf} is studied by Steinbach et al. \cite{coer} and for this purpose, they considered the transformation $\overline{\mathcal{H}}_T:H^1_{0,}(0,T)\rightarrow H^1_{,0}(0.T)$ defined by
\begin{equation*}
	(\overline{\mathcal{H}}_Tv)(t):=v(T)-v(t),\quad t\in(0,T).
\end{equation*}
The map $\overline{\mathcal{H}}_T$ is an isometry with respect to $\vert\cdot\vert_{H^1(0,T)}$. Hence, the well-posedness of \eqref{odevf} is equivalent to the well-posedness of the following variational formulation: find $\varphi\in H^1_{0,}(0,T)$ such that
\begin{equation}\label{odevfh}
	-\int_{0}^{T}\partial_t\varphi(t)\partial_t(\overline{\mathcal{H}}_Tv)(t)\ dt+\mu\int_{0}^{T}\varphi(t)(\overline{\mathcal{H}}_Tv)(t)\ dt=\langle f,\overline{\mathcal{H}}_Tv\rangle_{(0,T)}\quad\mbox{for all }v\in H^1_{0,}(0,T).
\end{equation}
The well-posedness of \eqref{odevfh} can be proven by utilizing the Fredholm theory. In particular, we have the following result.
\begin{theorem}\label{wellposode}\cite{coer}
	Let $f\in (H^1_{,0}(0,T))'$. Then there exists a unique $\varphi\in H^1_{0,}(0,T)$ which satisfies \eqref{odevfh} and 
	\begin{equation*}
		\vert{\varphi}\vert_{H^1(0,T)}\leq \left(\frac{2+\sqrt{\mu}T}{2}\right)\norm{f}_{(H^1_{,0}(0,T))'}.
	\end{equation*}
\end{theorem}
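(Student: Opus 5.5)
The plan is to solve the ODE explicitly and estimate the solution directly, rather than going through an abstract Fredholm argument; Fredholm theory is then only a cross-check. Since $\overline{\mathcal{H}}_T$ is a bijective isometry from $H^1_{0,}(0,T)$ onto $H^1_{,0}(0,T)$, formulations \eqref{odevfh} and \eqref{odevf} are equivalent, so we may work with \eqref{odevf}.

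\textbf{Uniqueness.} Set $\omega=\sqrt{\mu}$ and suppose $f=0$. Testing \eqref{odevf} with $v\in C_0^\infty(0,T)$ gives $\partial_{tt}\varphi=\mu\varphi$ in the distributional sense, so $\varphi$ is smooth and equals $a\sin(\omega t)+b\cos(\omega t)$. The condition $\varphi(0)=0$ forces $b=0$. Next integrate by parts with a general $v\in H^1_{,0}(0,T)$ satisfying $v(0)\neq 0$. The boundary term $\partial_t\varphi(0)v(0)$ must vanish, so $\partial_t\varphi(0)=0$, which forces $a=0$. Hence the problem is injective.

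\textbf{Existence.} First assume $f\in L^2(0,T)$. Duhamel's formula gives
\begin{equation*}
\varphi(t)=\frac{1}{\omega}\int_0^t \sin(\omega(t-s))f(s)\,ds .
\end{equation*}
This function satisfies $\varphi(0)=0$ and the equation in the weak form \eqref{odevf}. The natural boundary condition $\partial_t\varphi(0)=0$ is encoded by the test functions not vanishing at $0$.

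For a general $f\in(H^1_{,0}(0,T))'$, use the Riesz representation in $H^1_{,0}(0,T)$ with the seminorm: $\langle f,v\rangle=\int_0^T g\,\partial_t v\,dt$ for some $g\in L^2(0,T)$ with $\|g\|_{L^2}=\|f\|_{(H^1_{,0})'}$. Integrating by parts formally in the Duhamel formula yields
\begin{equation*}
\varphi(t)=-\int_0^t\cos(\omega(t-s))\,g(s)\,ds+c(t),
\end{equation*}
where $c(t)$ collects boundary terms. Equivalently, one writes down $\varphi$ directly and checks \eqref{odevf}.

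\textbf{Stability estimate.} The bound should follow from
\begin{equation*}
\partial_t\varphi(t)=-g(t)+\omega\int_0^t\sin(\omega(t-s))\,g(s)\,ds,
\end{equation*}
possibly up to the sign conventions fixed by \eqref{odevf}. By Young's convolution inequality, or by Cauchy--Schwarz, the integral term is bounded in $L^2(0,T)$ by
\begin{equation*}
\omega\,\|\sin(\omega\,\cdot)\|_{L^1(0,T)}\,\|g\|_{L^2}.
\end{equation*}
Bounding $\|\sin(\omega\,\cdot)\|_{L^1(0,T)}$ crudely by $T$ gives $|\varphi|_{H^1}\le(1+\omega T)\|f\|$. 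This has the right shape but a factor-two worse constant in front of $\omega T$.

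The sharp constant $\frac{2+\sqrt{\mu}T}{2}=1+\frac{\omega T}{2}$ is the main obstacle. My first attempt is to estimate the kernel operator $K g(t)=\omega\int_0^t \sin(\omega(t-s))g(s)\,ds$ through its Hilbert--Schmidt norm:
\begin{equation*}
\|K\|^2\le \omega^2\int_0^T\!\!\int_0^t\sin^2(\omega(t-s))\,ds\,dt\le \omega^2\frac{T^2}{2},
\end{equation*}
which gives $\omega T/\sqrt2$. That is still not $\omega T/2$. I would then instead exploit a Volterra/energy argument: test the equation against $\partial_t\varphi$-type functions on $(0,t)$ and integrate in $t$. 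This yields an identity whose cross term produces exactly the factor $\tfrac12 T$ through $\int_0^T\int_0^t\cdots=\tfrac12\int\int$.

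If reproducing the constant exactly proves difficult, I would follow \cite{coer} and rely on the explicit representation together with this time-averaged energy identity.
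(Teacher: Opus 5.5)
\textbf{The gap is the stability constant, which is part of the statement.} As written, your argument only gives $|\varphi|_{H^1(0,T)}\le(1+\omega T/\sqrt2)\,\|f\|_{(H^1_{,0}(0,T))'}$. The ``Volterra/energy identity'' that is supposed to produce the factor $\tfrac12 T$ is never written down, and deferring to \cite{coer} is not a proof. The loss is in one line of your Hilbert--Schmidt attempt. Bounding $\sin^2\le 1$ throws away exactly a factor $2$ in the double integral, because $\sin^2$ has mean $\tfrac12$. Evaluating the integral exactly instead gives
\begin{equation*}
\int_0^T\!\!\int_0^t\sin^2(\omega(t-s))\,ds\,dt=\int_0^T\Bigl(\frac t2-\frac{\sin(2\omega t)}{4\omega}\Bigr)\,dt=\frac{T^2}{4}-\frac{\sin^2(\omega T)}{4\omega^2}\le\frac{T^2}{4}.
\end{equation*}
Cauchy--Schwarz then yields $\|Kg\|_{L^2(0,T)}\le\frac{\omega T}{2}\|g\|_{L^2(0,T)}$. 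Hence $|\varphi|_{H^1(0,T)}\le\|g\|_{L^2(0,T)}+\|Kg\|_{L^2(0,T)}\le\bigl(1+\tfrac{\sqrt{\mu}\,T}{2}\bigr)\|f\|_{(H^1_{,0}(0,T))'}$, which is exactly $\frac{2+\sqrt\mu T}{2}$. No energy argument is needed.

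The rest of your proposal is sound. For solvability it is more elementary than the route the paper relies on: the paper only cites \cite{coer} and points to Fredholm theory (coercive $\int_0^T\partial_t\varphi\,\partial_t v\,dt$ part plus a compact $\mu$-part, with injectivity giving bijectivity). Your explicit formula delivers existence, uniqueness and the bound in one stroke. Two small repairs are needed:
\begin{enumerate}
\item Testing with $C_0^\infty$ functions gives $\partial_{tt}\varphi=-\mu\varphi$, not $+\mu\varphi$. Your $\sin/\cos$ ansatz already solves the correct equation.
\item No boundary correction is needed: $c\equiv 0$. The function $\varphi(t)=-\int_0^t\cos(\omega(t-s))g(s)\,ds$ lies in $H^1_{0,}(0,T)$. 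Moreover $q:=\partial_t\varphi+g=Kg$ satisfies $q(0)=0$ and $\partial_t q=-\mu\varphi$. So for $v\in H^1_{,0}(0,T)$, integration by parts gives $-\int_0^T q\,\partial_t v\,dt=-\mu\int_0^T\varphi v\,dt$. This is exactly \eqref{odevf} with $\langle f,v\rangle=\int_0^T g\,\partial_t v\,dt$.
\end{enumerate}
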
When considering $f\in (H^1_{,0}(0,T))'$, as discussed in Theorem \ref{wellposode}, the bound on $\varphi$ depends explicitly on $\mu.$ However, authors in \cite{coer} proved an estimate independent of $\mu$ when considering $f\in L^2(0,T)$ which is given in the following Lemma.
\begin{lemma}\label{odebound}\cite{coer}
    Let $f\in L^2(0,T)$. Then the unique solution $\varphi$ of \eqref{odevfh} satisfies the following
    \begin{equation*}
        \vert{\varphi}\vert^2_{H^1(0,T)}+\mu\norm{\varphi}^2_{L^2(0,T)}\leq \frac{1}{2}T^2\norm{f}^2_{L^2(0,T)}.
    \end{equation*}
\end{lemma}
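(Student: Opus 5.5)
The plan is an energy argument for the strong solution of \eqref{ode}, followed by an integration in time that produces the $L^2(0,T)$ norms on the left-hand side.

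\textbf{Step 1 (regularity).} For $f\in L^2(0,T)$, the unique solution $\varphi\in H^1_{0,}(0,T)$ of \eqref{odevfh} from Theorem \ref{wellposode} should coincide with the classical solution of \eqref{ode}. Because $\overline{\mathcal{H}}_T$ is a bijection, \eqref{odevfh} is equivalent to \eqref{odevf}. Testing \eqref{odevf} with $v\in C_c^\infty(0,T)$ gives $\partial_{tt}\varphi=f-\mu\varphi\in L^2(0,T)$ in the distributional sense, so $\varphi\in H^2(0,T)$. Testing then with a $v$ satisfying $v(0)\neq 0$ and $v(T)=0$ recovers the natural condition $\partial_t\varphi(0)=0$. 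Hence all pointwise identities below are justified.

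\textbf{Step 2 (energy identity).} Multiply \eqref{ode} by $\partial_t\varphi$ and integrate over $(0,t)$. Using $\varphi(0)=\partial_t\varphi(0)=0$ this gives
\begin{equation*}
E(t):=\tfrac12\big(\vert\partial_t\varphi(t)\vert^2+\mu\vert\varphi(t)\vert^2\big)=\int_0^t f(s)\,\partial_t\varphi(s)\,ds .
\end{equation*}
The key observation is that the quantity to be bounded is exactly $A:=\vert\varphi\vert^2_{H^1(0,T)}+\mu\norm{\varphi}^2_{L^2(0,T)}=2\int_0^T E(t)\,dt$.

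\textbf{Step 3 (Gronwall-type bound, independent of $\mu$).} Set $G(t):=2\int_0^t E(s)\,ds$. Then $G(t)\ge\norm{\partial_t\varphi}^2_{L^2(0,t)}$, so Cauchy--Schwarz in Step 2 gives
\begin{equation*}
G'(t)=2E(t)\le 2\norm{f}_{L^2(0,t)}\sqrt{G(t)}.
\end{equation*}
This is the crucial closed differential inequality for $\sqrt G$, namely $(\sqrt G)'\le\norm{f}_{L^2(0,t)}$. Integrating from $0$ to $T$ with $G(0)=0$ yields $\sqrt{A}=\sqrt{G(T)}\le\int_0^T\norm{f}_{L^2(0,t)}\,dt$. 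A further Cauchy--Schwarz (or Fubini, using $\int_0^T\norm{f}^2_{L^2(0,t)}dt=\int_0^T(T-s)\vert f(s)\vert^2ds$) then gives $A\le C\,T^2\norm{f}^2_{L^2(0,T)}$. Since $\mu$ only enters through the nonnegative term in $E$, the constant is independent of $\mu$.

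\textbf{Main obstacle.} The delicate step is matching the sharp constant $\tfrac12$ rather than merely some $C$. The crude chain above only produces $T^2$. I would try to sharpen it by keeping the weight $(T-s)$ exactly, via Fubini, instead of bounding it by $T$, and by estimating $\int_0^t f\,\partial_t\varphi$ pointwise against $\sqrt{2E(s)}$. If that still fails, I would check the normalisation used in \cite{coer}, where the estimate may be written in a differently scaled norm, and adapt the bookkeeping of the factor $\tfrac12$ in $E$ accordingly.
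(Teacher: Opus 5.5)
Your Steps 1 and 2 are correct, but Step 3 does not prove the stated inequality. It gives $\vert\varphi\vert^2_{H^1(0,T)}+\mu\norm{\varphi}^2_{L^2(0,T)}\le T^2\norm{f}^2_{L^2(0,T)}$, i.e.\ constant $1$ instead of $\tfrac12$, and the repair you try first cannot work. The intermediate bound $\sqrt{G(T)}\le\int_0^T\norm{f}_{L^2(0,t)}\,dt$ is already too weak. For $f$ concentrated near $t=0$ its right-hand side is essentially $T\norm{f}_{L^2(0,T)}$, so no later Fubini bookkeeping with the weight $(T-s)$ can recover the factor $\tfrac12$. The loss comes from estimating $E(t)=\int_0^t f\,\partial_t\varphi$ against the time-integrated quantity $\norm{\partial_t\varphi}_{L^2(0,t)}\le\sqrt{G(t)}$. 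A different normalisation in \cite{coer} is not the explanation either: the lemma holds as stated with the standard norms.

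The missing step is the one you mention only in passing: run the differential inequality on the energy itself, pointwise in time. Since $\varphi\in H^2(0,T)$, $E$ is absolutely continuous with $E'=f\,\partial_t\varphi$, and $\vert\partial_t\varphi\vert\le\sqrt{2E}$. Hence $y_\varepsilon:=\sqrt{2E+\varepsilon}$ satisfies $y_\varepsilon'\le\vert f\vert$ and $y_\varepsilon(0)=\sqrt{\varepsilon}$. Letting $\varepsilon\to0$,
\begin{equation*}
\vert\partial_t\varphi(t)\vert^2+\mu\vert\varphi(t)\vert^2=2E(t)\le\Bigl(\int_0^t\vert f(s)\vert\,ds\Bigr)^2\le t\,\norm{f}^2_{L^2(0,t)}.
\end{equation*}
Integrating over $(0,T)$ gives the bound $\norm{f}^2_{L^2(0,T)}\int_0^T t\,dt=\tfrac12T^2\norm{f}^2_{L^2(0,T)}$. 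The $\tfrac12$ is exactly the weight $t$ (rather than $T$) integrated over $(0,T)$.

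The paper gives no proof and defers to \cite{coer}, where the same pointwise bound comes from Duhamel's formula $\varphi(t)=\mu^{-1/2}\int_0^t f(s)\sin(\sqrt{\mu}(t-s))\,ds$. Applying Cauchy--Schwarz to $\partial_t\varphi(t)=\int_0^tf(s)\cos(\sqrt{\mu}(t-s))\,ds$ and to $\sqrt{\mu}\,\varphi(t)$, together with $\cos^2+\sin^2=1$, yields $\vert\partial_t\varphi(t)\vert^2+\mu\vert\varphi(t)\vert^2\le t\norm{f}^2_{L^2(0,t)}$. With the correction above, your energy route is a valid alternative that avoids the explicit solution formula.
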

Next, we discuss the numerical discretization of \eqref{odevf}. For finite dimensional subspaces $X_h\subset H^1_{0, }(0,T)$ and $Y_h\subset H^1_{ ,0}(0,T)$, the discrete variational formulation of \eqref{odevf} is given as follows : find $\varphi_h\in X_h$ such that
\begin{equation}\label{odedisvf}
	-\int_{0}^{T}\partial_t\varphi_h(t)\partial_tv_h(t)\ dt+\mu\int_{0}^{T}\varphi_h(t)v_h(t)\ dt=\langle f,v_h\rangle_{(0,T)}\quad\mbox{for all }v_h\in Y_h. 
\end{equation}
The basis functions of $X_h$ and $Y_h$ can be linear hat basis functions of classical FEM or the B-spline basis functions with maximal regularity of IgA. We can write the discrete variational formulation \eqref{odedisvf} as follows: find $\varphi_h\in X_h$ such that
\begin{equation}\label{odedisbf}
	a(\varphi_h,v_h)=\langle f,v_h\rangle_{(0,T)}\quad\mbox{for all }v_h\in Y_h,
\end{equation}
where $\displaystyle a(\varphi_h,v_h)=-\int_{0}^{T}\partial_t\varphi_h(t)\partial_tv_h(t)\ dt+\mu\int_{0}^{T}\varphi_h(t)v_h(t)\ dt$. As for the continuous case,
the variational formulation \eqref{odedisbf} is equivalent to the following variational formulation: find $\varphi_h\in X_h$ such that
\begin{equation}\label{odedisbfh}
	a(\varphi_h,\overline{\mathcal{H}}_Tv_h)=\langle f,\overline{\mathcal{H}}_Tv_h\rangle_{(0,T)}\quad\mbox{for all }v_h\in X_h.
\end{equation}
When space $X_h$ consists of B-spline basis functions of maximal regularity, the well-posedness of \eqref{odedisbfh} can be proven using the theory of Galerkin method applied to G\r{a}rding-type problems \cite{Spence2015,Moiola2021Scattering}.
 This result is stated in the following theorem.
\begin{theorem}\label{diswellprrof}\cite{sarathes}
There exists two positive constants $C$ and $h^*$, such that for $h\leq h^*$, there exists a unique solution $\varphi_h$ of equation \eqref{odedisbfh}, and it satisfies the following estimate
\begin{equation}\label{weephih}
    \vert \varphi_h\vert_{H^1(0,T)}\leq C\norm{f}_{(H^1_{ ,0}(0,T))'}.
\end{equation}
\end{theorem}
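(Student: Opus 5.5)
We follow the standard Schatz-type argument for Galerkin discretizations of problems satisfying a G\r{a}rding inequality, applied in the $\overline{\mathcal{H}}_T$-transformed setting. Write $a(\varphi,\overline{\mathcal{H}}_Tv)=b(\varphi,v)+\mu\, c(\varphi,v)$ with
\begin{equation*}
b(\varphi,v):=-\int_0^T\partial_t\varphi\,\partial_t(\overline{\mathcal{H}}_Tv)\,dt=\int_0^T\partial_t\varphi\,\partial_tv\,dt,\qquad c(\varphi,v):=\int_0^T\varphi\,(\overline{\mathcal{H}}_Tv)\,dt .
\end{equation*}
On $H^1_{0,}(0,T)$ the form $b$ is exactly the $\vert\cdot\vert_{H^1(0,T)}$ inner product, so it is coercive with constant $1$.

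The form $c$ is a compact perturbation. Indeed, $\vert c(\varphi,v)\vert\le \norm{\varphi}_{L^2(0,T)}\norm{\overline{\mathcal{H}}_Tv}_{L^2(0,T)}$, and the embedding $H^1_{0,}(0,T)\hookrightarrow L^2(0,T)$ is compact. This gives the G\r{a}rding inequality
\begin{equation*}
a(v,\overline{\mathcal{H}}_Tv)\ge \vert v\vert^2_{H^1(0,T)}-C_\mu\norm{v}^2_{L^2(0,T)} .
\end{equation*}
We combine this with injectivity of the continuous problem. Uniqueness in Theorem~\ref{wellposode} makes the operator $A:H^1_{0,}\to (H^1_{0,})'$ induced by $a(\cdot,\overline{\mathcal{H}}_T\cdot)$ an isomorphism, with stability constant $(2+\sqrt\mu T)/2$.

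The discrete inf-sup bound comes from a duality (Aubin--Nitsche) argument. For $\varphi_h\in X_h$ we will bound $\norm{\varphi_h}_{L^2}$ as follows.
\begin{enumerate}
\item Let $z\in H^1_{0,}(0,T)$ solve the adjoint problem $a(w,\overline{\mathcal{H}}_Tz)=(w,\varphi_h)_{L^2}$ for all $w$. This is again an ODE of the form \eqref{ode} run backward in time, so its well-posedness follows from the same theory.
\item Since the data $\varphi_h$ lie in $L^2$, an analogue of Lemma~\ref{odebound} gives $z\in H^2$ with $\vert z\vert_{H^2}\lesssim (1+\mu T^2)\norm{\varphi_h}_{L^2}$.
\item Then
\begin{equation*}
\norm{\varphi_h}^2_{L^2}=a(\varphi_h,\overline{\mathcal{H}}_T(z-z_h))+a(\varphi_h,\overline{\mathcal{H}}_Tz_h),
\end{equation*}
where $z_h\in X_h$ is the spline quasi-interpolant. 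It satisfies $\vert z-z_h\vert_{H^1}\lesssim h\vert z\vert_{H^2}$, and maximal-regularity B-splines preserve the condition at $t=0$.
\item Combining these, $\norm{\varphi_h}_{L^2}\lesssim h\vert\varphi_h\vert_{H^1}+\sup_{v_h}a(\varphi_h,\overline{\mathcal{H}}_Tv_h)/\vert v_h\vert_{H^1}$.
\end{enumerate}
Inserting this bound into the G\r{a}rding inequality (with $v=v_h=\varphi_h$) and choosing $h^*$ so that $C_\mu C h^*$ is small enough to absorb the $h\vert\varphi_h\vert_{H^1}$ term gives a discrete inf-sup condition $\sup_{v_h}a(\varphi_h,\overline{\mathcal{H}}_Tv_h)/\vert v_h\vert_{H^1}\ge \gamma\vert\varphi_h\vert_{H^1}$ with $\gamma>0$ independent of $h\le h^*$.

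Since $X_h$ is finite dimensional and the trial and test dimensions agree, the inf-sup condition gives unique solvability. Estimate \eqref{weephih} then follows from
\begin{equation*}
\langle f,\overline{\mathcal{H}}_Tv_h\rangle\le\norm{f}_{(H^1_{,0})'}\vert v_h\vert_{H^1},
\end{equation*}
using that $\overline{\mathcal{H}}_T$ is an isometry, with $C=1/\gamma$.

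The main obstacle is the duality step. We must verify the $H^2$ regularity of the adjoint solution and show that the spline approximation $z_h$ respects the homogeneous condition at $t=0$ with the optimal rate in $\vert\cdot\vert_{H^1}$. The constants $C$ and $h^*$ will depend on $\mu$ and $T$ (roughly $h^*\sim(1+\mu T^2)^{-1}\mu^{-1}$), which is consistent with the statement, since the statement does not claim uniformity in $\mu$.
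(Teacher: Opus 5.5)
Your argument is essentially the route the paper relies on. The paper gives no proof of its own for this theorem; it defers to Fraschini's thesis and to the Galerkin theory for G\r{a}rding-type problems (Spence, Moiola), which is the Schatz argument you set up: a G\r{a}rding inequality, a backward-in-time adjoint ODE giving $H^2$ regularity, and spline approximation in $X_h$ giving the factor $h$ to absorb.

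One statement needs fixing. Your G\r{a}rding inequality cannot hold with coefficient $1$ in front of $\vert v\vert^2_{H^1(0,T)}$. The reason is that $\mu c(v,v)=\mu v(T)\int_0^T v\,dt-\mu\norm{v}^2_{L^2(0,T)}$ contains $v(T)$, which $\norm{v}_{L^2(0,T)}$ does not control. For the same reason, $\norm{\overline{\mathcal{H}}_Tv}_{L^2(0,T)}$ is not bounded by $\norm{v}_{L^2(0,T)}$, so compactness alone does not yield the inequality. Using $\vert v(T)\vert\le\sqrt{T}\,\vert v\vert_{H^1(0,T)}$ and Young's inequality instead gives $a(v,\overline{\mathcal{H}}_Tv)\ge\frac12\vert v\vert^2_{H^1(0,T)}-\bigl(\mu+\frac{\mu^2T^2}{2}\bigr)\norm{v}^2_{L^2(0,T)}$. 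This is all your absorption step needs, and the rest of the argument goes through unchanged.
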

\noindent The bound $h^*$ on the mesh-size $h$ and the stability constant $C$ in \eqref{weephih} depend on the final time $T$ and $\mu$. For the explicit values of $h^*$ and $C$, we refer to \cite[Theorem 3.2.13]{sarathes}.\\
\par Note that, for the case when finite element space $X_h$ consists of linear continuous basis functions (as in classical FEM), the well-posedness of \eqref{odedisbfh} can be established using compact perturbation argument under the following mesh condition \cite[Theorem 4.7]{coer}.
\begin{equation}\label{femhb}
h \leq \frac{2\sqrt{3}}{(2+\sqrt{\mu}T)\mu T}.
\end{equation}
Moreover, the discrete variational formulation \eqref{odedisbf} with space $X_h$ having linear hat basis functions can be viewed as a two-step method in the case of uniform mesh. 
Consequently, the analysis of the root-condition for the stability of this two-step method yields the following mesh restriction instead of \eqref{femhb} (for more details, see \cite[Remark 4.9]{coer}):

\begin{equation}\label{FEModeCFL}
h<\sqrt{\frac{12}{\mu}}.
\end{equation}
The stability analysis of the discrete variational formulation \eqref{odedisbf} for space $X_h$ having splines of maximal regularity as basis functions  with uniform mesh has been investigated in \cite{Ferrari2025}. This analysis was carried out by examining the properties of a family of matrices that arise from discrete variational formulation \eqref{odedisbf}. These matrices have a special structure known as Toeplitz band form, and the analysis was focused on the condition number of matrices. Specifically, the family of matrices is said to be weakly well-conditioned if the condition number of matrices associated to \eqref{odedisbf} does not grow exponentially as the system size increases. Under this framework, \cite[Theorem 5.10]{Ferrari2025} establishes that the family of matrices associated with the discrete variational formulation \eqref{odedisbf} is weakly well-conditioned if and only if the following condition on the mesh size is satisfied:

\begin{equation}\label{IgAodeCFL}
h<\sqrt{\frac{\rho_p}{\mu}},
\end{equation} where the values $\rho_p$ are explicitly obtained in \cite{Ferrari2025}. Such condition on mesh size leads to the Courant–Friedrichs–Lewy (CFL) condition of the type $h_t<Ch_s$ when analyzing the stability of the discrete space-time variational formulation of the second-order wave equation. Here, $h_t$ and $h_s$ are the mesh sizes in the temporal and the spatial direction respectively and $C>0$ is a constant depending on the spatial domain and the polynomial degree $p$.
\par To overcome the mesh condition \eqref{FEModeCFL} in FEM discretization of \eqref{odevf} with linear hat basis functions, authors in \cite{Steinbach2019} proposed the following stabilized bilinear form:
\begin{equation}\label{perbf}
	a_h(\varphi_h,v_h)=a(\varphi_h,v_h)-\frac{\mu h^2}{12}(\partial_t\varphi_h,\partial_tv_h)_{L^2(0,T)}.
\end{equation} 
This stabilization was further extended to higher order finite element basis function in \cite{Zank2021}. In a similar spirit, the authors in \cite{Fraschini2024} proposed a stable space-time isogeometric discretization of \eqref{odevf} with high degree B-splines having maximal regularity by considering the following stabilized bilinear form 
\begin{equation}\label{igaodest}
	a_h(\varphi_h,v_h)=a(\varphi_h,v_h)-\mu\delta h^{2p}(\partial_t^p\varphi_h,\partial_t^pv_h)_{L^2(0,T)},
\end{equation}
for a parameter $\delta\geq 0$. The stability of the formulation \eqref{igaodest} was proven in \cite[Theorem 5.10]{Ferrari2025} by showing that the family of Toeplitz band matrices associated with the bilinear form \eqref{igaodest} is weakly well-conditioned if and only if $\delta\geq\delta_p$ and the optimal values of the penalty term $\delta_p$ are also presented in \cite{Ferrari2025}.\\

 \section{{Well-posedness} of space-time {variational} formulation}\label{sec:stvf continuous}
In this section, we analyze the space-time variational formulation of \eqref{eqbiwave} which is given as: find $u\in X$ such that
\begin{equation}\label{wavevf}
	-\int_{0}^{T}\int_{\Omega}\partial_tu\partial_tv\ d\Omega dt+\int_{0}^{T}\int_{\Omega}\Delta u\Delta v\ d\Omega dt=\int_{0}^{T}\int_{\Omega}fv\ d\Omega dt\quad\mbox{for all }v\in Y.
\end{equation}
Following the similar arguments of \cite[Theorem 5.1]{coer}, in the next theorem we prove the unique solvability of \eqref{wavevf} when assuming $f\in L^2(L^2(\Omega))$.
\begin{theorem} For $f\in L^2(L^2(\Omega))$, there exists a unique $u\in\tv$ which satisfies \eqref{wavevf} and the following estimate holds
	\begin{equation*}
		\norm{u}_{\tv}\leq \frac{1}{\sqrt{2}}T\norm{f}_{L^2(L^2(\Omega))}.
	\end{equation*}
\end{theorem}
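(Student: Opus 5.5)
We follow the strategy of \cite[Theorem 5.1]{coer} and reduce \eqref{wavevf} to a family of decoupled ODEs of the form \eqref{odevf} by an eigenfunction expansion in space. Consider the eigenvalue problem for the biharmonic operator with clamped boundary conditions: find $(\mu_i,\phi_i)\in\mathbb{R}\times H^2_0(\Omega)$ with
\begin{equation*}
\int_\Omega \Delta\phi_i\,\Delta w\ d\Omega=\mu_i\int_\Omega\phi_i w\ d\Omega\quad\mbox{for all }w\in H^2_0(\Omega).
\end{equation*}
Since $H^2_0(\Omega)$ is compactly embedded in $L^2(\Omega)$ and $(\Delta\cdot,\Delta\cdot)_{L^2(\Omega)}$ is symmetric and coercive on $H^2_0(\Omega)$, spectral theory for compact self-adjoint operators gives eigenvalues $0<\mu_1\le\mu_2\le\dots\to\infty$. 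The eigenfunctions $\{\phi_i\}$ form an $L^2(\Omega)$-orthonormal basis and are orthogonal in $H^2_0(\Omega)$, with $\norm{\Delta\phi_i}^2_{L^2(\Omega)}=\mu_i$.

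Expand $f(\x,t)=\sum_i f_i(t)\phi_i(\x)$ with $f_i=(f(\cdot,t),\phi_i)_{L^2(\Omega)}\in L^2(0,T)$, so that $\sum_i\norm{f_i}^2_{L^2(0,T)}=\norm{f}^2_{L^2(L^2(\Omega))}$ by Parseval. Make the ansatz $u=\sum_i u_i(t)\phi_i(\x)$, where $u_i\in H^1_{0,}(0,T)$ solves \eqref{odevf} with $\mu=\mu_i$ and right-hand side $f_i$. By Theorem \ref{wellposode} each $u_i$ exists and is unique, and by Lemma \ref{odebound}
\begin{equation*}
\vert u_i\vert^2_{H^1(0,T)}+\mu_i\norm{u_i}^2_{L^2(0,T)}\le\tfrac12T^2\norm{f_i}^2_{L^2(0,T)}.
\end{equation*}
Summing over $i$ and using orthogonality,
\begin{equation*}
\norm{\partial_t u}^2_{L^2(L^2)}+\norm{\Delta u}^2_{L^2(L^2)}=\sum_i\left(\vert u_i\vert^2_{H^1}+\mu_i\norm{u_i}^2_{L^2}\right)\le\tfrac12T^2\norm{f}^2_{L^2(L^2)}.
\end{equation*}
Hence the partial sums $u^N=\sum_{i\le N}u_i\phi_i$ form a Cauchy sequence in $X$; applying the same bound to tails shows this. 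The limit $u\in X$ satisfies $\norm{u}_X\le\frac{1}{\sqrt2}T\norm{f}_{L^2(L^2(\Omega))}$, and $u(\cdot,0)=0$ is preserved in the limit.

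Next we verify that $u$ solves \eqref{wavevf}. For test functions $v=w(t)\phi_j(\x)$ with $w\in H^1_{,0}(0,T)$, orthogonality reduces \eqref{wavevf} exactly to the $j$-th ODE variational problem, which $u_j$ satisfies. Finite linear combinations of such products are dense in $Y$, which one sees by expanding $v\in Y$ in $\{\phi_j\}$ and truncating: the coefficients lie in $H^1_{,0}(0,T)$ and the truncation converges in the $Y$-norm by the same orthogonality. Since the bilinear form is continuous on $X\times Y$ and the right-hand side is continuous on $Y$, \eqref{wavevf} holds for all $v\in Y$.

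Uniqueness follows by taking $f=0$. Any solution $u\in X$ has coefficients $u_i(t)=(u(\cdot,t),\phi_i)$ in $H^1_{0,}(0,T)$, and testing with $w\phi_i$ shows that each $u_i$ solves \eqref{odevf} with zero data. By uniqueness in Theorem \ref{wellposode}, $u_i=0$ for all $i$, hence $u=0$. The main obstacle is the functional-analytic justification rather than the estimate itself. One must check that the eigenfunction coefficient map sends $X$ and $Y$ to the right ODE spaces with the Parseval identities in the $X$-norm. This uses $\norm{\Delta v}^2=\sum_i\mu_i\vert v_i\vert^2$ for $v\in H^2_0(\Omega)$, which follows from the $H^2_0$-orthogonality and completeness of $\{\phi_i/\sqrt{\mu_i}\}$ in $H^2_0(\Omega)$. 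One must also check the density claim in $Y$. Both are settled by this completeness in $H^2_0(\Omega)$ combined with the $L^2(\Omega)$-completeness.
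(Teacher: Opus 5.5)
Your proposal is correct and follows the same route as the paper: expand in the clamped biharmonic eigenfunctions, reduce \eqref{wavevf} to the decoupled ODEs \eqref{eigode}, apply Theorem \ref{wellposode} to each mode, and sum the bounds of Lemma \ref{odebound} using Parseval. Your extra steps (convergence of the partial sums in $X$, density of separable test functions in $Y$, and the coefficient-wise uniqueness argument) make rigorous what the paper only asserts with ``from which we conclude''.
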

\begin{proof}
	First, we consider the following spatial eigenvalue problem
	\begin{equation}\label{eig}
		\begin{split}
			\Delta^2\psi&=\lambda\psi\quad\mbox{in }\Omega,\\
			\psi=\frac{\partial\psi}{\partial n}&=0\quad\mbox{on }\partial\Omega,
		\end{split}
	\end{equation}
with $\norm{\psi}_{L^2(L^2(\Omega)}=1$. Problem \eqref{eig} admits an increasing sequence of eigenvalues $\{\lambda_n\}_{n=1}^{\infty}$ with $0<\lambda_1\leq\lambda_2\cdots\leq\lambda_n\rightarrow\infty$ as $n\rightarrow\infty$ and the corresponding family of eigen functions $\{\psi_n\}_{n=1}^{\infty}$ forms an orthonormal basis of $L^2(\Omega)$ and an orthogonal basis of $H_0^2(\Omega)$. Also, let $v_k, k=0,1,2,\ldots$ be the eigen functions of the eigenvalue problem 
	\begin{equation*}
		\begin{split}
			-\partial_{tt}\phi(t)&=\lambda\phi(t)\quad\mbox{in }(0,T)\\	
			\phi(0)=\partial_t\phi(T)&=0.
		\end{split}
	\end{equation*}
	Then, ${v_k}$ forms an orthogonal basis of $L^2(0,T)$. For $\varphi\in H^1_{0,}(0,T)$, we can write
	\begin{equation*}
		\varphi(t)=\sum_{k=0}^{\infty}\varphi_kv_k(t),
	\end{equation*}
	where $\displaystyle \varphi_k=\frac{2}{T}\int_{0}^{T}\varphi(t)v_k(t)\ dt$. Then, the Fourier series representation of a function $u\in L^2(L^2(\Omega))$ is given as
	\begin{equation*}
		\begin{split}
			u(\x,t)=&\sum_{i=1}^{\infty}\sum_{k=0}^{\infty}u_{i,k}v_k(t)\psi_i(\x)\\
			&=\sum_{i=1}^{\infty}U_i(t)\psi_i(\x),
		\end{split}
	\end{equation*}
	where $U_i(t)=\displaystyle\sum_{k=0}^{\infty}u_{i,k}v_k(t)$ with the coefficients $u_{i,k}=\displaystyle \frac{2}{T}\int_{0}^{T}\int_{\Omega}u(\x,t)v_k(t)\psi_i(\x)\ d\Omega dt$.
	For fix $j\in\mathbb{N}$, take $v(\x,t)=V(t)\psi_j(\x)\in\tw$ as a test function with $V\in H^1_{,0}(0,T)$. The variational formulation \eqref{wavevf} becomes
	\begin{equation*}
		\begin{split}
			-\int_{0}^{T}\int_{\Omega}\sum_{i=1}^{\infty}\partial_tU_i(t)\psi_i(\x)\partial_tV(t)\psi_j(\x)\ d\Omega dt&+\int_{0}^{T}\int_{\Omega}\sum_{i=1}^{\infty}U_i(t)\Delta \psi_i(\x)V(t)\Delta\psi_j(\x)\ d\Omega dt\\
			&=\int_{0}^{T}\int_{\Omega}f(\x,t)V(t)\psi_j(\x)\ d\Omega dt
		\end{split}
	\end{equation*}
	which implies
	\begin{equation}\label{eigode}
		-\int_{0}^{T}\partial_tU_j(t)\partial_tV(t)\ dt+\lambda_j\int_{0}^{T}U_j(t)V(t)\ dt=\int_{0}^{T}f_j(t)V(t)\ dt,
	\end{equation} where $f_j=\int_{\Omega}f\psi_j\ d\Omega$. We note that \eqref{eigode} is of the form \eqref{odevf} and, as established in Section \ref{sec:pre} has an equivalent formulation of the form \eqref{odevfh}. Theorem \ref{wellposode} therefore guarantees the existence of a unique solution $U_j$ to \eqref{eigode} for each $j$, from which we conclude that \eqref{wavevf} also has a unique solution $u$.\\
	Next, to prove the a priori estimate, note that for $f\in L^2(L^2(\Omega)),$ we have
	\begin{equation*}
		f(\x,t)=\sum_{j=1}^{\infty}f_j(t)\psi_j(\x)
	\end{equation*}
	and $\displaystyle\norm{f}_{L^2(L^2(\Omega))}^2$=$\displaystyle\sum_{j=1}^{\infty}\norm{f_j(t)}_{L^2(0,T)}^2$.
	Then using the bound given in Lemma \ref{odebound}, we have
	\begin{align*}
	    \norm{u}_{\tv}^2&=\int_{0}^{T}\int_{\Omega}\left(\vert\partial_tu(\x,t)\vert^2+\vert\Delta u(\x,t)\vert^2\right)\ d\Omega dt\\
			&=\sum_{i=1 }^{\infty}\sum_{j=1}^{\infty}\int_{0}^{T}\partial_t U_i(t)\partial_t U_j(t)\ dt\int_{\Omega}\psi_i(\x)\psi_j(\x)\ d\Omega\\
			&+\int_{0}^{T}U_i(t)U_j(t)\ dt\int_{\Omega}\Delta\psi_i(\x)\Delta\psi_j(\x)\ d\Omega\\
			&=\sum_{i=1}^{\infty}\left(\int\vert\partial_tU_i(t)\vert^2+\lambda_i\vert U_i(t)\vert^2\ dt\right)\\
			&=\sum_{i=1}^{\infty}\left(\norm{U_i}_{H^1_{0,1}(0,T)}^2+\lambda_i\norm{U_i}_{L^2(0,T)}^2\right)\\
			&\leq \frac{1}{2}T^2\sum_{i=1}^{\infty}\norm{f_i}^2_{L^2(0,T)}=\frac{1}{2}T^2\norm{f}^2_{L^2(L^2(\Omega))}
	\end{align*}
	which is the required estimate.
\end{proof}

 \section{Space-time isogeometric discretization and stability analysis}\label{sec4:stiga}
We discuss the space-time discretization of the variational formulation \eqref{wavevf}. First, we give an overview of the isogeometric spaces. For more details on splines and their properties, we refer to \cite{nurbsbook}. \\
We start by defining the univariate B-splines. Given positive integer $m\in\mathbb{N}$ and $q>0$ consider a knot vector $\Xi=\{0=\eta_1\leq\eta_2\leq\cdots\leq\eta_{m+p+1}=1\}$. We assume that $\Xi$ is an open knot vector, i.e., $\eta_1=\cdots=\eta_p$ and $\eta_{m+1}=\cdots=\eta_{m+p+1}$. By the Cox-de-Boor recursion formula, the univariate B-spline functions are defined as follows:\\
For $p=0$,
\begin{equation*}
	B^0_j(\zeta)=\begin{cases}
		1,\quad\mbox{if }\zeta\in[\eta_j,\eta_{j+1})\\
		0,\quad\mbox{otherwise. }
	\end{cases}
\end{equation*}
For $p\geq1$,
\begin{equation*}
	B^p_j(\zeta)=\frac{\zeta-\eta_j}{\eta_{j+p}-\eta_j}B^{p-1}_j(\zeta)+\frac{\eta_{j+p+1}-\zeta}{\eta_{j+p+1}-\eta_{j+1}}B^{p-1}_{j+1}(\zeta),
\end{equation*}
where by convention $0/0=0$. Then the univariate spline space is given by 
\begin{equation*}
	S^p_h(\Xi)=\mbox{span}\{B^p_j:j=1,\ldots,m\},
\end{equation*}
where $h=\mbox{max}\{|\eta_j-\eta_{j-1}\vert:j=2,\ldots,m+p+1\}$ denotes the mesh size.
The $p^{\text{th}}$ degree B-spline function is $C^{p-l}$ time regular at a knot having multiplicity $l$ and $C^{\infty}$ regular elsewhere. By taking the tensor product of the univariate B-splines, we can define the multivariate B-splines. Given positive integers $m_1,\ldots,m_d$, $m_t$ and $p_1,\ldots,p_d,\ p_t\in\mathbb{N}\cup\{0\}$, consider $d+1$ open knot-vectors $\Xi_l=\{\eta_{l,1}\leq\eta_{l,2}\leq\cdots\leq\eta_{l,m_l+p_l+1}\}$ for $l=1,\ldots,d$ and $\Xi_t=\{\eta_{t,1}\leq\eta_{t,2}\leq\cdots\leq\eta_{t,m_t+p_t+1}\}$. By $h_l$, we denote the mesh size for the knot vector $\Xi_l$ for $1\leq l\leq d$. Similarly, $h_t$ denotes the mesh size for the knot vector $\Xi_t$. We set $h_s=\mbox{max}\{h_l:l=1,\ldots,d\}$. The degree knot vector is defined as $\mathbf{p}=(\mathbf{p}_s,p_t)$, where $\mathbf{p}_s=(p_1,\ldots,p_d)$. For simplicity, in our analysis we take $p_1=\cdots=p_d=p_s$. The multivariate B-splines on $\widehat{\Omega}\times(0,1)=(0,1)^d\times(0,1)$ are defined as 
\begin{equation*}
	B^{\mathbf{p}}_{\mathbf{j}}(\mathbf{\zeta},\tau)=B^{\mathbf{p}_s}_{\mathbf{j}_s}(\mathbf{\zeta})B^{p_t}_{j_t}(\tau),
\end{equation*} 
where
\begin{equation*}
	B^{\mathbf{p}_s}_{\mathbf{j}_s}(\mathbf{\zeta})=B^{p_1}_{j_1}(\zeta_1)\cdots B^{p_d}_{j_d}(\zeta_d),
\end{equation*}
$\mathbf{\zeta}=(\zeta_1,\ldots,\zeta_d)$, $\mathbf{j}=(\mathbf{j}_s,j_t)$, $\mathbf{j}_s=(j_1,\ldots,j_d)$. The corresponding spline space is given as
\begin{equation*}
	S^{\mathbf{p}}_h=\mbox{span}\{B^{\mathbf{p}}_{\mathbf{j}}: 1\leq j_l\leq m_l, l=1,\ldots,d, 1\leq j_t\leq m_t\},
\end{equation*}
where $h=\mbox{max}\{h_s,h_t\}$. Due to the tensor product structure, we have $S^{\mathbf{p}}_h=S^{\mathbf{p}_s}_{h_s}\otimes S^{p_t}_{h_t}$, where
\begin{equation*}
	S^{\mathbf{p}_s}_{h_s}=\mbox{span}\{B^{\mathbf{p}_s}_{\mathbf{j}_s}:1\leq j_l\leq m_l,l=1,\ldots,d\}
\end{equation*}
and 
\begin{equation*}
	S^{p_t}_{h_t}=\mbox{span}\{B^{p_t}_{j_t}:1\leq j_t\leq m_t\}.
\end{equation*}
\begin{ass}
	We have $p_s\geq 2$, $p_t\geq 1$ and $S^{\mathbf{p}_s}_{h_s}\subset C^{p_s-1}(\widehat{\Omega})$, $S^{p_t}_{h_t}\subset C^{p_t-1}(0,1)$.
\end{ass}
Next, we define the isogeometric spaces. Consider a parameterization map $\mathbf{F}:\widehat{\Omega}\rightarrow\Omega$ with $\mathbf{F}\in \left(S^{\mathbf{p}_s}_{h_s}\right)^d$.
\begin{ass}
	We assume that $\mathbf{F}^{-1}$ exists and has piecewise bounded derivatives of all order.
\end{ass}
The space-time cylinder $\Omega\times(0,T)$ is then parameterized using the map $\widetilde{\mathbf{F}}:\widehat{\Omega}\times(0,1)\rightarrow\Omega\times(0,T)$, $\widetilde{\mathbf{F}}(\mathbf{\zeta},\tau)=(\mathbf{F}(\mathbf{\zeta}),T\tau)$. To incorporate the initial and boundary conditions, we consider
\begin{equation*}
	\widehat{X}_h=\{\widehat{w}_h\in S^{\mathbf{p}}_h:\widehat{w}_{h}=\frac{\partial\widehat{w}_h}{\partial n}=0\mbox{ on }\partial\widehat{\Omega}\times(0,1)\mbox{ and }\widehat{w}_h(\mathbf{\zeta},0)=0\}
\end{equation*}
and 
\begin{equation*}
	\widehat{Y}_h=\{\widehat{w}_h\in S^{\mathbf{p}}_h:\widehat{w}_{h}=\frac{\partial\widehat{w}_h}{\partial n}=0\mbox{ on }\partial\widehat{\Omega}\times(0,1)\mbox{ and }\widehat{w}_h(\mathbf{\zeta},1)=0\}.
\end{equation*}
Note that,
\begin{equation*}
	\widehat{X}_h=S^{\mathbf{p}_s}_{h_s,0}\otimes S^{p_t}_{h_t;0,}
\end{equation*}
and
\begin{equation*}
	\widehat{Y}_h=S^{\mathbf{p}_s}_{h_s,0}\otimes S^{p_t}_{h_t;,0},
\end{equation*}
where
\begin{equation*}
	S^{\mathbf{p}_s}_{h_s,0}=\{\widehat{w}_{h_s}\in S^{\mathbf{p}_s}_{h_s}: \widehat{w}_{h_s}=\frac{\partial\widehat{w}_{h_s}}{\partial n}=0\mbox{ on }\partial\widehat{\Omega}\times(0,1)\}
\end{equation*}
\begin{equation*}
	S^{p_t}_{h_t;0,}=\{\widehat{w}_{h_t}\in S^{p_t}_{h_t}: \widehat{w}_{h_t}(0)=0\}
\end{equation*}
and
\begin{equation*}
	S^{p_t}_{h_t;,0}=\{\widehat{w}_{h_t}\in S^{p_t}_{h_t}: \widehat{w}_{h_t}(1)=0\}.
\end{equation*}
Here the dimensions of the spaces $\widehat{X}_h$ and $\widehat{Y}_h$ are same, i.e., $\mbox{dim}(\widehat{X}_h)=\mbox{dim}\widehat{Y}_h={N}_{dof}$, where ${N}_{dof}=\mathbf{n}_sn_t$ with $\mathbf{n}_s=(m_1-4)(m_2-4)\cdots(m_d-4)$ and $n_t=m_t-1$.\\
Finally, the isogeometric spaces are defined as
\begin{equation}\label{xhdef}
	X_h=\{w_h=\widehat{w}_h\circ\widetilde{\mathbf{F}}^{-1}:\widehat{w}_h\in\widehat{X}_h\}
\end{equation}
and
\begin{equation}\label{yhdef}
	Y_h=\{w_h=\widehat{w}_h\circ\widetilde{\mathbf{F}}^{-1}:\widehat{w}_h\in\widehat{Y}_h\}.
\end{equation}
We have
\begin{equation*}
	X_h=W_{s,h_s}\otimes X_{t,h_t}
\end{equation*}
and
\begin{equation*}
	Y_h=W_{s,h_s}\otimes Y_{t,h_t},
\end{equation*}
where
\begin{equation*}
	W_{s,h_s}=\{w_{h_s}=\widehat{w}_{h_s}\circ\mathbf{F}^{-1}:\widehat{w}_{h_s}\in S^{\mathbf{p}_s}_{h_s,0}\},
\end{equation*}
\begin{equation*}
	X_{t,h_t}=\{w_{h_t}=\widehat{w}_{h_t}(\cdot/T):\widehat{w}_{h_t}\in S^{p_t}_{h_t;0,}\}
\end{equation*}
and
\begin{equation*}
	Y_{t,h_t}=\{w_{h_t}=\widehat{w}_{h_t}(\cdot/T):\widehat{w}_{h_t}\in S^{p_t}_{h_t;,0}\}.
\end{equation*}
Using the discrete test and trial spaces as defined in \eqref{xhdef} and \eqref{yhdef} respectively, the space-time isogeometric discretization of \eqref{wavevf} is given as: find $u_h\in X_h$ such that
\begin{equation}\label{disvf}
	-\int_{0}^{T}\int_{\Omega}\partial_tu_h\partial_tv_h\ d\Omega dt+\int_{0}^{T}\int_{\Omega}\Delta u_h\Delta v_h\ d\Omega dt=\int_{0}^{T}\int_{\Omega} fv_h\ d\Omega dt,\mbox{ for all }v_h\in Y_h.
\end{equation}

Since we have tensor product structure of the test and trial spaces $X_h$ and $Y_h$, we can write
\begin{equation*}
	u_h(\x,t)=\sum_{i=1}^{\mathbf{n}_s}\sum_{j=1}^{n_t}u_{i,j} \phi_{s,i}(\x)\psi_{t,j}(t)=\sum_{i=1}^{\mathbf{n}_s}U_{t,i}(t)\phi_{s,i}(\x),\quad U_{t,i}(t)=\sum_{j=1}^{n_t}u_{i,j}\psi_{t,j}(t),
\end{equation*}
where $\phi_{s,i}$ are the basis functions of $W_{s,h_s}$ and $\psi_{t,j}$ are the basis functions of $X_{t,h_t}$.
To discuss the well-posedness of the variational formulation \eqref{disvf}, we first obtain a semi-discrete formulation as done in \cite{coer}.\\
Consider 
\begin{equation*}
	\widetilde{u}(\x,t)=\sum_{i=1}^{\mathbf{n}_s}\widetilde{U}_{i}(t)\phi_{s,i}(\x).
\end{equation*}
We can write the semi-discrete formulation as
\begin{equation*}
	\mathbf{M}_{s}\partial_{tt}\overline{\mathbf{U}}(t)+\mathbf{N}_s\overline{\mathbf{U}}(t)=\overline{\mathbf{f}}(t)\mbox{ for }t\in(0,T),\quad \overline{\mathbf{U}}(0)=\partial_t\overline{\mathbf{U}}(0)=\overline{0},
\end{equation*}
where $\overline{\mathbf{U}}=[\widetilde{U}_1,\ldots,\widetilde{U}_{{\mathbf{n}}_s}]'$, $\mathbf{M}_s$ and $\mathbf{N}_s$ are respectively the spatial mass matrix and stiffness matrix given by
\begin{equation*}
	[\mathbf{M}_s]_{j,k}=\int_{\Omega}\phi_{s,k}(\x)\phi_{s,j}(\x)\ d\Omega,
\end{equation*}
\begin{equation*}
	[\mathbf{N}_s]_{j,k}=\int_{\Omega}\Delta \phi_{s,k}(\x)\Delta \phi_{s,j}(\x)\ d\Omega,
\end{equation*}
and the load vector
\begin{equation*}
	[\mathbf{f}]_j=\int_{\Omega}f(\x,t)\phi_{s,j}(\x)\ d\Omega.
\end{equation*}
Using the $LL'$ factorization, we have
\begin{equation*}
	\mathbf{M}_s=\mathbf{L}_s\mathbf{L}_s',\ \mathbf{A}_s=\mathbf{L}_s^{-1}\mathbf{N}_s\mathbf{L}_s'^{-1},\ \overline{\mathbf{W}}=\mathbf{L}_s'\overline{\mathbf{U}},\ \mathbf{g}(t)=\mathbf{L}_s^{-1}\mathbf{f}(t).
\end{equation*}
Then
\begin{equation*}
	\partial_{tt}\overline{\mathbf{W}}(t)+\mathbf{A}_s\overline{\mathbf{W}}(t)=\mathbf{g}(t)\mbox{ for }t\in(0,T),\ \overline{\mathbf{W}}(0)=\partial_t\overline{\mathbf{W}}(0)=\overline{0}.
\end{equation*}
The matrix $\mathbf{A}_s$ is positive-definite and symmetric, hence it has a diagonal representation
\begin{equation*}
	\mathbf{A}_s=\mathbf{V}_s\mathbf{D}_s\mathbf{V}_s',\quad\mathbf{D}_s=\mbox{diag}(\lambda_i(\mathbf{A}_s))_{i=1}^{\mathbf{n}_s},
\end{equation*} 
where $\mathbf{V}_s=({\mathbf{v}}_1,\ldots,{\mathbf{v}}_{\mathbf{n}_s})$ and $\mathbf{A}_s\mathbf{v}_i=\lambda_i(\mathbf{A}_s)\mathbf{v}_i$. Using $\overline{\mathbf{Z}}(t)=\mathbf{V}_s'\overline{\mathbf{W}}(t)$, we get the following $\mathbf{n}_s$ scalar ODEs:
\begin{equation}\label{sysode}
	\partial_{tt}\overline{\mathbf{Z}}(t)+\mathbf{D}_s\overline{\mathbf{Z}}(t)=\widetilde{\mathbf{g}}(t)\mbox{ for }t\in(0,T),\ \overline{\mathbf{Z}}(0)=\partial_t\overline{\mathbf{Z}}(0)=\overline{0},
\end{equation}
where $\widetilde{\mathbf{g}}(t)=\mathbf{V}_s'\overline{\mathbf{g}}(t)$.
The discrete formulation related with \eqref{sysode} is given as, find $z_{h_t,i}\in X_{t,h_t}$ such that
\begin{equation}\label{semi-dis}
	-\int_{0}^{T}\partial_tz_{h_t,i}\partial_tv_{h_t}\ dt+\lambda_i(\mathbf{A}_s)\int_{0}^{T}z_{h_t,i}v_{h_t}\ dt=\int_{0}^{T}\overline{\widetilde{\mathbf{g}}}_iv_{h_t}\ dt\quad\mbox{for all }v_{h_t}\in X_{t,h_t}.
\end{equation}
Note that $\overline{\mathbf{Z}}_h(t)=\mathbf{V}_s'\mathbf{L}_s'\mathbf{U}_s(t)$ with $\mathbf{U}_s(t)=(U_{s,1}(t),\ldots,U_{s,\mathbf{n}_s}(t))$. The vector $\mathbf{U}_s$ is the vector of the unknown functions of the approximation function $u_h$. The variational formulation \eqref{semi-dis} has the form \eqref{odedisvf}. Hence, the stability and the well-posedness of the solutions $z_{h_t,i}$ under sufficiently small mesh size $h_t$, follows from Theorem \ref{diswellprrof}. However, as discussed in the Section \ref{sec:pre}, for uniform mesh refinement, the stability of the discretization \eqref{semi-dis} is achieved if and only if the mesh condition \eqref{IgAodeCFL} is satisfied. This condition turns out to be a CFL condition for discrete variational formulation \eqref{disvf}, which we discuss in the following subsection. 
\subsection{Stability analysis} We recall from Section \ref{sec:pre} that the discretization \eqref{semi-dis} is stable if and only if 
\begin{equation}\label{CFLodefourth}
    h_t<\sqrt{\frac{\rho_{p_t}}{\lambda_i}},
\end{equation} where the values of $\rho_{p_t}$ for different polynomial degree $p_t$ are displayed in Table \ref{rhop} \cite{Ferrari2025}.
\begin{table}
	\centering
	\begin{tabular}{ |c|c|c|c|c|c|c| } 
		\hline
		$p_t$ & 1 & 2 &3 &4 & 5 & 6 \\
		\hline 
		$\rho_{p_t}$ &\ \ 12 \ \  &\ \ 10  \ \ &168/17&306/31&2349/238&7797/790  \\ 
		\hline
	\end{tabular}
	\caption{Values of $\rho_{p_t}$}
	\label{rhop}
\end{table}
Here,  $\lambda_i$, $1\leq i\leq \mathbf{n}_s$ denote the eigen values of the matrix $\mathbf{A}_s$ associated with the bi-Laplacian operator. These eigenvalues are given by:
\begin{equation*}
	\lambda_i=\frac{(\mathbf{A}_s\mathbf{v}_i,\mathbf{v}_i)}{(\mathbf{v}_i,\mathbf{v}_i)}=\frac{(\mathbf{N}_s(\mathbf{u}_i),\mathbf{u}_i)}{(\mathbf{u}_i,\mathbf{u}_i)}=\frac{\norm{\Delta u_{s,i}}_{L^2(\Omega)}^2}{\norm{u_{s,i}}_{L^2(\Omega)}^2},
\end{equation*}
where $\mathbf{u}_i=\mathbf{L}_s'^{-1}\mathbf{v}_i$ and $u_{s,i}\in W_{s,h_s}$ are related functions. Then from \eqref{CFLodefourth}, we have the stability of \eqref{semi-dis} if and only if
\begin{equation}\label{CFL1}
    \lambda_i=\frac{\norm{\Delta u_{s,i}}_{L^2(\Omega)}^2}{\norm{u_{s,i}}_{L^2(\Omega)}^2}< \frac{\rho_{p_t}}{h_t^2}.
\end{equation}
Using the inverse inequality for the spline functions \cite{invineq,invineq2}, we get
\begin{equation}\label{invinq}
	\norm{\Delta v_{h_s}}^2_{L^2(\Omega)}\leq C_{\Omega}h_s^{-4}\norm{v_{h_s}}_{L^2(\Omega)}^2\mbox{ for all }v_{h_s}\in W_{s,h_s},
\end{equation}
where $C_{\Omega}$ is a constant depending on the spatial domain $\Omega$. In view of the inverse inequality \eqref{invinq}, condition \eqref{CFL1} holds provided that
\begin{equation*}
	C_{\Omega}h_s^{-4}<\rho_{p_t}h_t^{-2}.
\end{equation*}
This gives us
\begin{equation}\label{cfl}
	h_t<h_s^2\sqrt{\frac{\rho_{p_t}}{C_{\Omega}}}.
\end{equation}
Thus, the stability of \eqref{disvf} follows if and only if the mesh-sizes $h_t$ and $h_s$ satisfy the CFL condition \eqref{cfl}.
\begin{remark}
It is important to emphasize that the CFL condition \eqref{cfl}, which is necessary to ensure the stability of the space–time isogeometric discretization of the fourth-order wave equation \eqref{eqbiwave}, is different from the CFL condition associated with the space–time discretization of a second-order wave equation, as presented in \cite{coer,Fraschini2024}.
\end{remark}
\par Motivated from \cite{Fraschini2024,Ferrari2025}, in order to overcome the CFL restriction \eqref{cfl}, we employ the stabilized bilinear form \eqref{igaodest} to the space–time isogeometric discretization of the equation \eqref{wavevf} in the following manner.\\\\
\noindent{\bf{Stabilized formulation:}}
A stabilized space-time isogeometric discretization of \eqref{wavevf} is achieved by adding a non-consistent penalty term in \eqref{disvf} and it is given below: find $u_h\in X_h$ such that
\begin{equation}\label{stabwave}
	\begin{split}
		-\int_{0}^{T}\int_{\Omega}\partial_tu_h\partial_tv_h\ d\Omega dt+\int_{0}^{T}\int_{\Omega}\Delta u_h\Delta v_h\ d\Omega dt&-\delta h_t^{2p_t}\int_{0}^{T}\int_{\Omega}\Delta\partial_t^{p_t}u_h\Delta\partial_t^{p_t}v_h\ d\Omega dt\\
		&	=\int_{0}^{T}\int_{\Omega}fv_h\ d\Omega dt\mbox{ for all } v_h\in Y_h,
	\end{split}
\end{equation}
where $\delta\geq\delta_{p_t}$. The explicit values of $\delta_{p_t}$ for different polynomial degrees $p$ is given in Table \ref{delta_{p_t}} \cite{Ferrari2025}.
\begin{table}[H]
	\centering
	\begin{tabular}{ |c|c|c|c|c|c|c| } 
		\hline
        $p_t$&1&2&3&4&5&6\\
		\hline 
		$\delta_{p_t}$ &\ \ 1/12 \ \  &\ \ 1/120  \ \ &17/20160&5/58529&2/231067&1/1140271  \\ 
		\hline
	\end{tabular}
	\caption{Values of $\delta_{p_t}$}
	\label{delta_{p_t}}
\end{table}.
\begin{remark}
   In \cite{Steinbach2019,Zank2021}, an unconditionally stable space-time finite element discretization of second-order wave equation was proposed which is reads: 
   find $u_h\in X_h$ such that
\begin{equation}\label{stabwavefem2}
	\begin{split}
		-\int_{0}^{T}\int_{\Omega}\partial_tu_h\partial_tv_h\ d\Omega dt+\int_{0}^{T}\int_{\Omega}\nabla u_h(\mathcal{P}_{h_t}^{p_t-1,dis}\nabla v_h)\ d\Omega dt\\
		&	=\int_{0}^{T}\int_{\Omega}fv_h\ d\Omega dt 
	\end{split}
\end{equation}
for all $v_h\in Y_h,$
where $X_h$ and $Y_h$ are spaces of globally continuous piece-wise polynomials of degree $p_t$ in temporal direction, $\mathcal{P}_{h_t}^{p_t-1,dis}: L^2(L^2(\Omega))\rightarrow L^2(\Omega)\otimes Y_{t,h_t}^{p_t-1,dis}(0,T)$ is the $L^2$-orthogonal projection onto the space of piecewise polynomial discontinuous function with respect to the time variable. In the same spirit, a stabilized space-time finite element discretization of fourth-order wave equation \eqref{wavevf} can be formulated as: find $u_h\in X_h$ such that
\begin{equation}\label{stabwavefem}
	\begin{split}
		-\int_{0}^{T}\int_{\Omega}\partial_tu_h\partial_tv_h\ d\Omega dt+\int_{0}^{T}\int_{\Omega}\Delta u_h(\mathcal{P}_{h_t}^{p_t-1,dis}\Delta v_h)\ d\Omega dt\\
		&	=\int_{0}^{T}\int_{\Omega}fv_h\ d\Omega dt
	\end{split}
\end{equation}
for all $v_h\in Y_h$. If $p_t=1$ and $\delta=\frac{1}{12}$, then the two variational formulations \eqref{stabwave} and \eqref{stabwavefem} coincide. 
\end{remark}
\section{{Efficient solver}}\label{sec:solver}
The matrix system associated with \eqref{stabwave} is given as follows:
\begin{equation}\label{stabms}
	\mathbf{A}\mathbf{x}=\mathbf{f},
\end{equation}
where
\begin{equation*}
	[\mb{A}]_{i,j}=-\int_{0}^{T}\int_{\Omega}\partial_tw_j\partial_tv_i\ d\Omega dt+\int_{0}^{T}\int_{\Omega}\Delta w_j\Delta v_i\ d\Omega dt-\delta h_t^{2p_t}\int_{0}^{T}\int_{\Omega}\Delta\partial_t^{p_t}w_j\Delta\partial_t^{p_t}v_i\ d\Omega dt
\end{equation*}
and
\begin{equation*}
	[\mb{f}]_i=\int_{0}^{T}\int_{\Omega}fv_i\ d\Omega dt.
\end{equation*}
Here we assume that $w_j,j=1,\ldots,N_{dof}$ are the basis functions of the space $X_h$ and $v_i,i=1,\ldots,N_{dof}$ are the basis functions of $Y_h$. Due to the tensor product structure of the spaces $X_h$ and $Y_h$, the matrix $\mb{A}$ can be written as
\begin{equation*}
	\mb{A}=(\mb{M}_t-\mb{P}_t)\otimes\mb{K}_x-\mb{K}_t\otimes\mb{M}_x,
\end{equation*}
where
\begin{equation*}
	[\mb{M}_t]_{i,j}=\int_{0}^{T}\psi_{j,t}\psi_{i,t}\ dt\quad [\mb{P}_t]_{i,j}=\delta h_t^{2p_t}\int_{0}^{T}\partial_t^{p_t}\psi_{j,t}\partial_t^{p_t}\psi_{i,t}\ dt,
\end{equation*}
\begin{equation*}
	[\mb{K}_t]_{i,j}=\int_{0}^{T}\psi_{j,t}'\psi_{i,t}'\ dt
\end{equation*}
and
\begin{equation*}
	[\mb{M}_x]_{i,j}=\int_{\Omega}\phi_{i,x}\phi_{j,x}\ d\Omega\quad [\mb{K}_x]_{i,j}=\int_{\Omega}\Delta\phi_{i,x}\Delta\phi_{j,x}\ d\Omega,
\end{equation*}
with $\psi_{j,t}\in Y_{t,h_t},\psi_{i,t}\in X_{t,h_t}$ and $\phi_{i,x}\in W_{s,h_s}$.
One of the major drawbacks of the space-time methods is the high-computational cost of solving a linear system in $(d+1)$ dimensional space-time cylinder. To address this issue, we propose an efficient solver based on the Algorithm 1 of \cite{Loli2025} for solving the linear system \eqref{stabms}.
\par By exploiting the Kronecker product properties, the system can be solved without assembling the whole space-time matrix. First, we perform the complex generalized Schur factorization on the matrix pencil $(\mathbf{K}_t,\mathbf{M}_t-\mathbf{P}_t)$ and obtain
\begin{equation}\label{fact}
	\mathbf{C}_t\mathbf{K}_t\mathbf{D}_t=\mathbf{E}_t\mbox{ and }\mathbf{C}_t(\mathbf{M}_t-\mathbf{P}_t)\mathbf{D}_t=\mathbf{F}_t,
\end{equation}
where $\mathbf{C}_t$ and $\mathbf{D}_t$ are two unitary matrices, and $\mathbf{E}_t$ and $\mathbf{F}_t$ are two upper triangular matrices. Let $\mathbf{B}_t=\mathbf{E}^{-1}_t\mathbf{F}_t$. Then the matrix $\mathbf{A}$ can be written as
\begin{equation*}
	\mathbf{A}=(\mathbf{C}_t^*\mathbf{E}_t\otimes \mathbf{I}_{x})(\mathbf{B}_t\otimes\mathbf{K}_x-\mathbf{I}_t\otimes\mathbf{M}_x)(\mathbf{D}_t^*\otimes\mathbf{I}_x),
\end{equation*}
where $*$ denotes the conjugate transpose. Using the fact that $(\mathbf{A}_1\otimes\mathbf{A}_2)^{-1}=\mathbf{A}_1^{-1}\otimes\mathbf{A}_2^{-1}$, the inverse of $\mathbf{A}$ can be written as
\begin{equation*}
	\mathbf{A}^{-1}=(\mb{D}_t\otimes\mb{I}_x)(\mb{B}_t\otimes\mb{K}_x-\mb{I}_t\otimes\mb{M}_x)^{-1}(\mb{E}_t^{-1}\mb{C}_t\otimes\mb{I}_x).
\end{equation*}
In Algorithm \ref{al:solving}, the procedure to solve the system is given.
\begin{algorithm}
	\caption{Solving Algorithm}\label{al:solving}
	\begin{algorithmic}[1] 
		\State Compute the factorization \eqref{fact} and $\mb{B}_t=\mb{E}_t^{-1}\mb{F}_t$;
		\State Solve $(\mb{E}_t\otimes\mb{I}_x)\mb{s}_1=(\mb{C}_t\otimes\mb{I}_x)\mb{x}$ ;
		\State Solve $(\mb{B}_t\otimes\mb{K}_x-\mb{I}_t\otimes\mb{M}_x)\mb{s}_2=\mb{s}_1$;
		\State Compute $\mb{x}=(\mb{D}_t\otimes\mb{I}_x)\mb{s}_2$.
	\end{algorithmic}
\end{algorithm}
\\\noindent{\bf{Computational cost:}}
Next, we discuss the total computational cost taken by Algorithm \ref{al:solving}. For more details, we refer to \cite{Loli2025,Ferrarihamil}. In the Step 1, we have to perform a Schur factorization which takes $\mathcal{O}(n_t^3)$  Floating Point Operations (FLOPs), and the calculation of $\mb{B}_t=\mb{E}_t^{-1}\mb{F}_t$ takes $\mathcal{O}(n_t^2p)$ FLOPs. Hence, the total cost of Step 1 is $\mathcal{O}(n_t^3+n_t^2p)$ FLOPs. Step 2 involves solving $\mb{n}_s$ independent upper triangular systems associated with $\mb{E}_t$ which takes $\mathcal{O}(n_tp^2+\mb{n}_sn_tp)$ FLOPs. Also, in Step 2, $\mb{n}_s$ matrix-vector products associated with $\mb{C}_t$ are performed taking $\mathcal{O}(\mb{n}_sn_t^2)$ FLOPs. Step 3 solves a block upper triangular system in which each box has dimensions $\mb{n}_s\times\mb{n}_s$. Suppose $C_{1_x}$ is the cost of solving a single diagonal block system and $C_{2_x}$ is the cost of a matrix vector product involving one of the $\mathcal{O}(n_t^2)$ off diagonal block. The resulting computational cost is $\mathcal{O}(C_{1_x}n_t+C_{2_x}n_t^2)$. Finally, in Step 4 the $\mb{n}_s$ matrix-vector product involving $\mathbf{D}_t$ causes $\mathcal{O}(\mathbf{n}_sn_t^2)$ FLOPs. Hence, the total cost is $\mathcal{O}(n_t^3+n_t^2p+n_tp^2+\mb{n}_sn_tp+\mb{n}_sn_t^2+C_{1_x}n_t+C_{2_x}n_t^2)$ FLOPs. If we consider uniform refinement in spatial and temporal direction, when $n_t$ doubles, $\mathbf{n}_s$ grows with $\mathcal{O}(2^d)$ and hence the total number of degrees of freedom $N_{dof}$ increases by a factor $\mathcal{O}(2^{d+1})$. So, we have $n_t\sim \mathbf{n}_s^{1/d}\sim N_{dof}^{1/d+1}$.  Then, the total computational cost when assuming $p\leq N_{dof}$ is $\mathcal{O}(N_{dof}^\frac{d+2}{d+1}+C_{1_x}N_{dof}^\frac{1}{d+1}+C_{2_x}N_{dof}^\frac{2}{1+d})$.

\begin{remark}
    For the dense spatial matrices, the associated computational costs scale as $C_{1_x}=\mathcal{O}(\mathbf{n}_s^3)$ and $C_{2_x}=\mathcal{O}(\mathbf{n}_s^2)$. In contrast, Step 3 of Algorithm \ref{al:solving} involves the sparse matrices $\mathbf{M}_x$ and $\mathbf{K}_x$. Modern computers execute sparse matrix operations efficiently, yielding a substantial reduction in total computational cost. Specifically, for a two-dimensional domain, the dominant cost scales as $\mathcal{O}(N_{dof}^{4/3})$.
\end{remark}

  \section{Numerical experiments}\label{sec:numerical}
In this section, we present three numerical tests to illustrate the accuracy, stability, and convergence properties of the proposed space-time isogeometric discrete formulation \eqref{stabwave} for the biharmonic wave equation, as well as the efficiency of the linear solver discussed in Section \ref{sec:solver}. For all tests, we set the final time $T=1$. In the first test, we verify the stability of the space-time discrete formulations \eqref{stabwave},\eqref{stabwavefem} and \eqref{disvf} with splines of different regularities. The second experiment analyzes the performance of the linear solver discussed in Section \ref{sec:solver} in terms of computational cost. In the last test, we directly compare the accuracy and efficiency of the space-time IgA formulation \eqref{stabwave} against the space-time FEM formulation \eqref{stabwavefem} with respect to total number of degrees of freedom. 
All numerical simulations are performed with the GeoPDEs toolbox \cite{geopde} in MATLAB 2024b with a single-core configuration, on a Intel Xeon processor running at 3.90 GHz, with 256 GB RAM. The MATLAB code used in this study is developed based on the implementation available for the wave equation in \cite{github}. Throughout this section, $p$ denotes the spline degree, with $p\geq 2$, in both the spatial and temporal directions. In what follows, we write IgA-stab formulation for the stabilized space-time isogeometric method \eqref{stabwave}, and FEM-stab formulation  for the stabilized space-time finite element method \eqref{stabwavefem}.

\begin{figure}[htbp]
	\centering
	\begin{subfigure}{0.3\textwidth}
		\includegraphics[height=6cm,width=6cm]{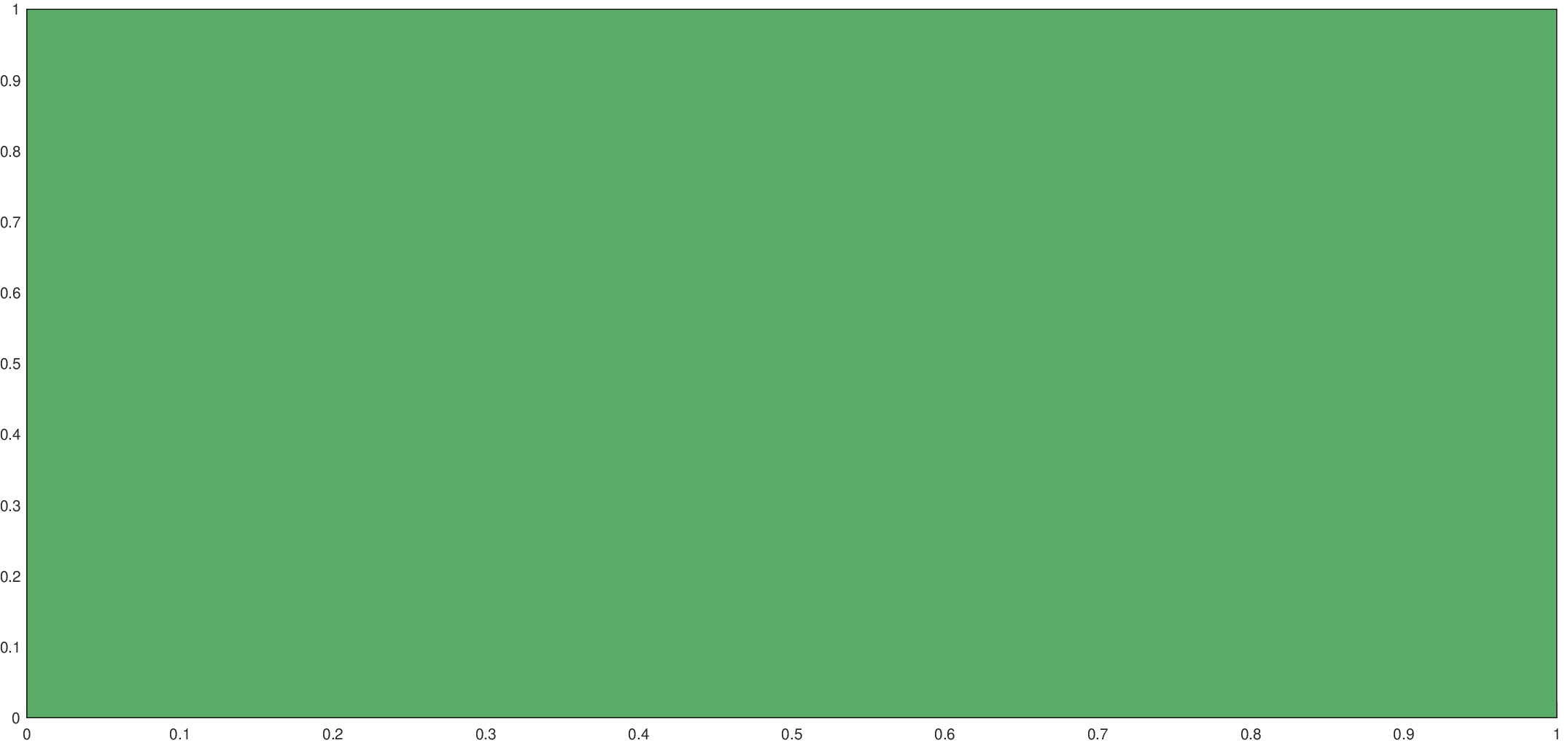}
		\caption{Square domain.}
		\label{square_domain}
	\end{subfigure}
	\hspace{1.5cm}
	\begin{subfigure}{0.3\textwidth}
		\includegraphics[height=6cm,width=6cm]{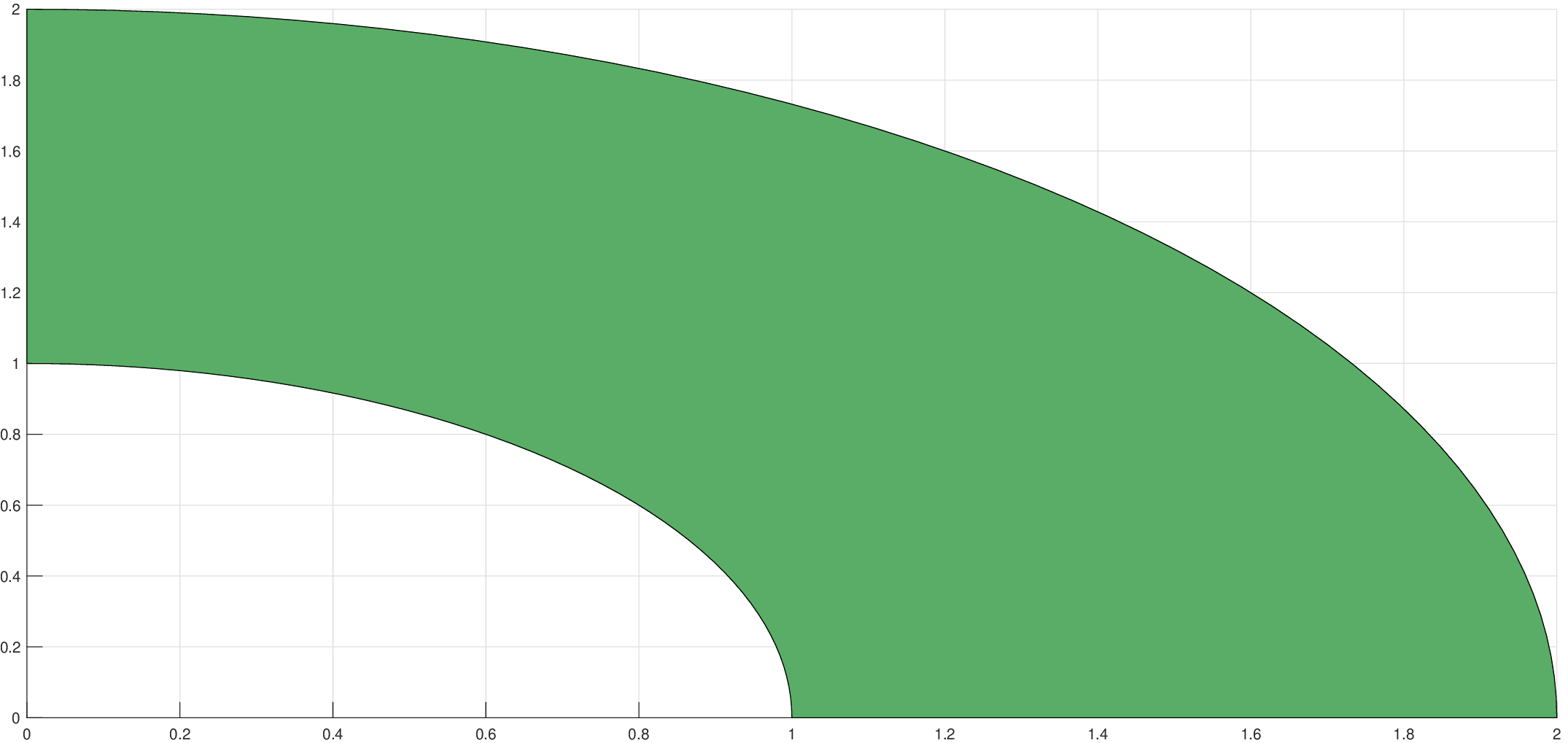}
		\caption{Ring shaped domain.}
		\label{ring_domain}
	\end{subfigure}
	\caption{Computational domains.}
	\label{domains}
\end{figure}

\noindent{\bf{Example 1. Unconditional stability:}} 
In this example, we consider the biharmonic wave equation \eqref{eqbiwave} on the unit square domain $\Omega=(0,1)\times(0,1)$, as illustrated in Figure \ref{square_domain}.  The data $f$ in equation \eqref{eqbiwave} is chosen such that the manufactured solution of \eqref{eqbiwave} is $u(x,y,t)=t^2\sin^2(\pi x)\sin^2(\pi y)$. We apply the proposed IgA-stab formulation \eqref{stabwave}, FEM-stab formulation \eqref{stabwavefem} and the formulation \eqref{disvf} without any stabilization to solve problem \eqref{eqbiwave} numerically. For the IgA-stab method, we choose $\delta=10^{-p}$ for splines of degree $p=2,\ 3,\ 4$. This choice follows from the numerical evidence given in \cite[Figure 1]{Fraschini2024} and the theoretical justification given in \cite[Remark 5.8]{Ferrari2025}. We investigate the stability properties of all three discretization for splines of different regularity. In particular, we use $C^{p-1}$ regularity splines in spatial direction, and in temporal direction we choose $C^{p-1},\ C^{p-2},\ C^0$ continuous splines. To assess stability and convergence, we compute relative errors in $L^2(L^2(\Omega))$, $L^2(H^1_0(\Omega))\cap H^1(L^2(\Omega))$ and $X$ norm. The spatial and temporal mesh sizes are set to be  $h_s=h_t=h=\frac{1}{2},\ \frac{1}{4},\ \frac{1}{8},\ \frac{1}{16},\ \frac{1}{32},\ \frac{1}{64}$. Note that, for this choice of $h_s$ and $h_t$, the CFL condition \eqref{cfl} is not satisfied. In Figure \ref{igastabmax}, we display the errors obtained from IgA-stab formulation with $C^{p-1}$ regularity splines in both space and time direction. From Figure \ref{igastabmax}, we observe that the rate of convergence in $L^2(L^2(\Omega))$ norm is $\mathcal{O}(h^{p+1})$ (except for $p=2$) ,  $L^2(H^1_0(\Omega))\cap H^1(L^2(\Omega))$ norm is $\mathcal{O}(h^p)$ and in $X$ norm, it is $\mathcal{O}(h^{p-1})$ and the IgA-stab formulation is unconditionally stable since the error remains bounded as we reduce the mesh-sizes. However, when we reduce the regularity of splines in temporal direction for the IgA-stab formulation, instability kicks and the relative errors grows with reduction in mesh-size, see Figures \ref{igastab_C_P_2} and \ref{igastab_C_0}. Furthermore, the errors displayed in Figure \ref{femstabc0} shows the unconditional stability of the FEM-stab formulation \eqref{stabwavefem} for splines of $C^0$ regularity in temporal direction. In contrast, Figures \ref{femstabmax} and \ref{femstabc_p_2} reveal that the FEM-stab formulation becomes unstable when applied to splines with higher regularity ($C^{p-1}$ and $C^{p-2}$) in temporal variable. Finally, we examine the unstabilized formulation \eqref{disvf}. The errors obtained from the formulation \eqref{disvf} without any stabilization for splines of regularity $C^{p-1},\ C^{p-2},\ C^0$ in temporal direction are displayed in Figures \ref{nostabmax}, \ref{nostabCP_P_2} and \ref{nostabC_0} respectively. We observe from these figures that, since the CFL condition \eqref{cfl} is not satisfied for the chosen mesh-sizes, instability emerges for the splines of regularity $C^{p-1},\ C^{p-2},\ C^0$ in temporal direction.  Based on these observations, in Table \ref{stabcomp}, we summarize the stability properties of all three discretization according to the regularity of splines.
\begin{table}[h]
	\centering
	\caption{Comparison of stability obtained for the IgA FEM and without stabilized discretization with respect to different regularity of splines}
	\label{stabcomp}
	\begin{tabular}{c c c c}
		\hline
		Stabilization & Spline regularity in space& Spline regularity in time & Stability\\
		\hline
		None& $C^k,1\leq k\leq p-1$ &$C^k,0\leq k\leq p-1$& Conditionally stable\\
		IgA& $C^k,1\leq k\leq p-1$ &$C^{p-1}$& Stable\\
		IgA& $C^k,1\leq k\leq p-1$ &$C^k,0\leq k< p-1$& Conditionally stable\\
		FEM& $C^k,1\leq k\leq p-1$ &$C^0$& Stable\\
		FEM& $C^k,1\leq k\leq p-1$ &$C^k,0< k\leq p-1$& Conditionally stable\\
		\hline
	\end{tabular}
\end{table}
\begin{figure}[htbp]
	\begin{subfigure}{0.3\textwidth}
		\includegraphics[height=6cm,width=5cm]{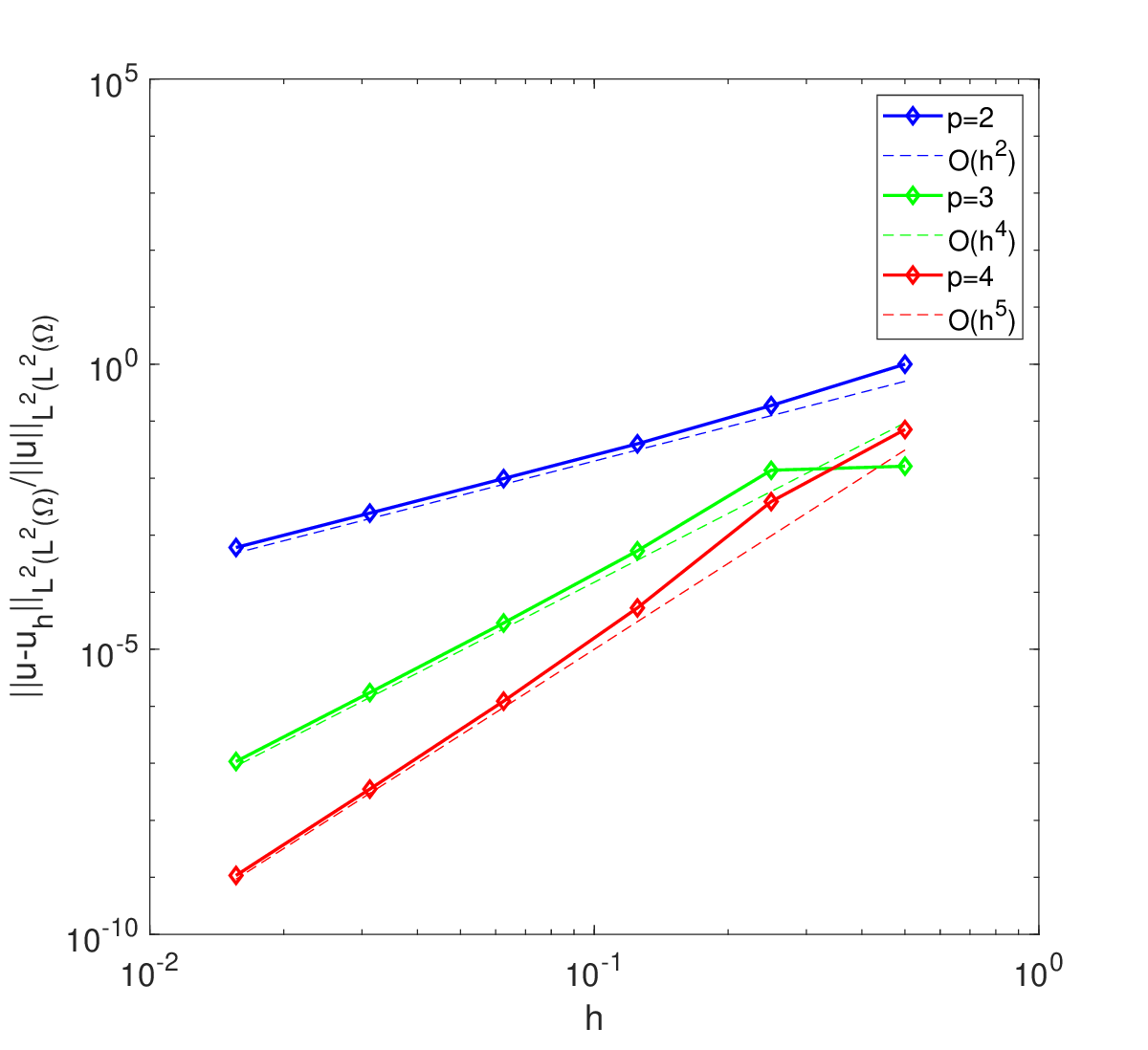}
		\caption[Caption]{}
		\label{}
	\end{subfigure}
	\hspace{0.55cm}
	\begin{subfigure}{0.3\textwidth}
		\includegraphics[height=6cm,width=5cm]{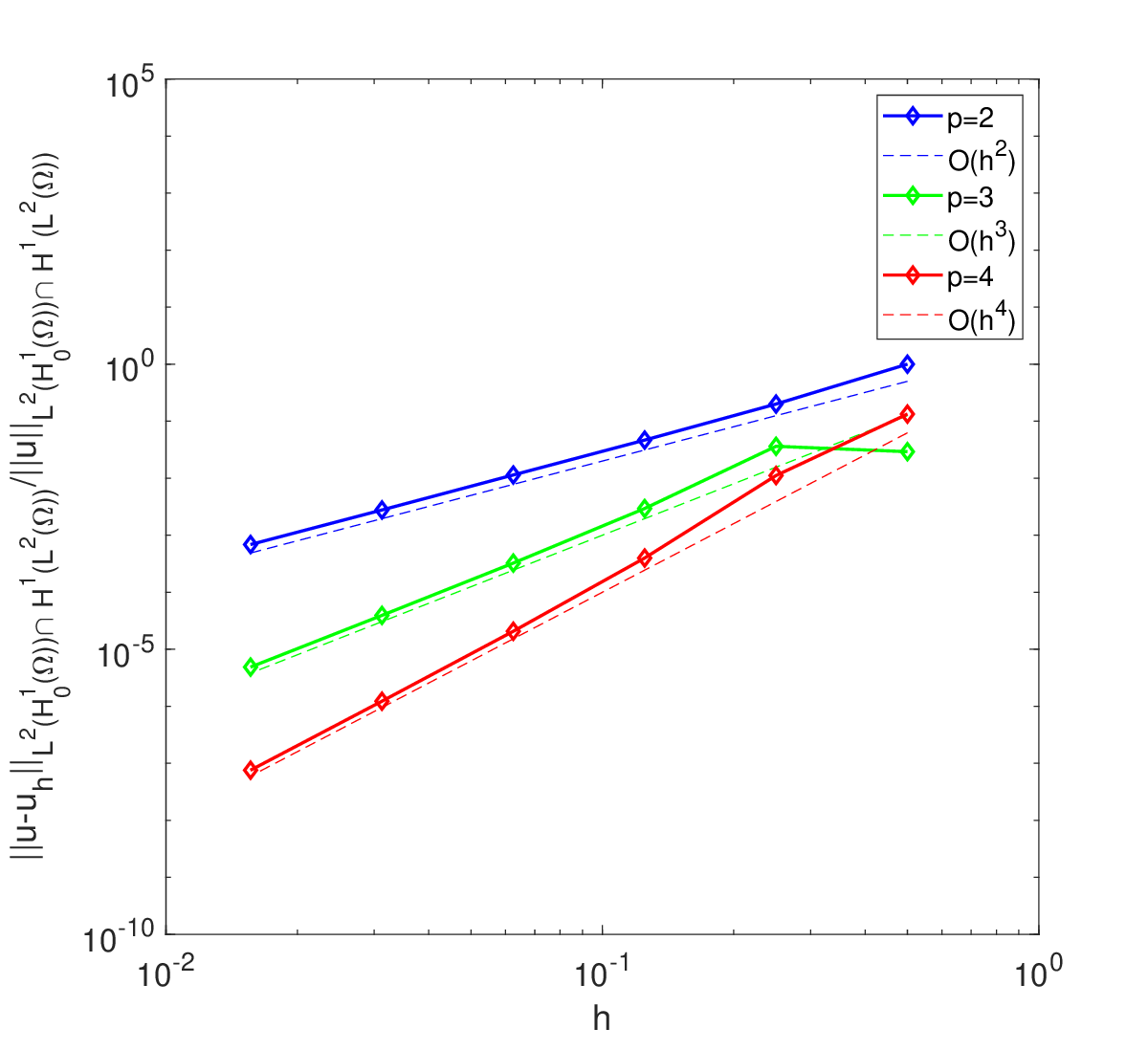}
		\caption[Caption]{}
	\end{subfigure}
	\hspace{0.55cm}
	\begin{subfigure}{0.3\textwidth}
		\includegraphics[height=6cm,width=5cm]{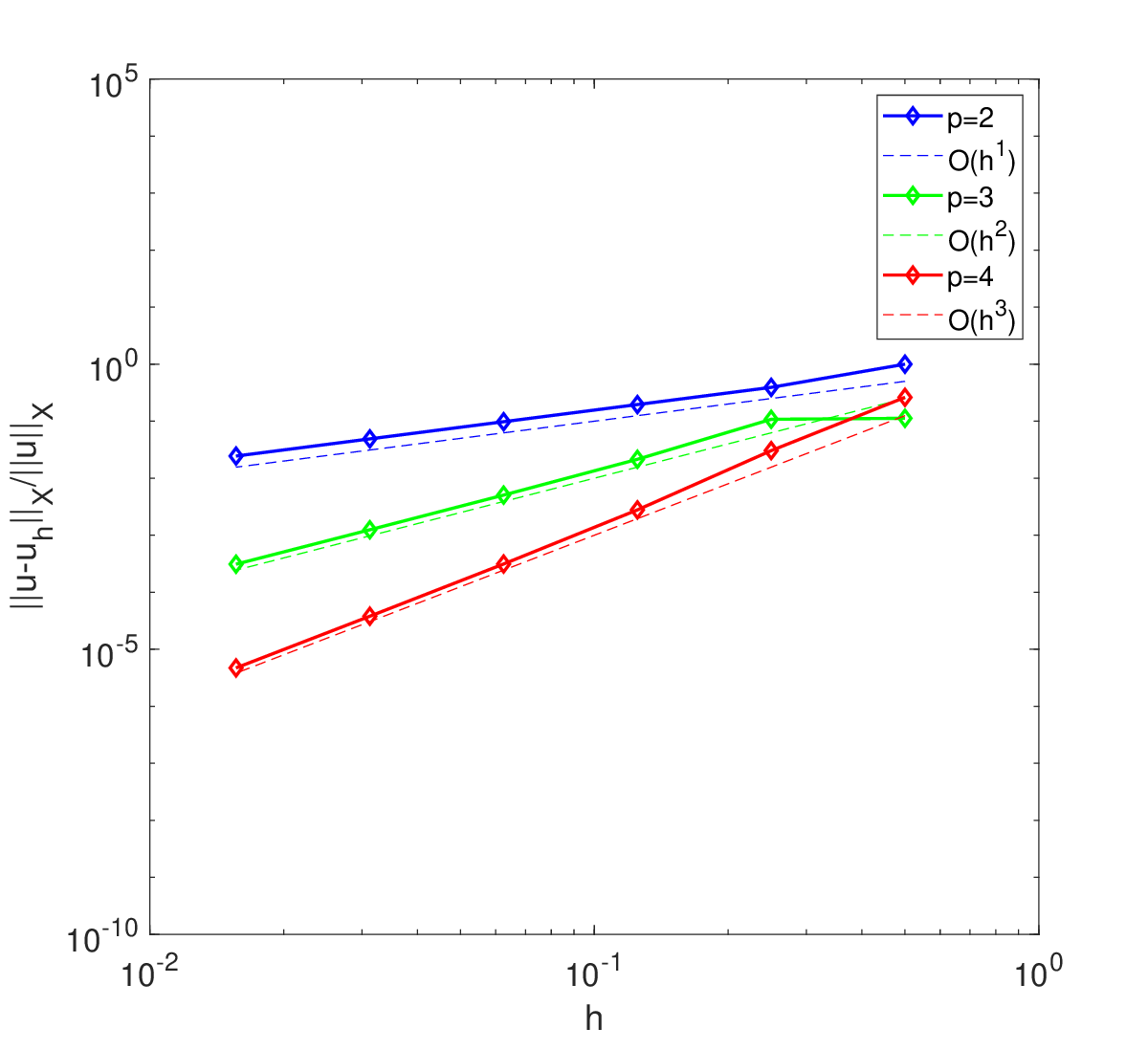}
		\caption[Caption]{}
	\end{subfigure}
	
	\caption{Relative errors in (a) $L^2(L^2(\Omega))$ norm, (b) $L^2(H^1_0(\Omega))\cap H^1(L^2(\Omega))$ norm and (c) $X$ norm for the IgA stabilization method with splines of maximum regularity in both space and time direction for Square domain.}
	\label{igastabmax}
\end{figure}
\begin{figure}[htbp]
	\begin{subfigure}{0.3\textwidth}
		\includegraphics[height=6cm,width=5cm]{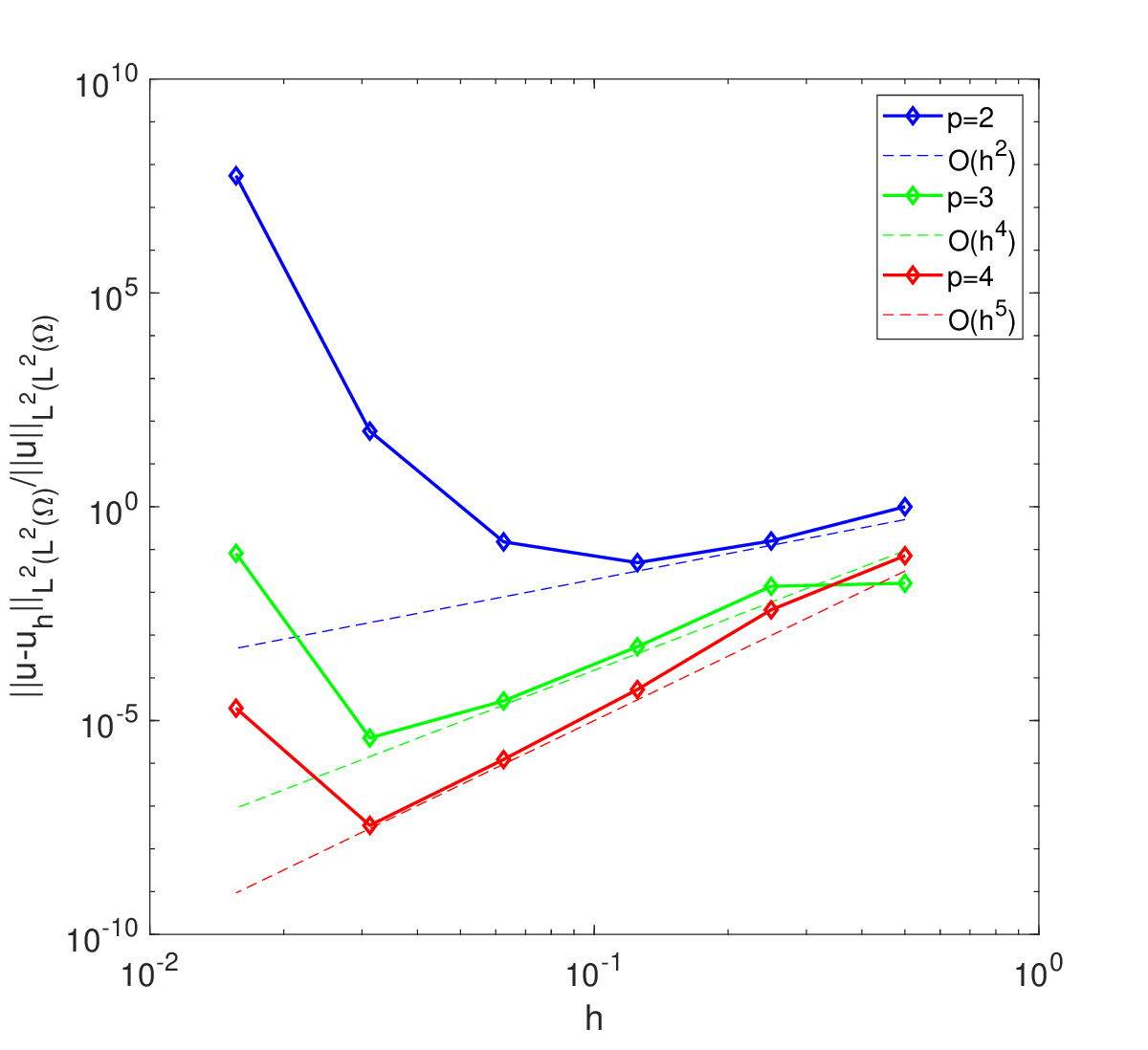}
		\caption[Caption]{}
		\label{}
	\end{subfigure}
	\hspace{0.55cm}
	\begin{subfigure}{0.3\textwidth}
		\includegraphics[height=6cm,width=5cm]{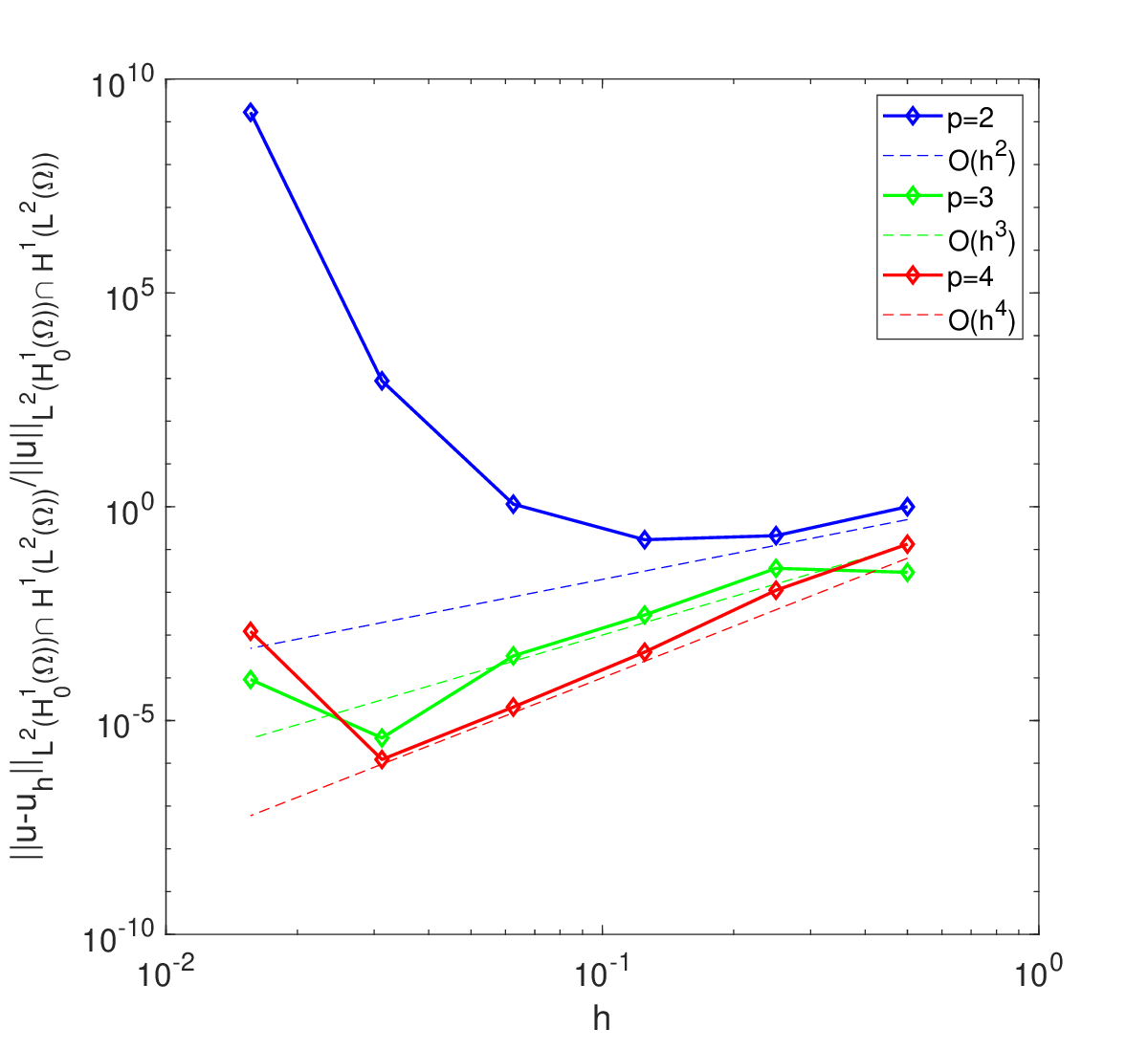}
		\caption[Caption]{}
	\end{subfigure}
	\hspace{0.55cm}
	\begin{subfigure}{0.3\textwidth}
		\includegraphics[height=6cm,width=5cm]{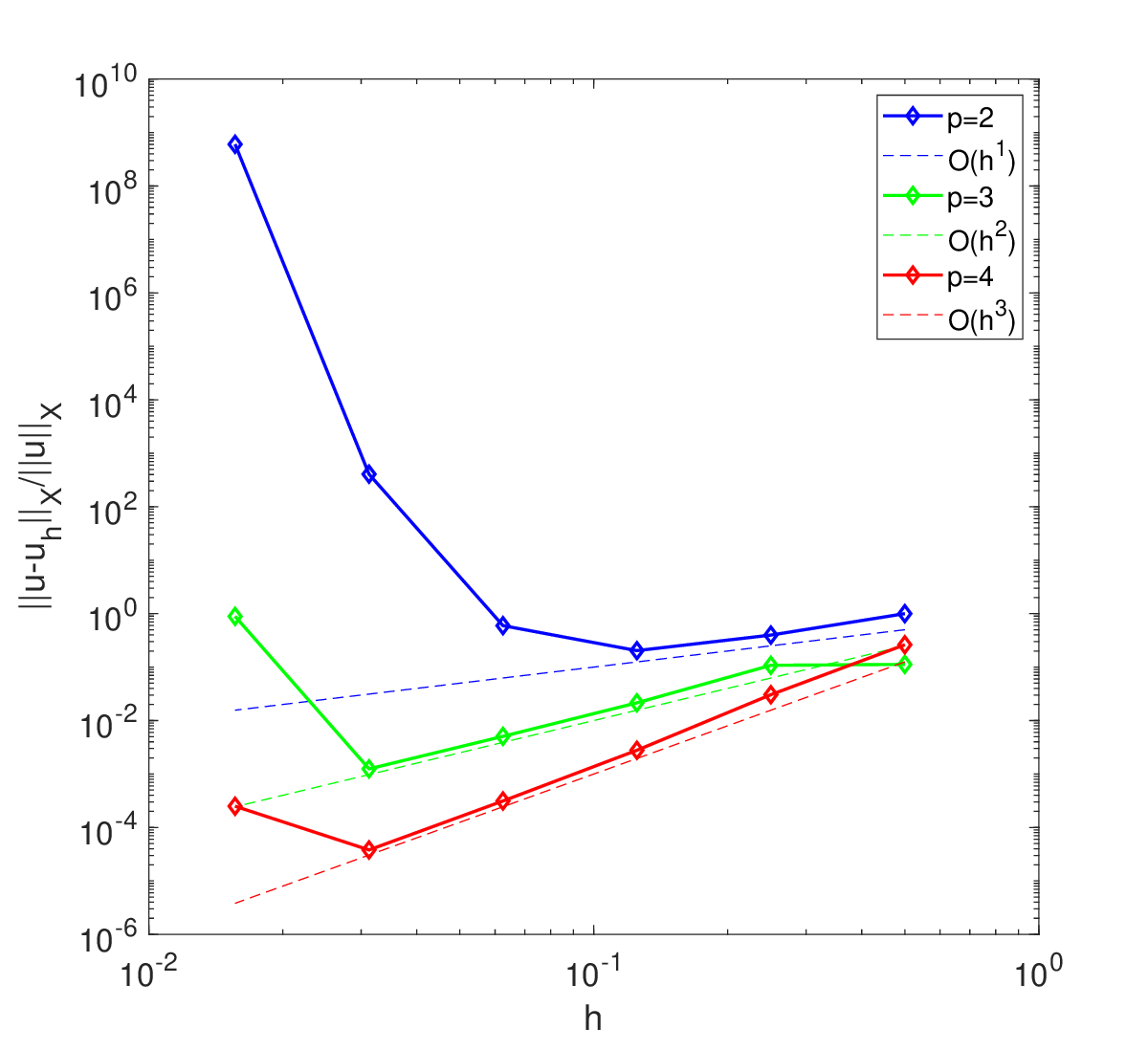}
		\caption[Caption]{}
	\end{subfigure}
	
	\caption{Relative errors in (a) $L^2(L^2(\Omega))$ norm, (b) $L^2(H^1_0(\Omega))\cap H^1(L^2(\Omega))$ norm and (c) $X$ norm for the IgA stabilization method with splines of maximum regularity in space and $C^{p-2}$ regularity in time direction for Square domain.}
	\label{igastab_C_P_2}
\end{figure}
\begin{figure}[htbp]
	\begin{subfigure}{0.3\textwidth}
		\includegraphics[height=6cm,width=5cm]{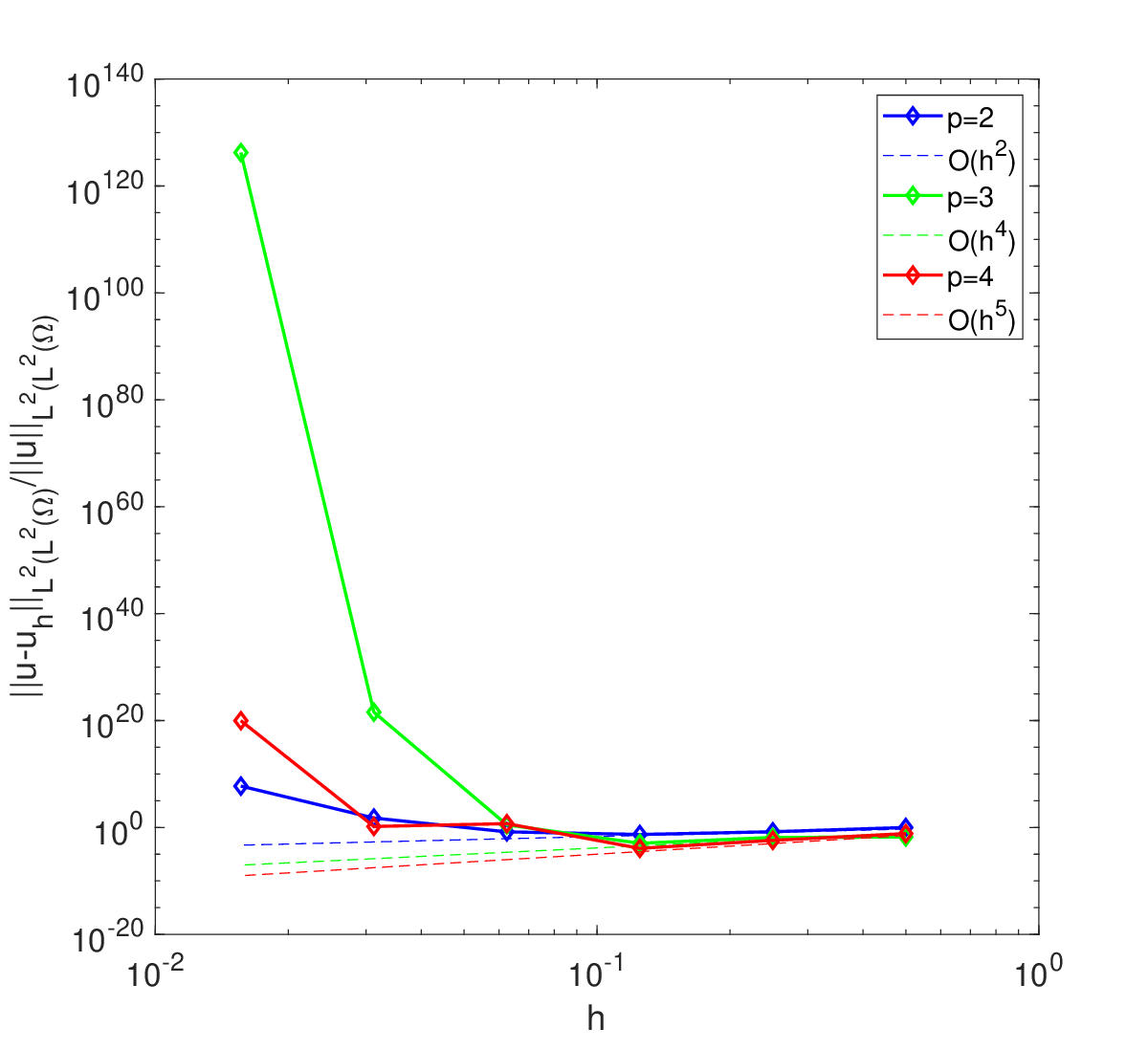}
		\caption[Caption]{}
		\label{}
	\end{subfigure}
	\hspace{0.55cm}
	\begin{subfigure}{0.3\textwidth}
		\includegraphics[height=6cm,width=5cm]{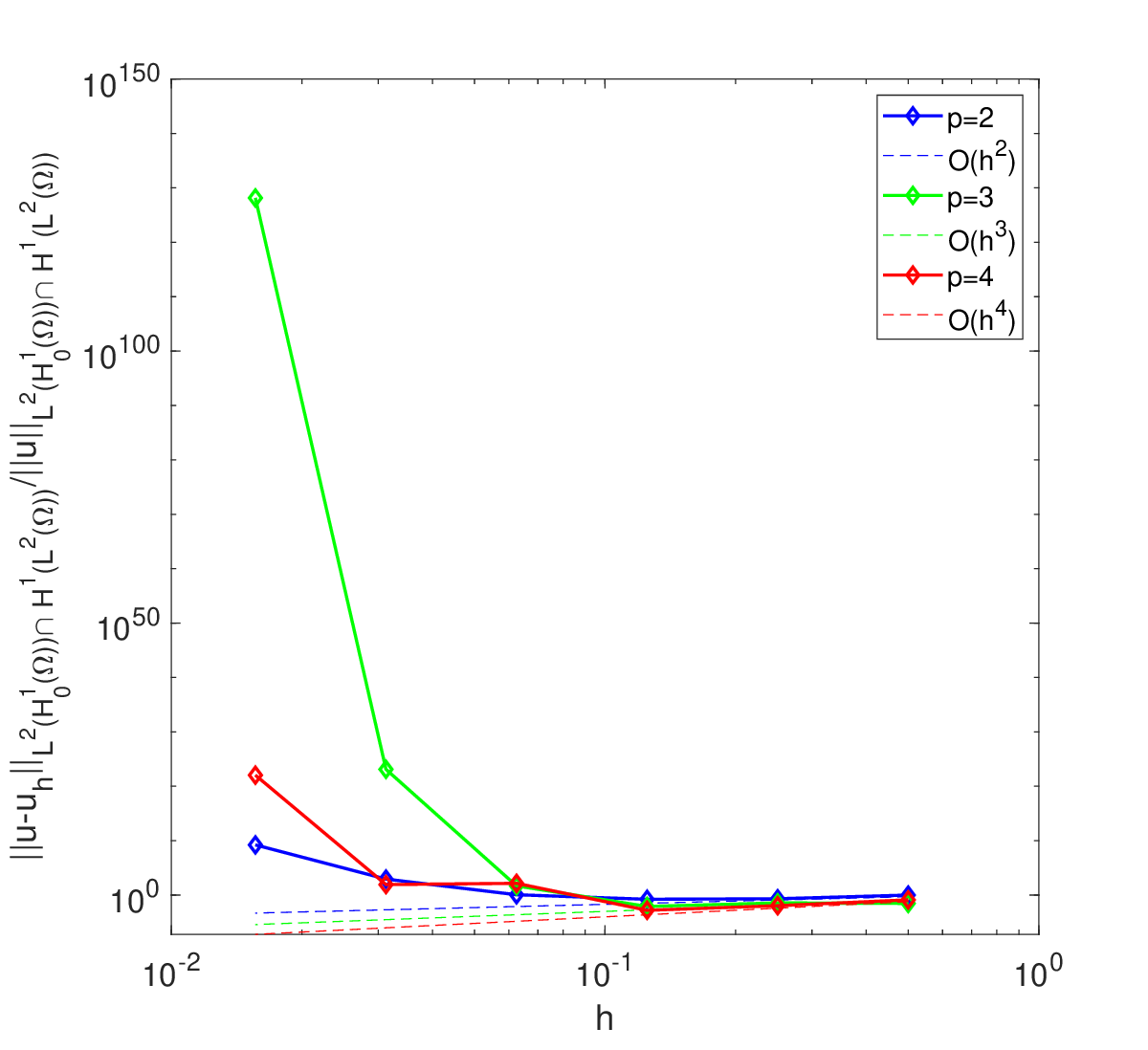}
		\caption[Caption]{}
	\end{subfigure}
	\hspace{0.55cm}
	\begin{subfigure}{0.3\textwidth}
		\includegraphics[height=6cm,width=5cm]{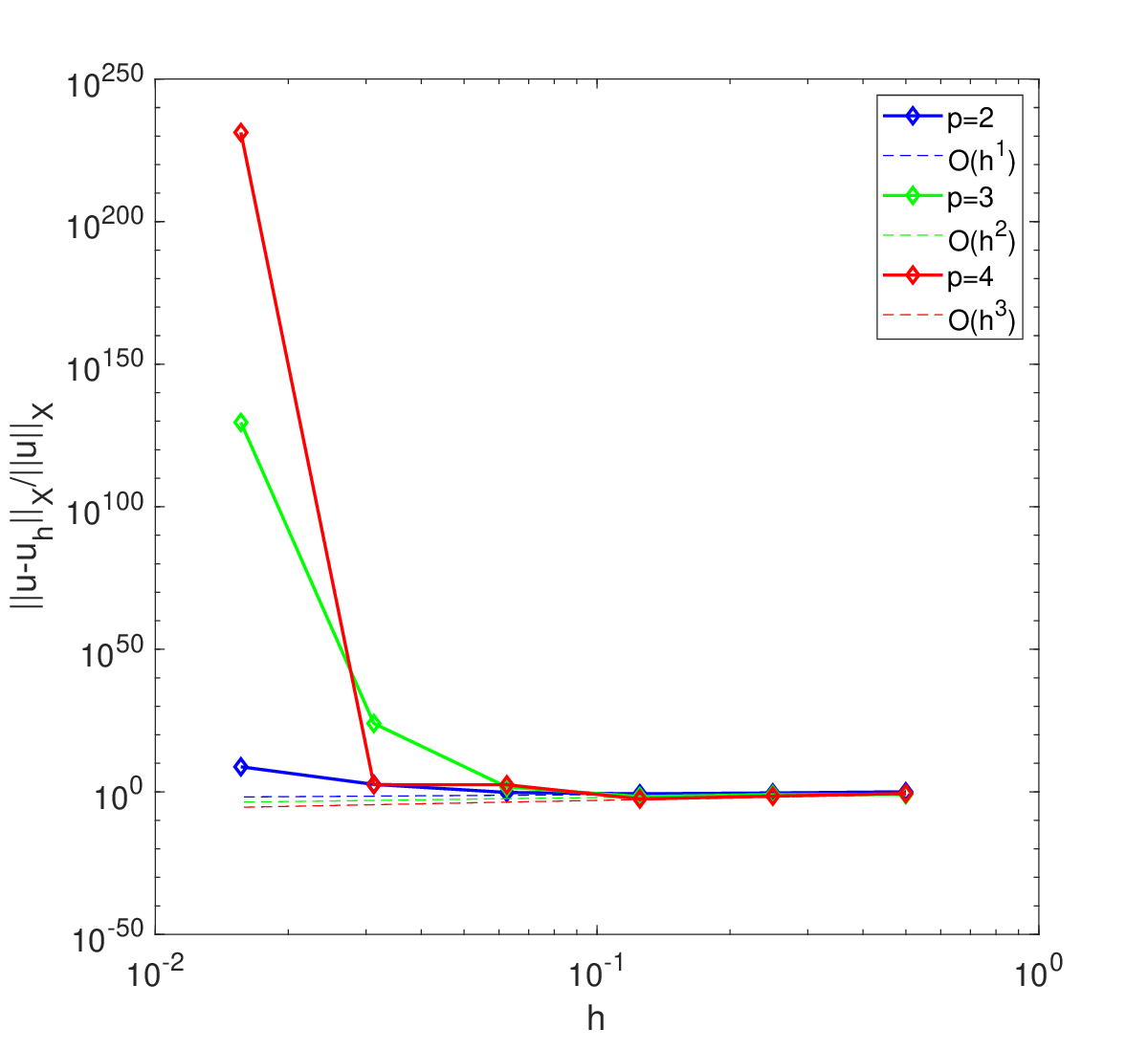}
		\caption[Caption]{}
	\end{subfigure}
	
	\caption{Relative errors in (a) $L^2(L^2(\Omega))$ norm, (b) $L^2(H^1_0(\Omega))\cap H^1(L^2(\Omega))$ norm and (c) $X$ norm for the IgA stabilization method with splines of maximum regularity in space and $C^0$ regularity in time direction for Square domain.}
	\label{igastab_C_0}
\end{figure}
\begin{figure}[htbp]
	\begin{subfigure}{0.3\textwidth}
		\includegraphics[height=6cm,width=5cm]{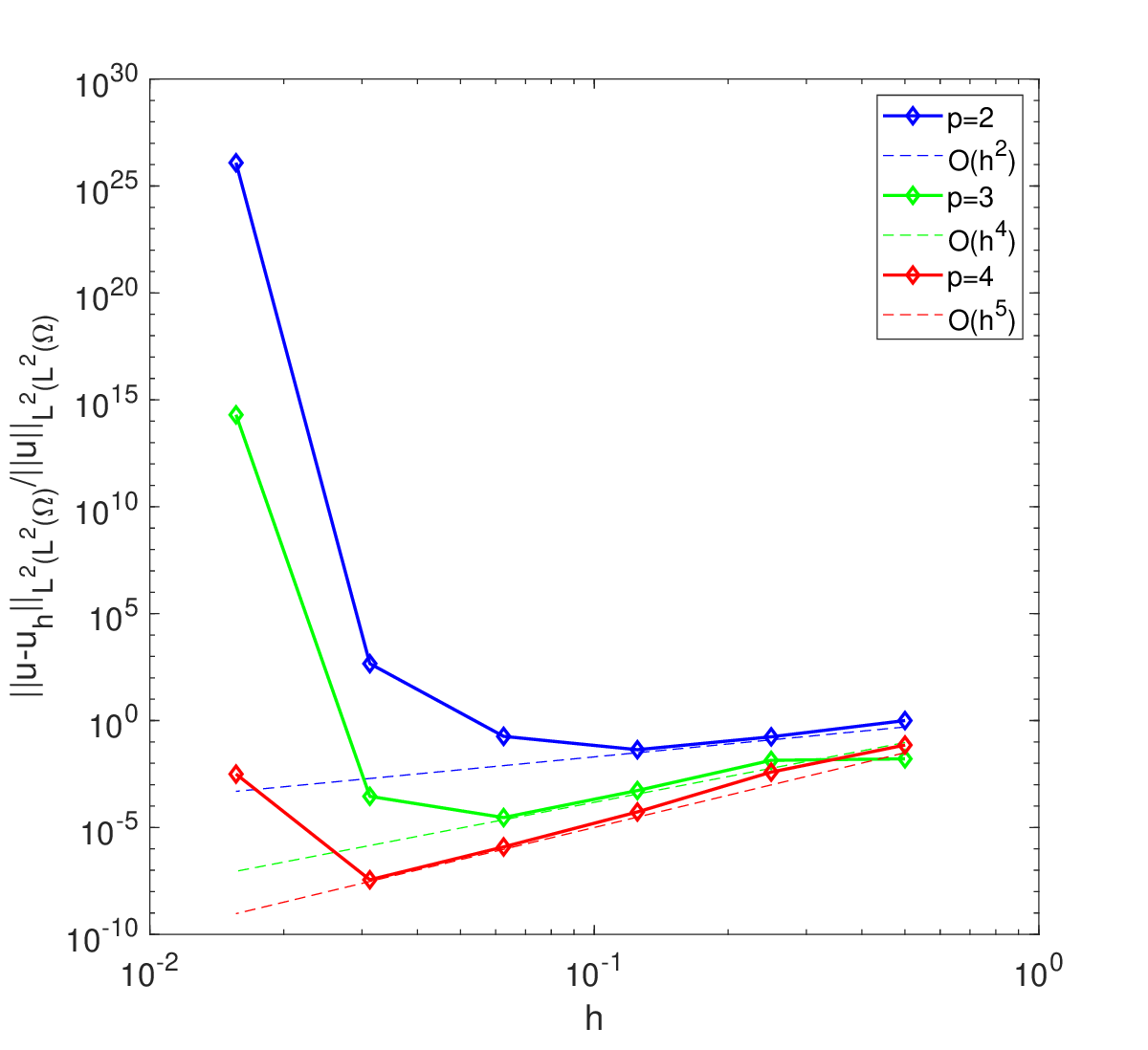}
		\caption[Caption]{}
		\label{}
	\end{subfigure}
	\hspace{0.55cm}
	\begin{subfigure}{0.3\textwidth}
		\includegraphics[height=6cm,width=5cm]{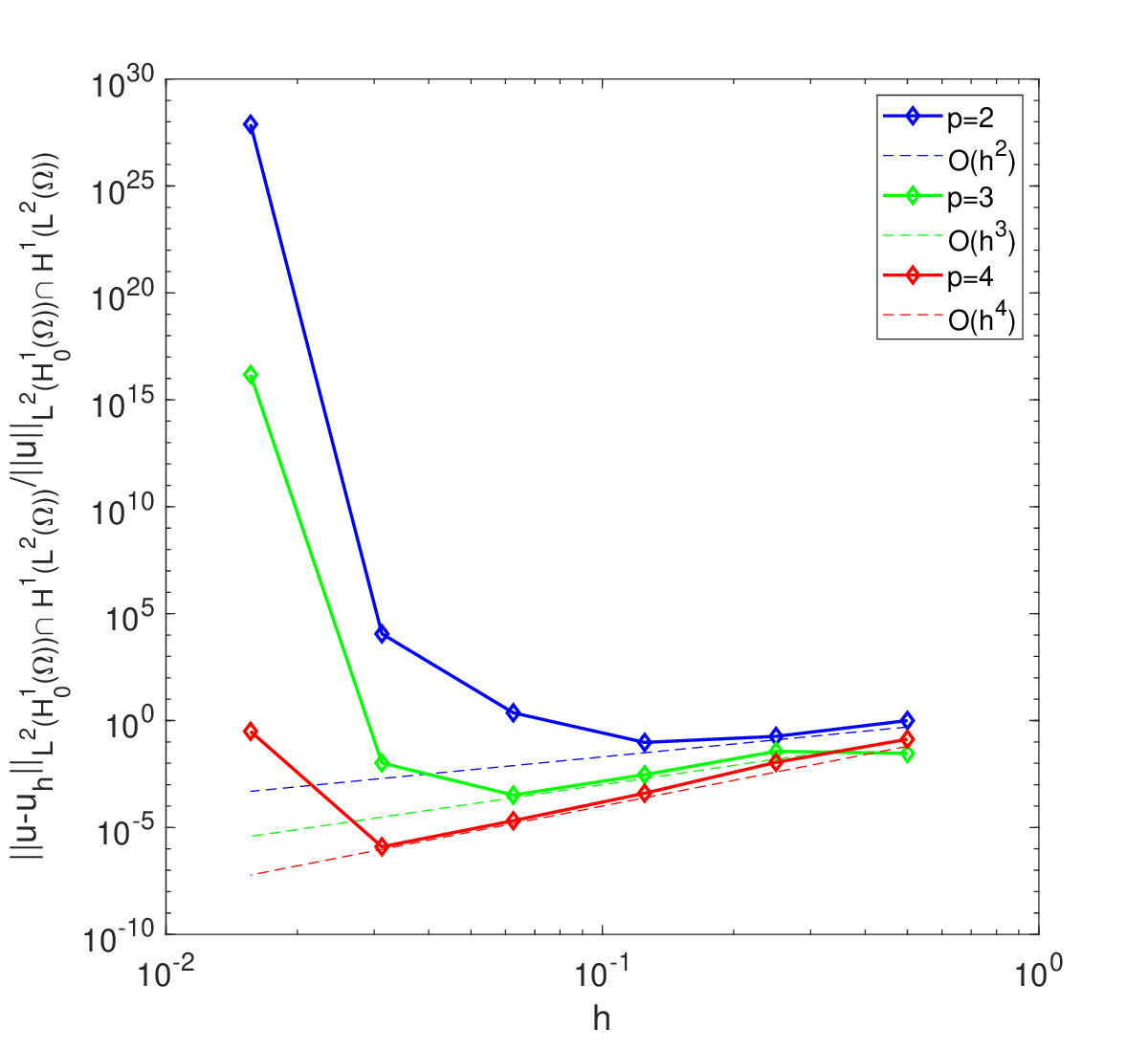}
		\caption[Caption]{}
	\end{subfigure}
	\hspace{0.55cm}
	\begin{subfigure}{0.3\textwidth}
		\includegraphics[height=6cm,width=5cm]{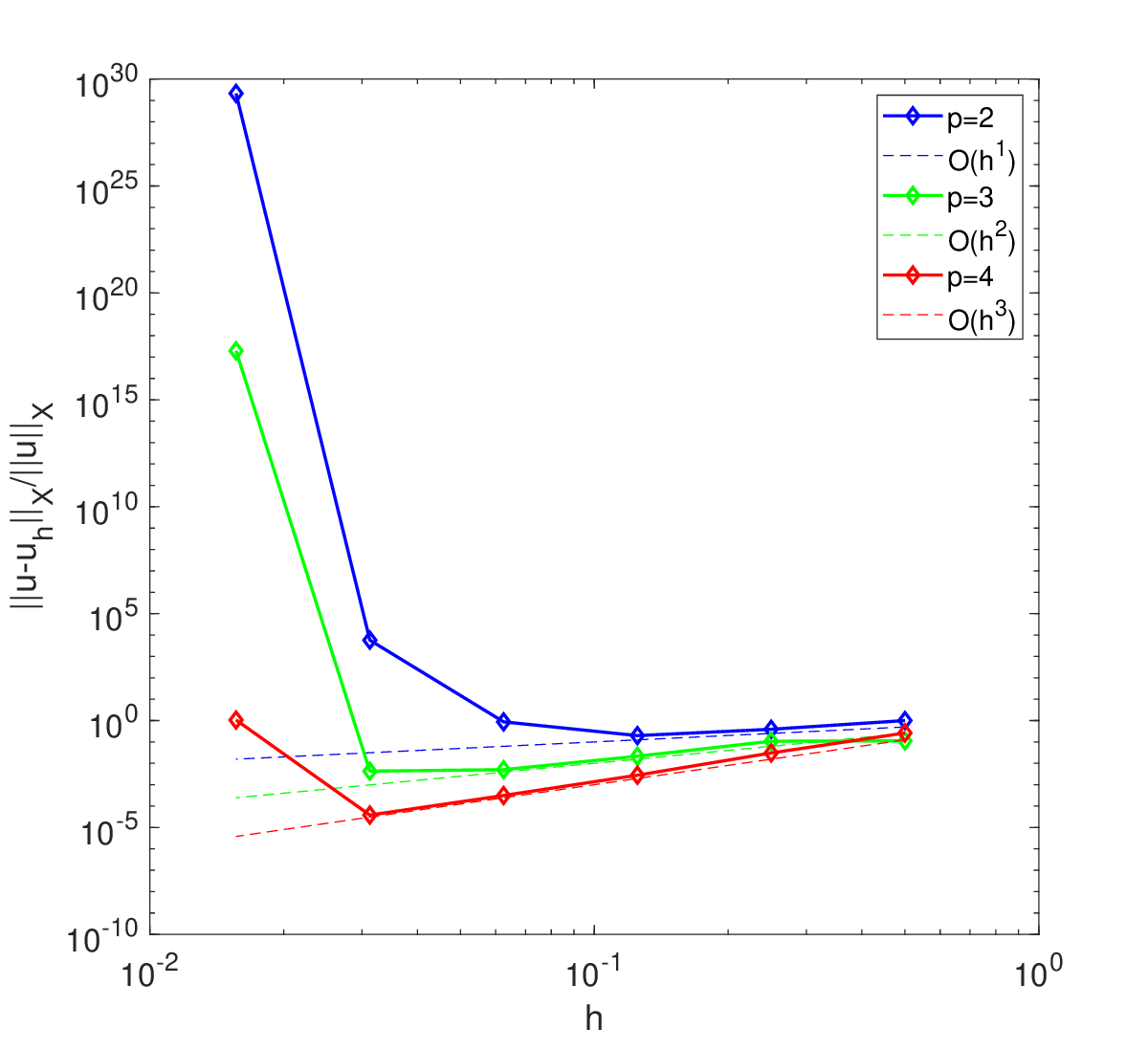}
		\caption[Caption]{}
	\end{subfigure}
	
	\caption{Relative errors in (a) $L^2(L^2(\Omega))$ norm, (b) $L^2(H^1_0(\Omega))\cap H^1(L^2(\Omega))$ norm and (c) $X$ norm for the FEM stabilization method with splines of maximum regularity in both space and time direction for Square domain.}
	\label{femstabmax}
\end{figure}

\begin{figure}[htbp]
	\begin{subfigure}{0.3\textwidth}
		\includegraphics[height=6cm,width=5cm]{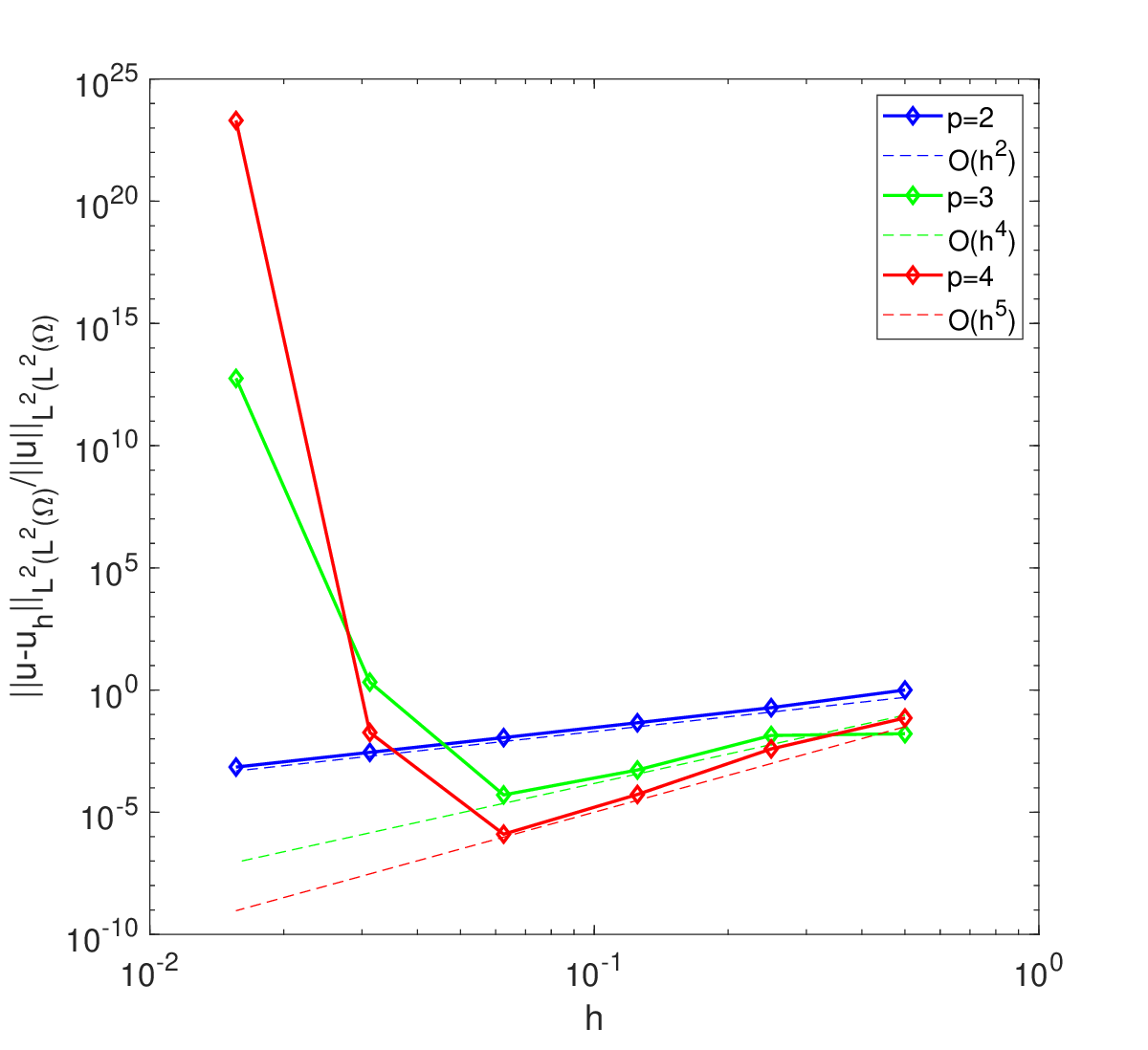}
		\caption[Caption]{}
		\label{}
	\end{subfigure}
	\hspace{0.55cm}
	\begin{subfigure}{0.3\textwidth}
		\includegraphics[height=6cm,width=5cm]{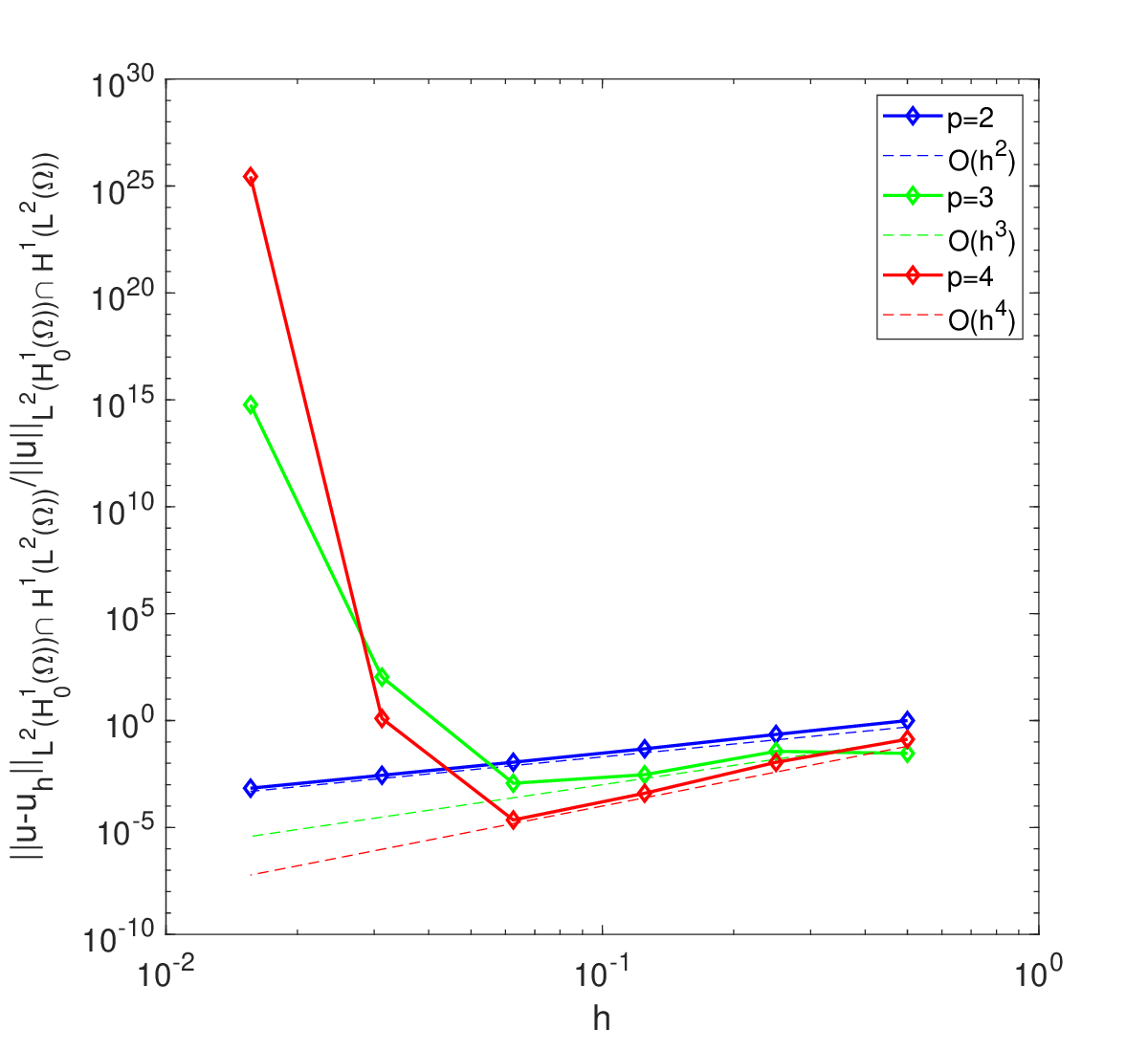}
		\caption[Caption]{}
	\end{subfigure}
	\hspace{0.55cm}
	\begin{subfigure}{0.3\textwidth}
		\includegraphics[height=6cm,width=5cm]{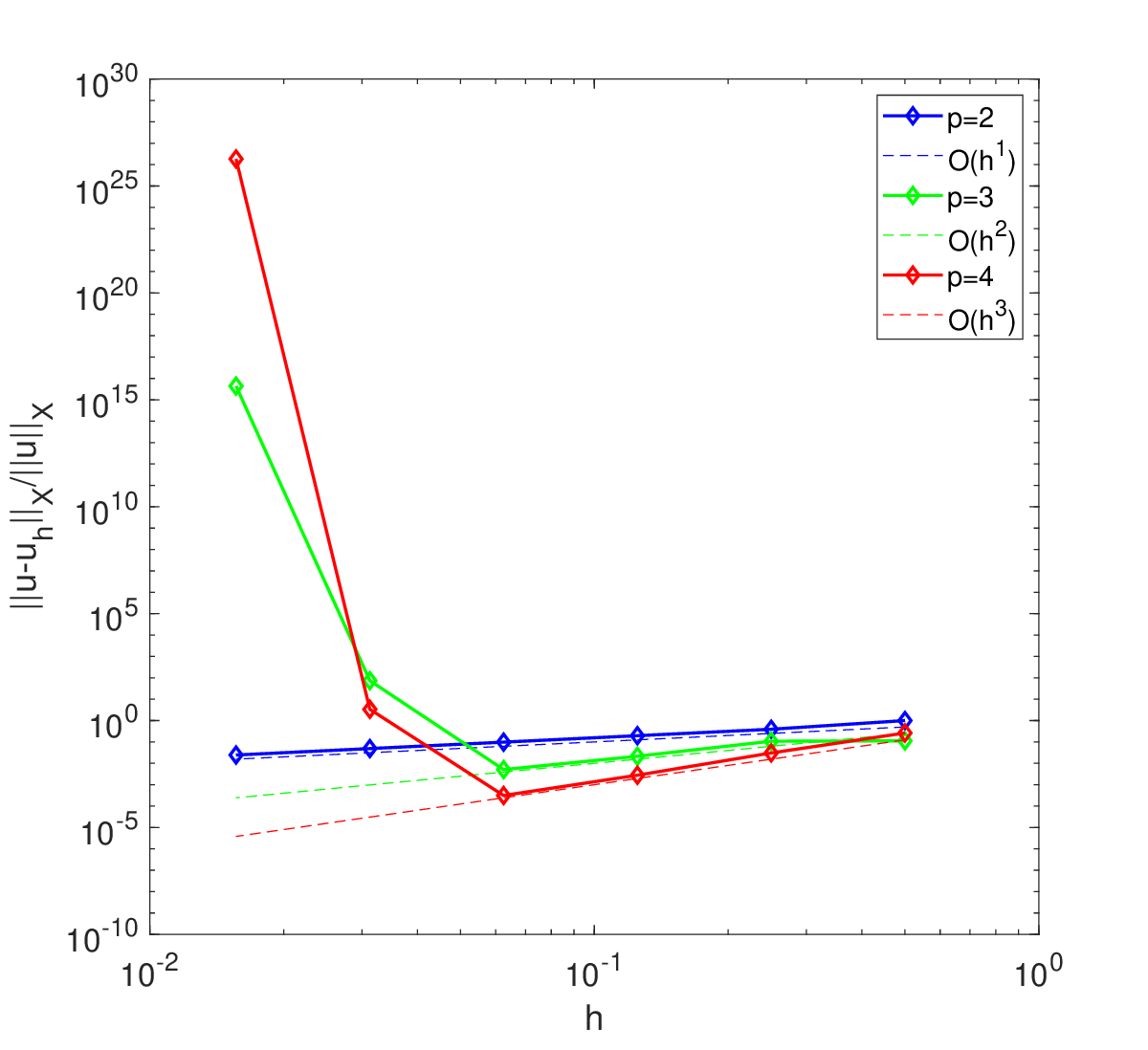}
		\caption[Caption]{}
	\end{subfigure}
	
	\caption{Relative errors in (a) $L^2(L^2(\Omega))$ norm, (b) $L^2(H^1_0(\Omega))\cap H^1(L^2(\Omega))$ norm and (c) $X$ norm for the FEM stabilization method with splines of maximum regularity in space and $C^{p-2}$ regularity in time direction for Square domain.}
	\label{femstabc_p_2}
\end{figure}

\begin{figure}[htbp]
	\begin{subfigure}{0.3\textwidth}
		\includegraphics[height=6cm,width=5cm]{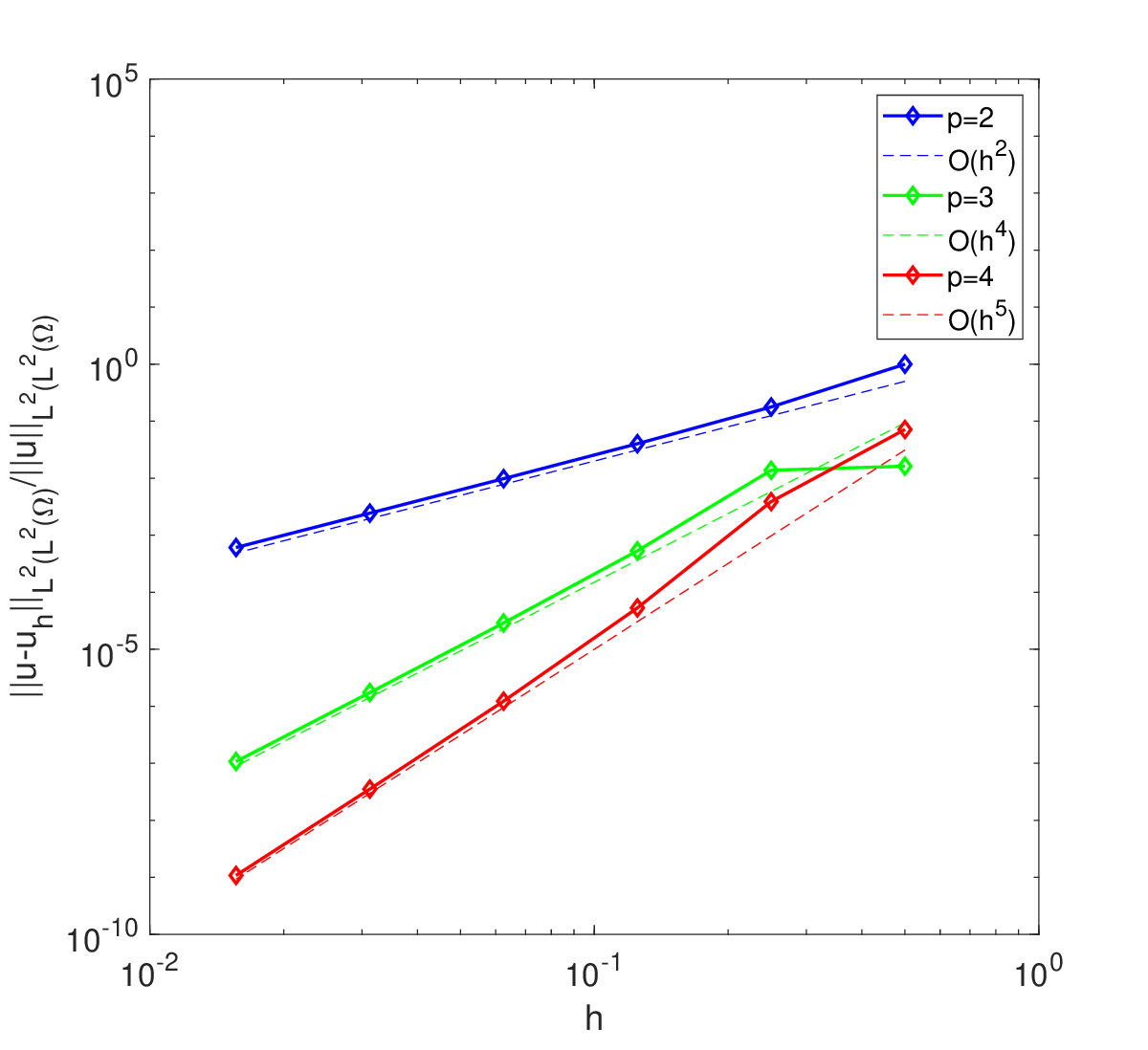}
		\caption[Caption]{}
		\label{}
	\end{subfigure}
	\hspace{0.55cm}
	\begin{subfigure}{0.3\textwidth}
		\includegraphics[height=6cm,width=5cm]{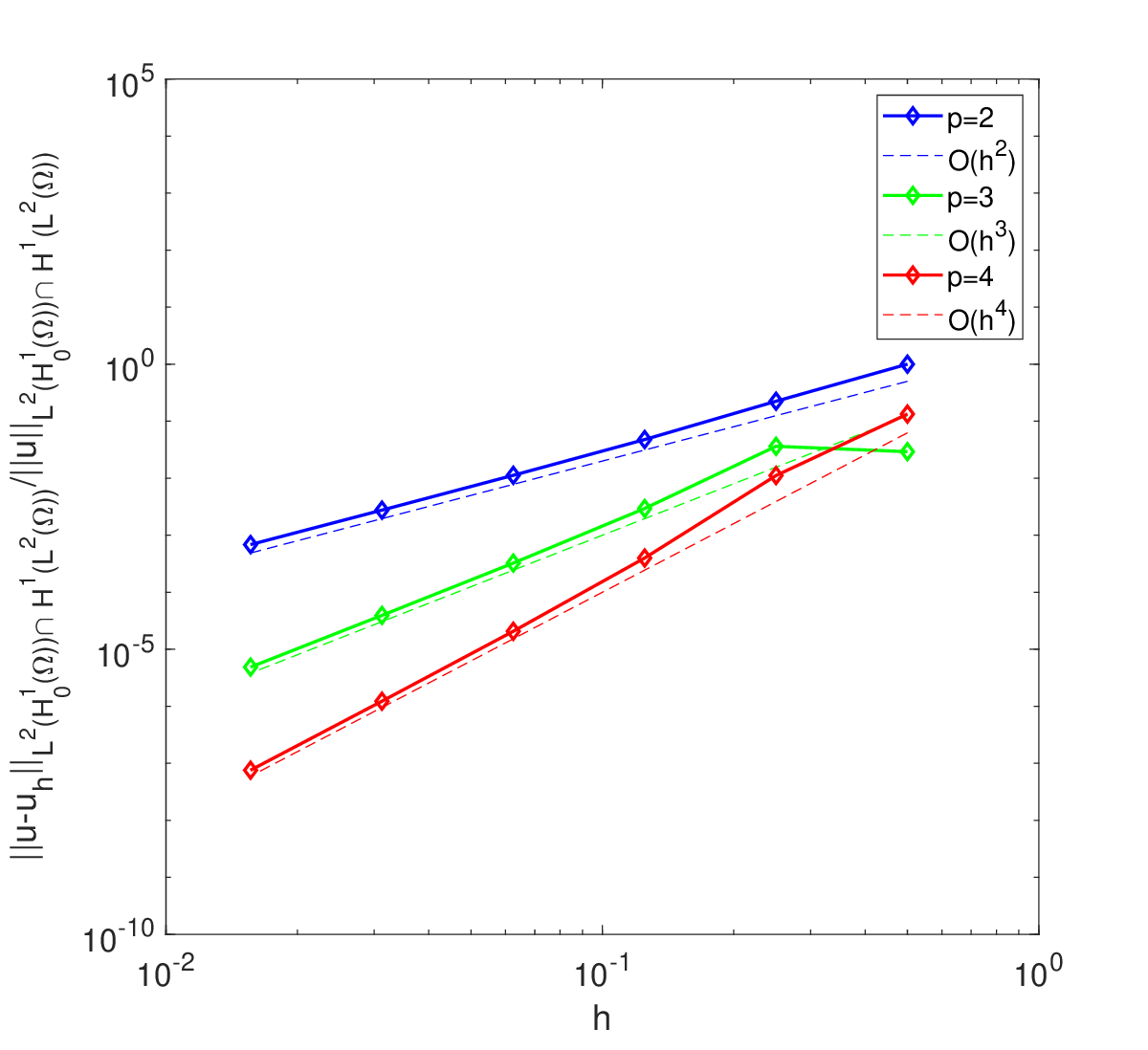}
		\caption[Caption]{}
	\end{subfigure}
	\hspace{0.55cm}
	\begin{subfigure}{0.3\textwidth}
		\includegraphics[height=6cm,width=5cm]{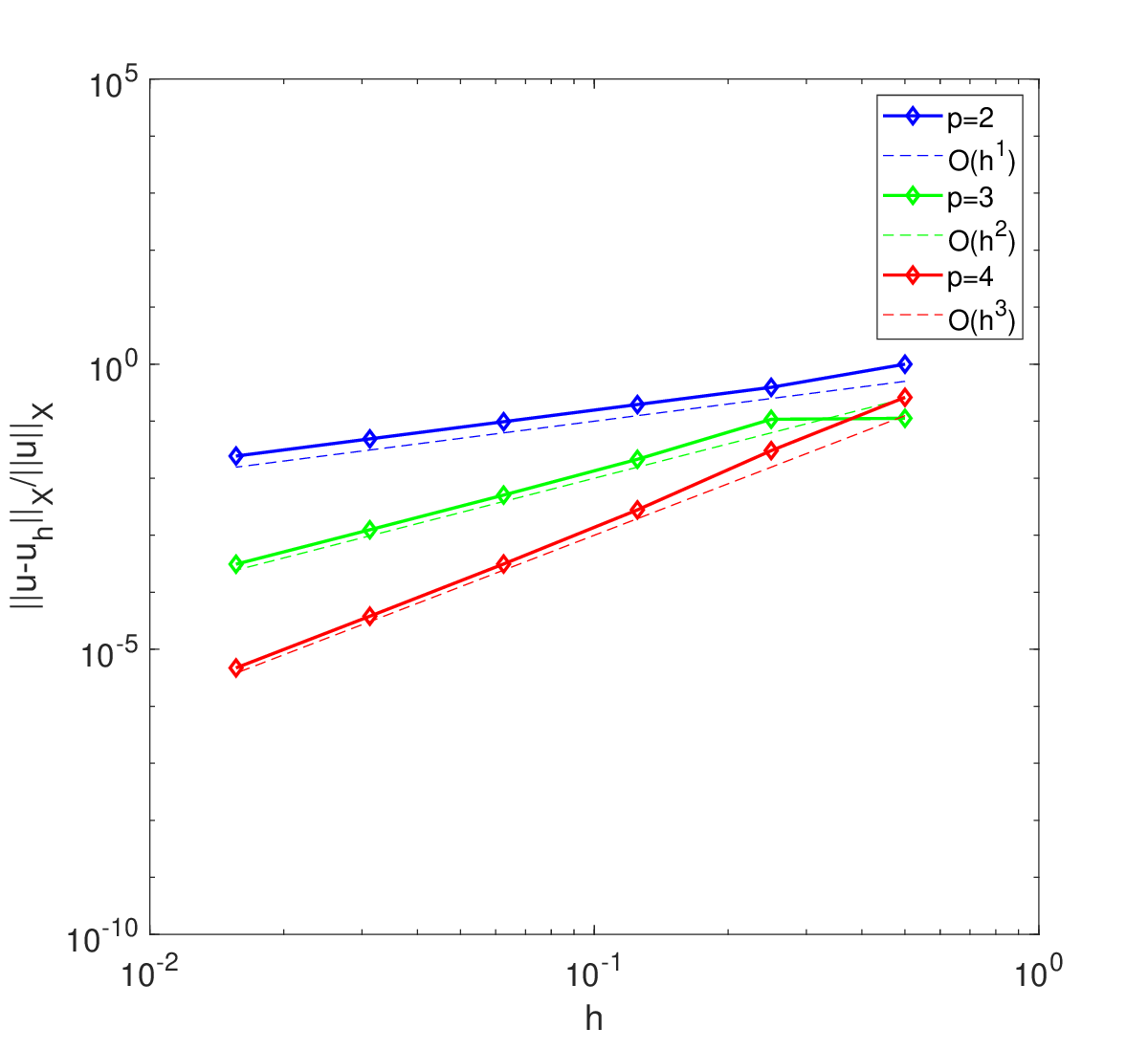}
		\caption[Caption]{}
	\end{subfigure}
	
	\caption{Relative errors in (a) $L^2(L^2(\Omega))$ norm, (b) $L^2(H^1_0(\Omega))\cap H^1(L^2(\Omega))$ norm and (c) $X$ norm for the FEM stabilization method with splines of maximum regularity in space and $C^0$ regularity in time direction for Square domain.}
	\label{femstabc0}
\end{figure}

\begin{figure}[htbp]
	\begin{subfigure}{0.3\textwidth}
		\includegraphics[height=6cm,width=5cm]{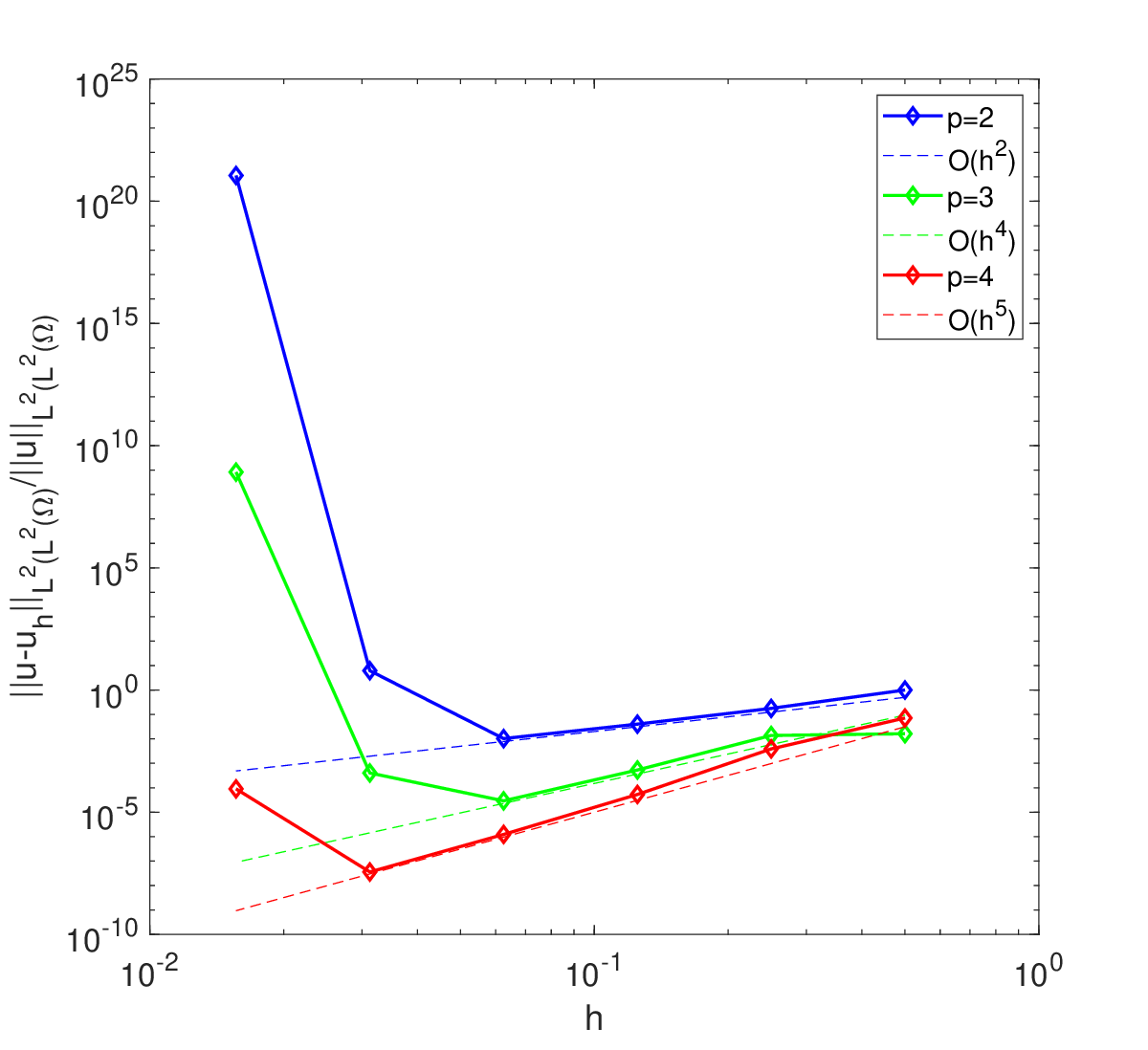}
		\caption[Caption]{}
		\label{}
	\end{subfigure}
	\hspace{0.55cm}
	\begin{subfigure}{0.3\textwidth}
		\includegraphics[height=6cm,width=5cm]{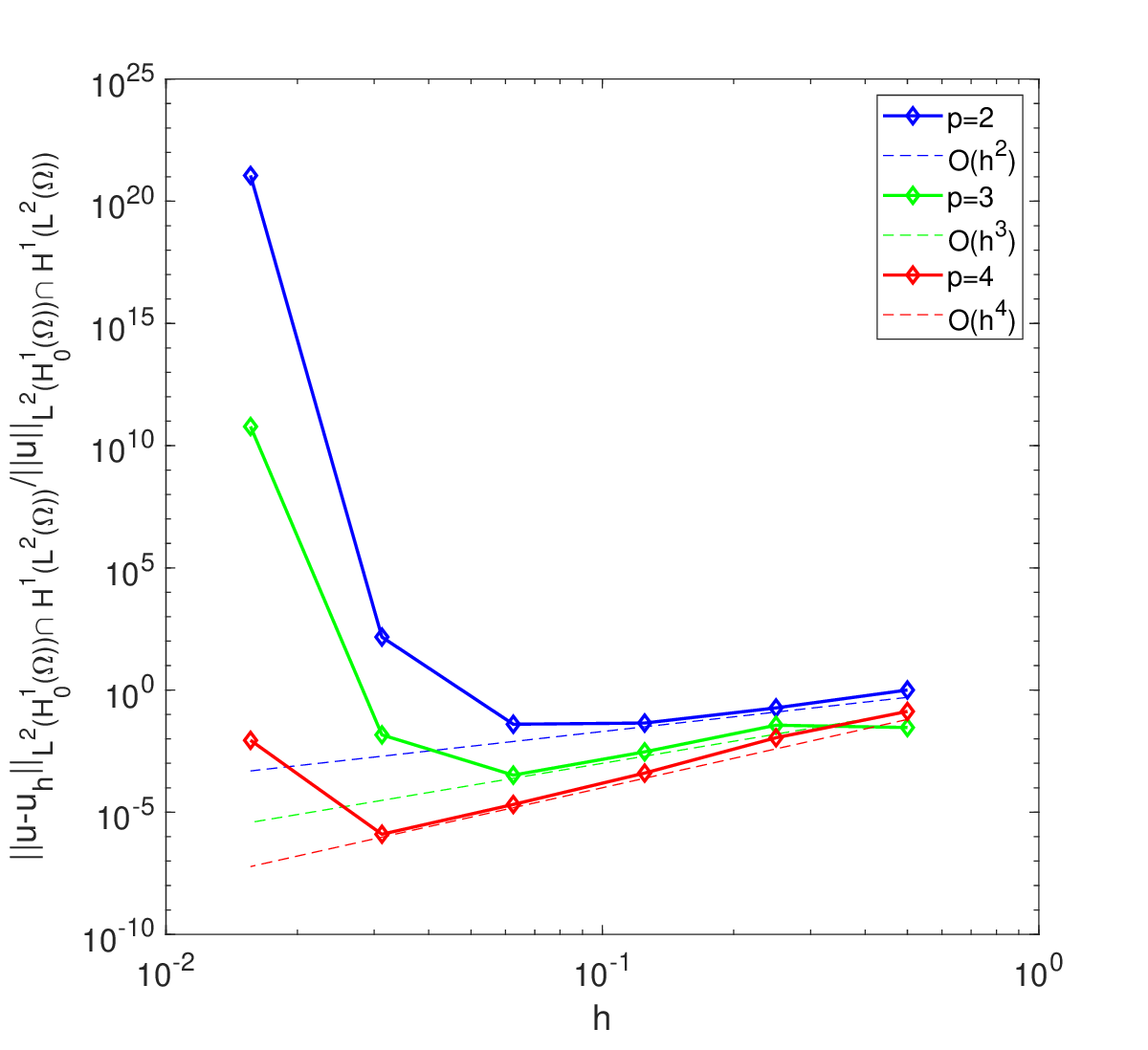}
		\caption[Caption]{}
	\end{subfigure}
	\hspace{0.55cm}
	\begin{subfigure}{0.3\textwidth}
		\includegraphics[height=6cm,width=5cm]{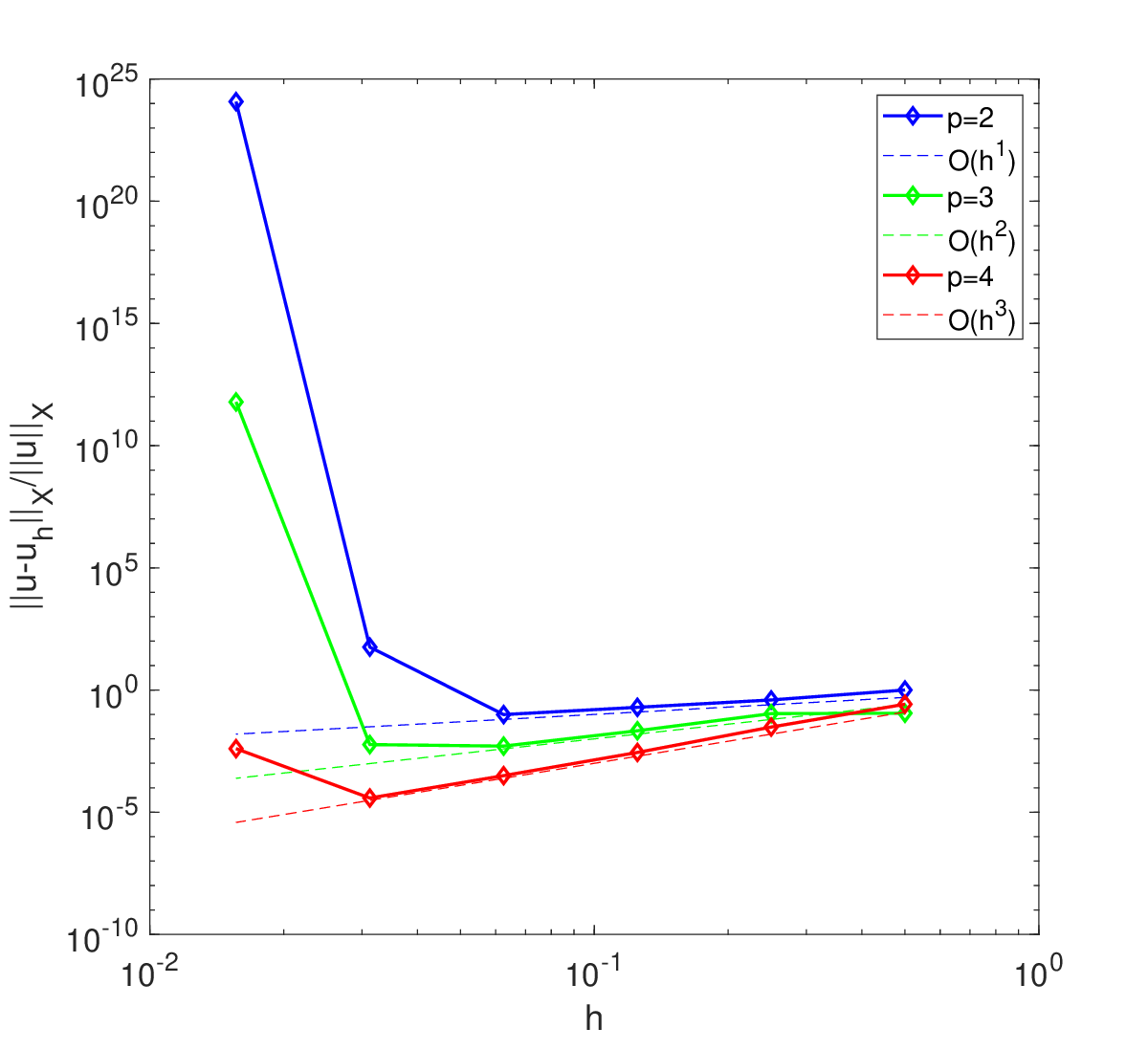}
		\caption[Caption]{}
	\end{subfigure}
	
	\caption{Relative errors in (a) $L^2(L^2(\Omega))$ norm, (b) $L^2(H^1_0(\Omega))\cap H^1(L^2(\Omega))$ norm and (c) $X$ norm for the without stabilization method with splines of maximum regularity in both space and time direction for Square domain.}
	\label{nostabmax}
\end{figure}
\begin{figure}[htbp]
	\begin{subfigure}{0.3\textwidth}
		\includegraphics[height=6cm,width=5cm]{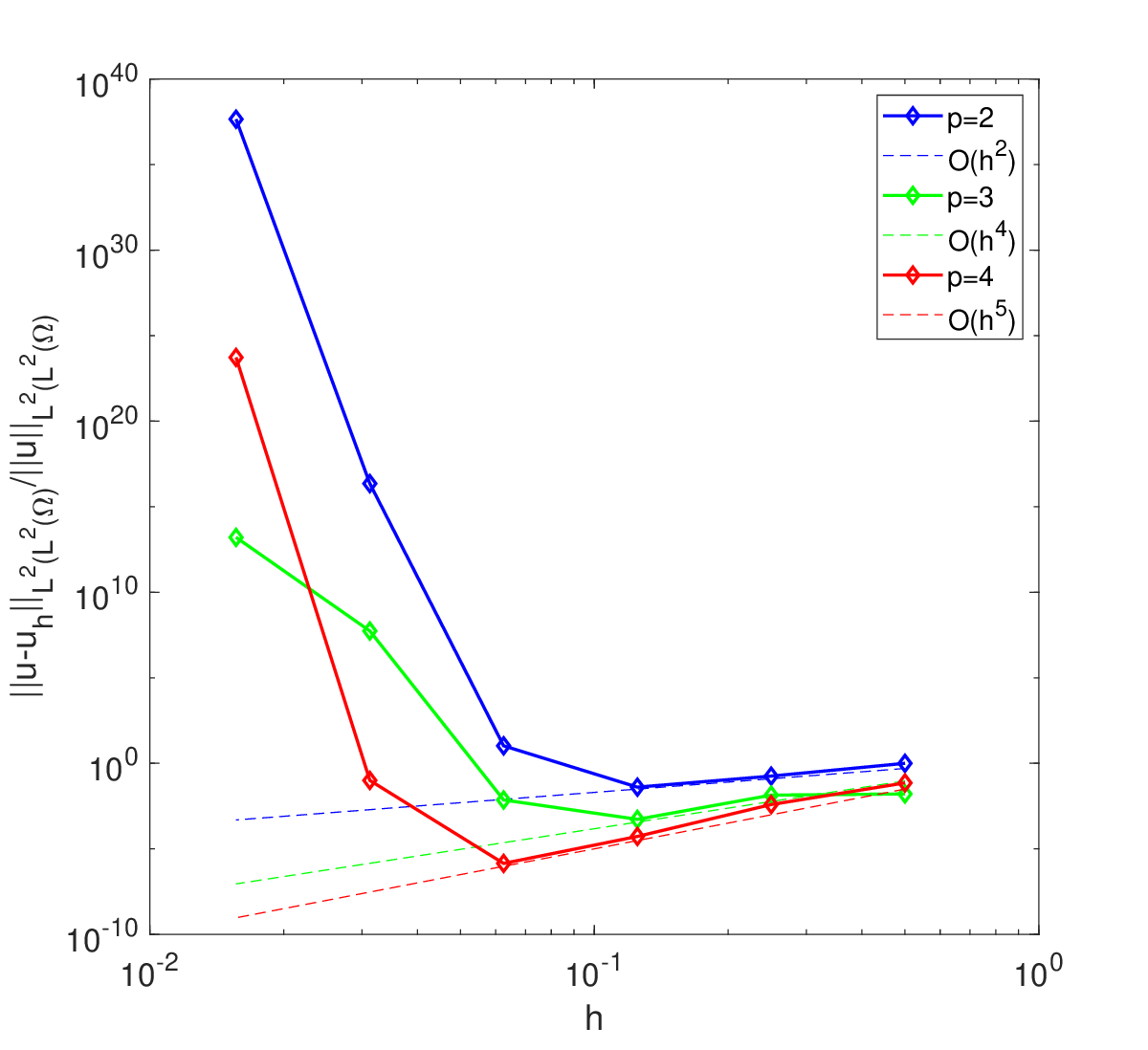}
		\caption[Caption]{}
		\label{}
	\end{subfigure}
	\hspace{0.55cm}
	\begin{subfigure}{0.3\textwidth}
		\includegraphics[height=6cm,width=5cm]{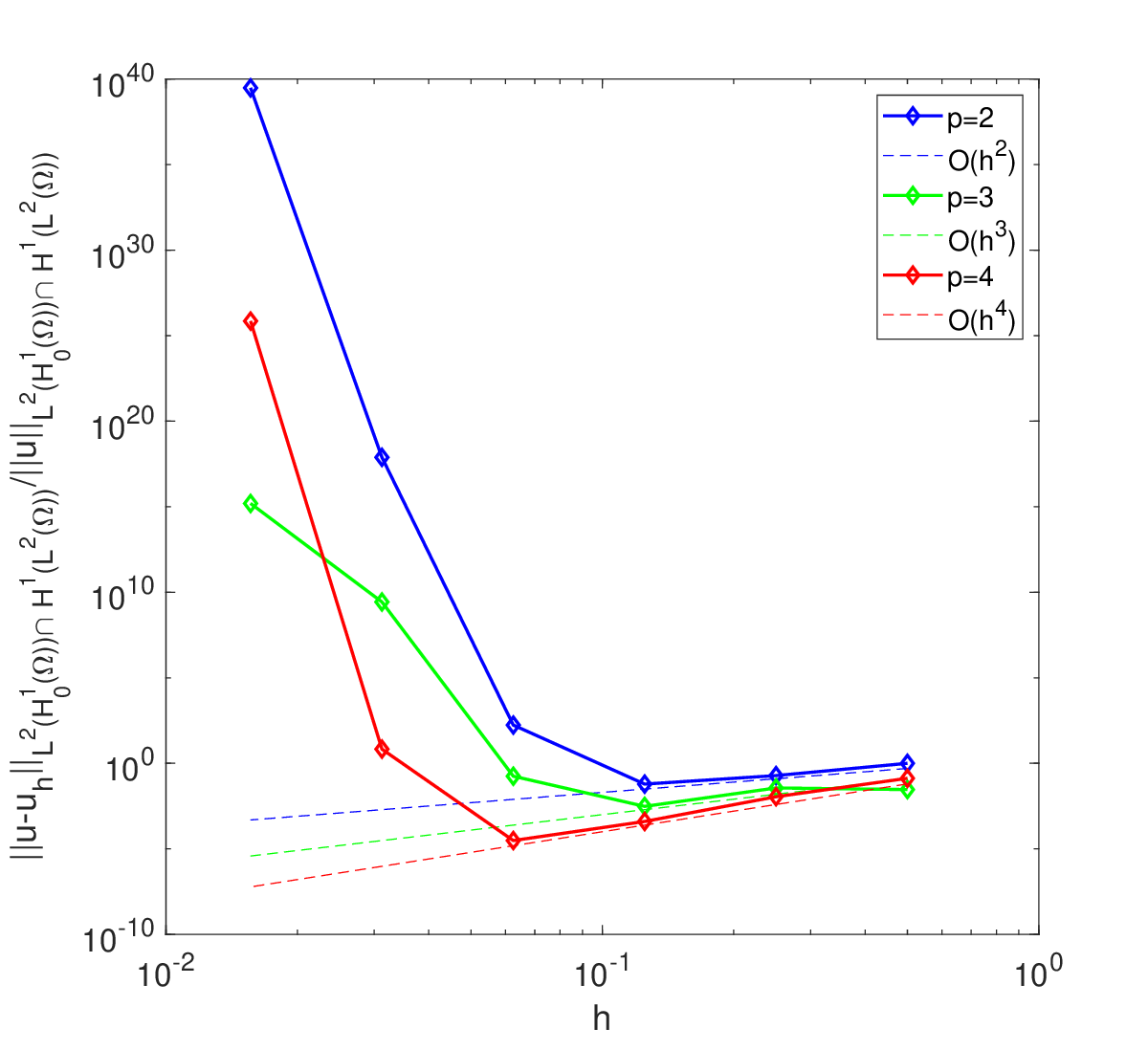}
		\caption[Caption]{}
	\end{subfigure}
	\hspace{0.55cm}
	\begin{subfigure}{0.3\textwidth}
		\includegraphics[height=6cm,width=5cm]{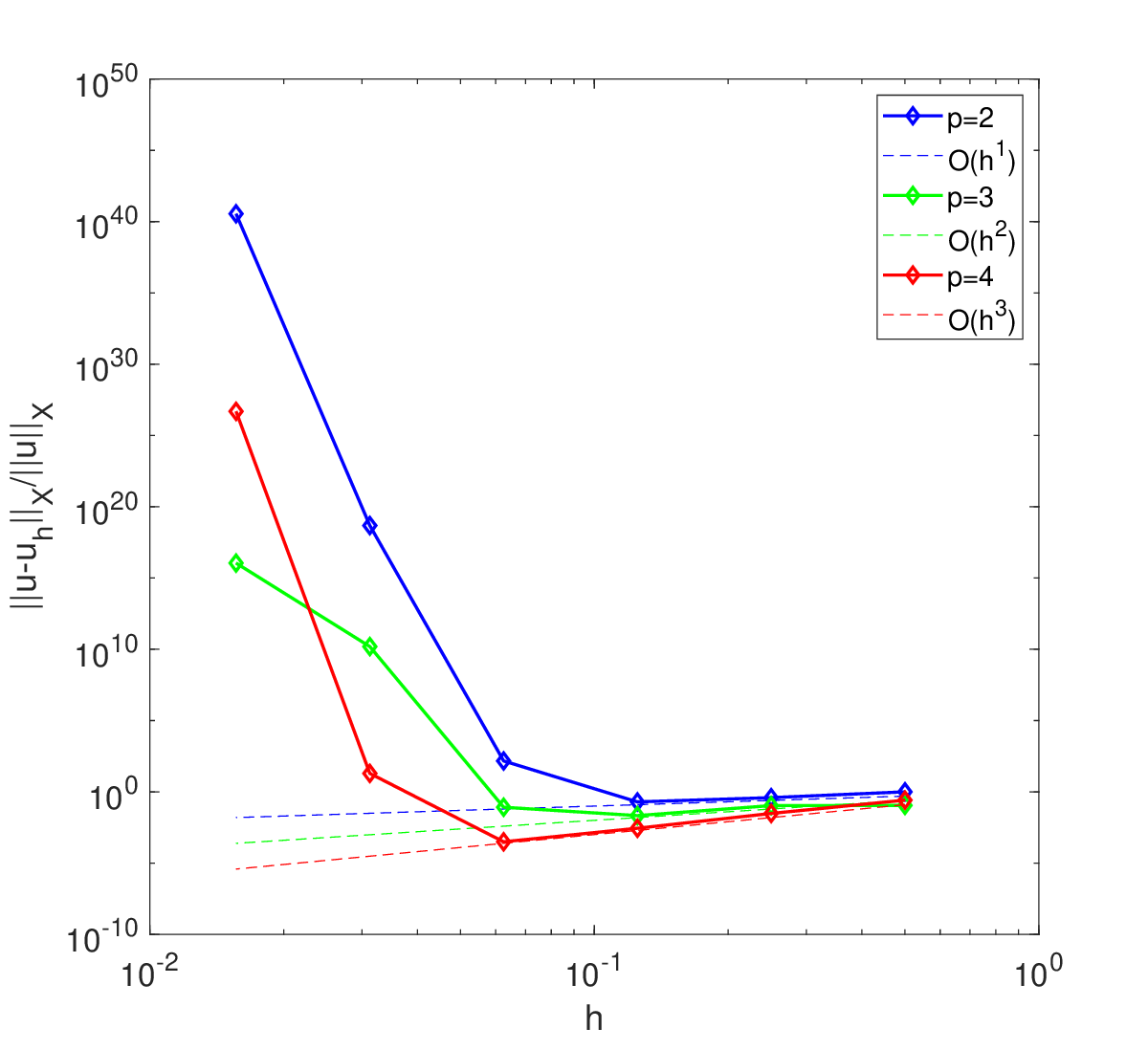}
		\caption[Caption]{}
	\end{subfigure}
	
	\caption{Relative errors in (a) $L^2(L^2(\Omega))$ norm, (b) $L^2(H^1_0(\Omega))\cap H^1(L^2(\Omega))$ norm and (c) $X$ norm for the without stabilization method with splines of maximum regularity in space and $C^{p-2}$ regularity in time direction for Square domain.}
	\label{nostabCP_P_2}
\end{figure}

\begin{figure}[htbp]
	\begin{subfigure}{0.3\textwidth}
		\includegraphics[height=6cm,width=5cm]{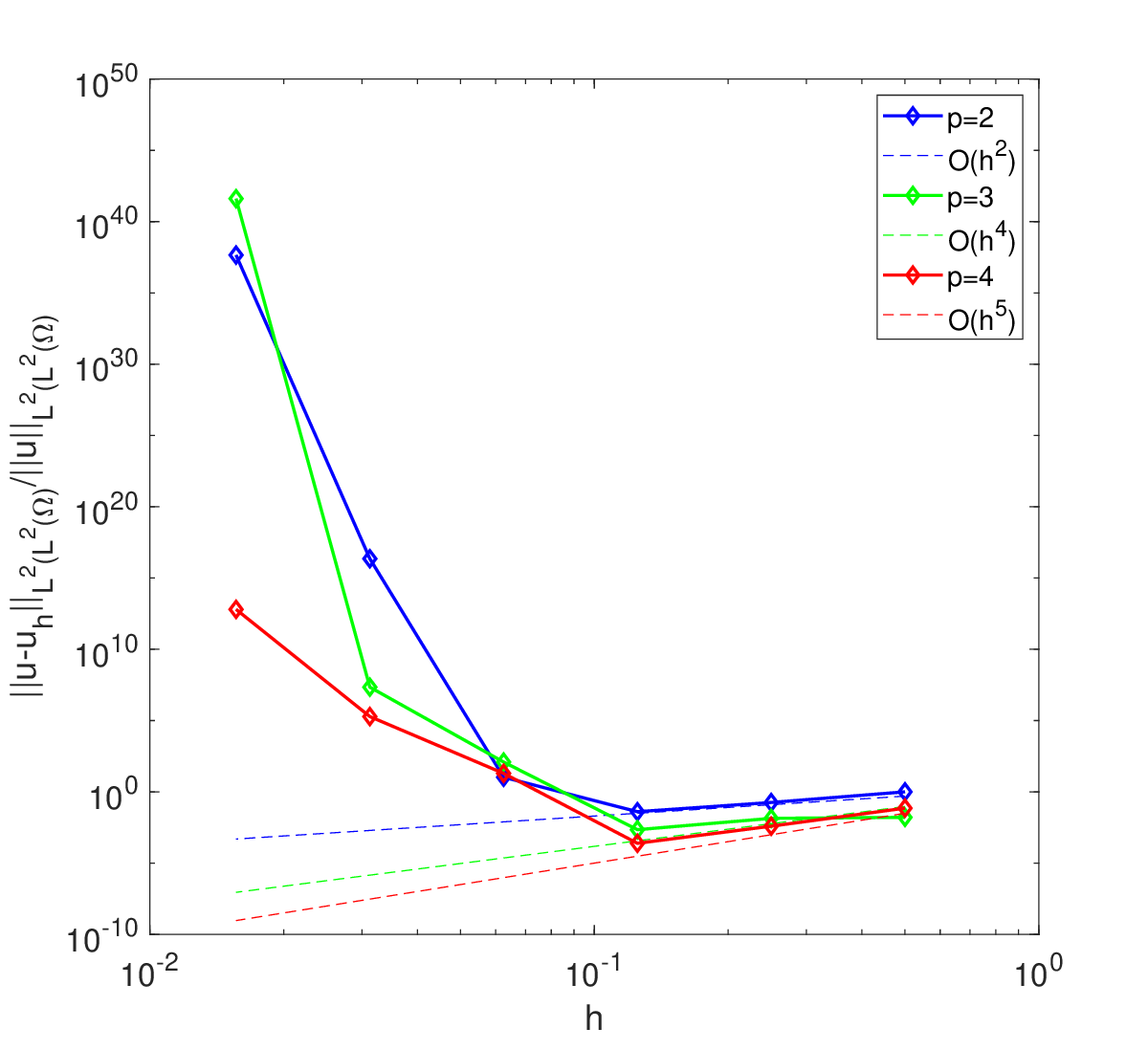}
		\caption[Caption]{}
		\label{}
	\end{subfigure}
	\hspace{0.55cm}
	\begin{subfigure}{0.3\textwidth}
		\includegraphics[height=6cm,width=5cm]{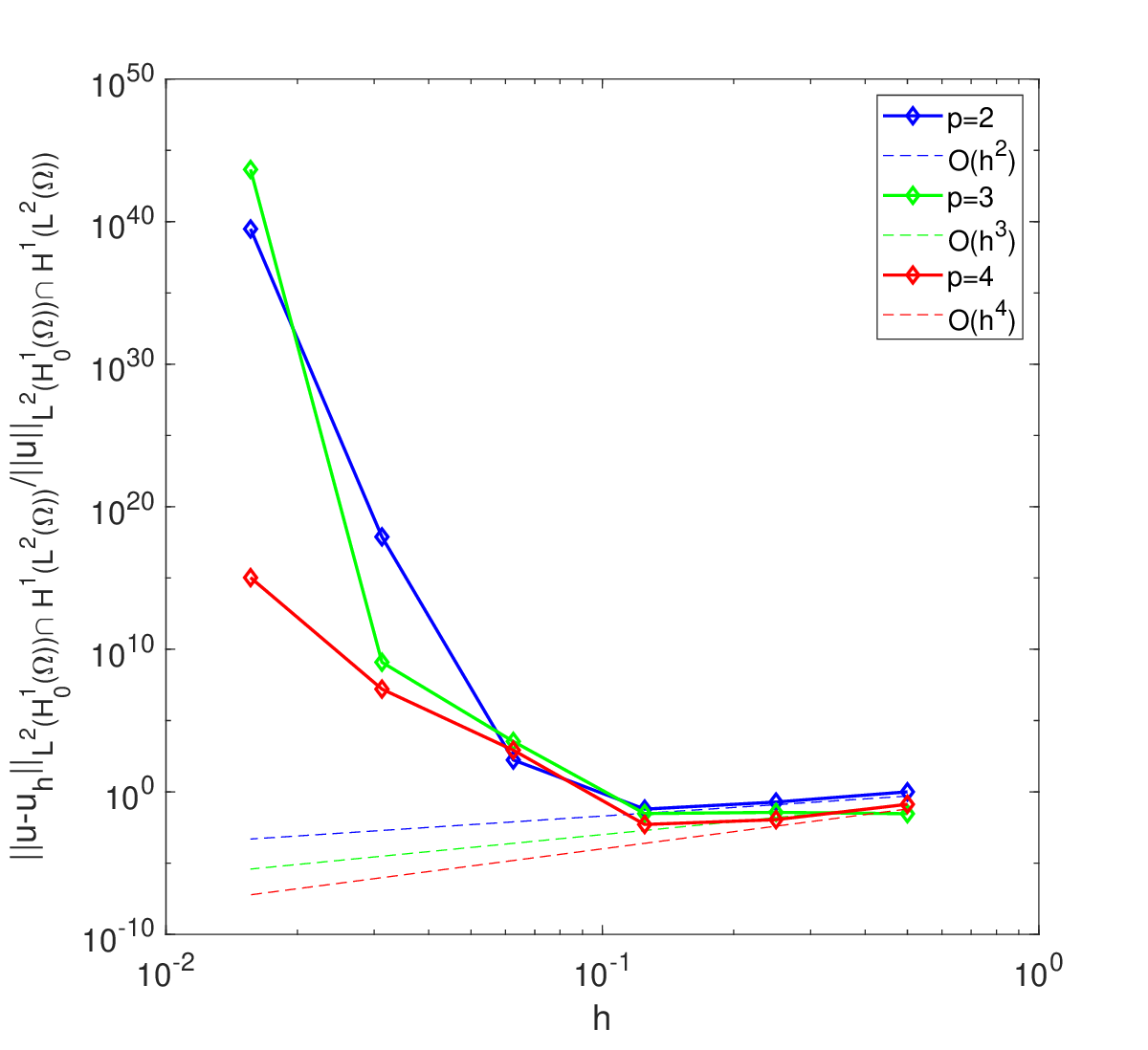}
		\caption[Caption]{}
	\end{subfigure}
	\hspace{0.55cm}
	\begin{subfigure}{0.3\textwidth}
		\includegraphics[height=6cm,width=5cm]{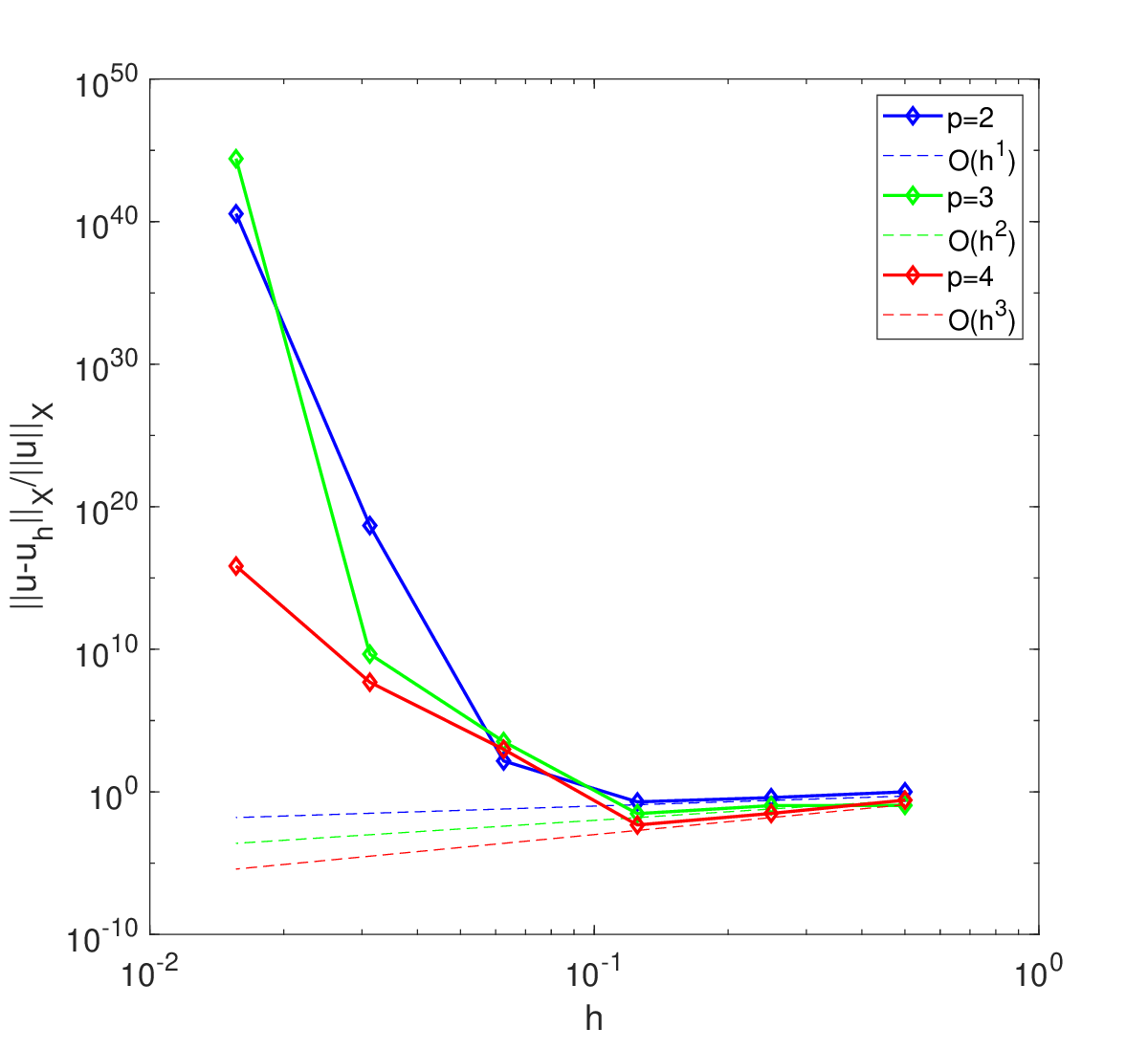}
		\caption[Caption]{}
	\end{subfigure}
	
	\caption{Relative errors in (a) $L^2(L^2(\Omega))$ norm, (b) $L^2(H^1_0(\Omega))\cap H^1(L^2(\Omega))$ norm and (c) $X$ norm for the without stabilization method with splines of maximum regularity in space and $C^0$ regularity in time direction for Square domain.}
	\label{nostabC_0}
\end{figure}
\noindent{\textbf{Example 2. Performance of solver:}}
To evaluate the performance of solver proposed in Section \ref{sec:solver} for solving the linear system that raised from the IgA-stab formulation \eqref{stabwave}, here we perform one numerical experiment. We consider the problem \eqref{eqbiwave} on a non-trivial geometry namely a quarter of annulus domain which is displayed in Figure \ref{ring_domain}. We choose the problem data such that the exact solution is $u(x,y,t)=t^2x^2y^2(x^2+y^2-1)^2(x^2+y^2-4)^2$. In Table \ref{tablefortime}, we report the total computational time required for solving the linear system \eqref{stabms} using Algorithm \ref{al:solving} on meshes of size $h_s=h_t=h=\frac{1}{16},\ \frac{1}{32},\ \frac{1}{64},\ \frac{1}{128},\ \frac{1}{256}$ with splines of degree $p=2,\ 3,\ 4.$ From Table \ref{tablefortime}, we observe that for $h=\frac{1}{256}$, the computational time grows with factors 11.47, 14.02 and 13.52 for polynomial degrees 2, 3 and 4 respectively, which are smaller than the factor 16 resulting from the complexity $\mathcal{O}(N_{dof}^{4/3})$. The results are in agreement with the theoretical computational cost discussed in Section \ref{sec:solver}.
	\begin{table}[h]
			\caption{Computational time for ring shaped domain}
			\label{tablefortime}
	\centering
	\begin{tabular}{ccccccc}
	
		\toprule
		\multicolumn{1}{c}{} & \multicolumn{2}{c}{$p=2$}&\multicolumn{2}{c}{$p=3$}&\multicolumn{2}{c}{$p=4$} \\
		\cmidrule(lr){2-3} \cmidrule(lr){4-5} \cmidrule{6-7}
		$h$ & $N_{dofs}$ &  Time(s)  & $N_{dofs}$ &  Time(s)  & $N_{dofs}$ & Time(s) \\
		\midrule
		$\frac{1}{16}$ & 3332 & 0.0713 & 4050&0.0492&4864 &0.0262\\
		$\frac{1}{32}$ & 29700 &0.2614& 32674& 0.5062&35840& 0.7103\\
		$\frac{1}{64}$ & 249860 & 2.9349 & 261954&4.9037&274432& 9.4208\\
		$\frac{1}{128}$ & 2048004 & 32.9743& 2096770&57.2154&2146304&1.0048e+02 \\
		$\frac{1}{256}$ & 16580612 & 3.7833e+02& 16776450& 8.0257e+02&16973824&1.3591e+03 \\
		\hline 
	\end{tabular}
\end{table}\\\\
\noindent{\textbf{Example 3. Accuracy of the IgA-stab formulation:}}
In this final example, we compare the performance of the proposed IgA-stab formulation \eqref{stabwave} with the FEM-stab formulation \eqref{stabwavefem} for the biharmonic wave equation \eqref{eqbiwave}. We consider the one-dimensional spatial domain $\Omega = (0,1)$ with a manufactured solution
\begin{equation*}
    u(x,t)=t^2\sin^2(\pi x).
\end{equation*}
We obtain the numerical solution of \eqref{eqbiwave} using the IgA-stab formulation \eqref{stabwave} for splines of maximal regularity and FEM-stab formulation \eqref{stabwavefem} for splines of $C^1$ regularity in space and $C^0$ regularity in time with splines of degree $p=2,\ 3,\ 4,\ 5$. To ensure a fair comparison for each polynomial degree, we fix the total number of degrees of freedom to $N_{dof} = 8400$ for both methods. This is achieved by appropriately choosing the mesh sizes $h_s$ and $h_t$ for each $p$ which satisfy $h_t\approx h_s$. In Figure \ref{compare}, we display the relative errors in $L^2(\Omega)$, $H^1_0(\Omega)$ and $H^2_0(\Omega)$ norm obtained at final-time $T$. From Figure \ref{compare}, we observe that for all polynomial degrees $p \geq 2$ and across all three norms, the IgA-stab formulation yields smaller errors than the FEM-stab formulation. This advantage becomes more pronounced as $p$ increases, demonstrating that the proposed IgA-stab formulation \eqref{stabwave} achieves better accuracy per degree of freedom compared to the FEM-stab formulation \eqref{stabwavefem}.
\begin{figure}[htbp]
	\begin{subfigure}{0.3\textwidth}
		\includegraphics[height=6cm,width=5cm]{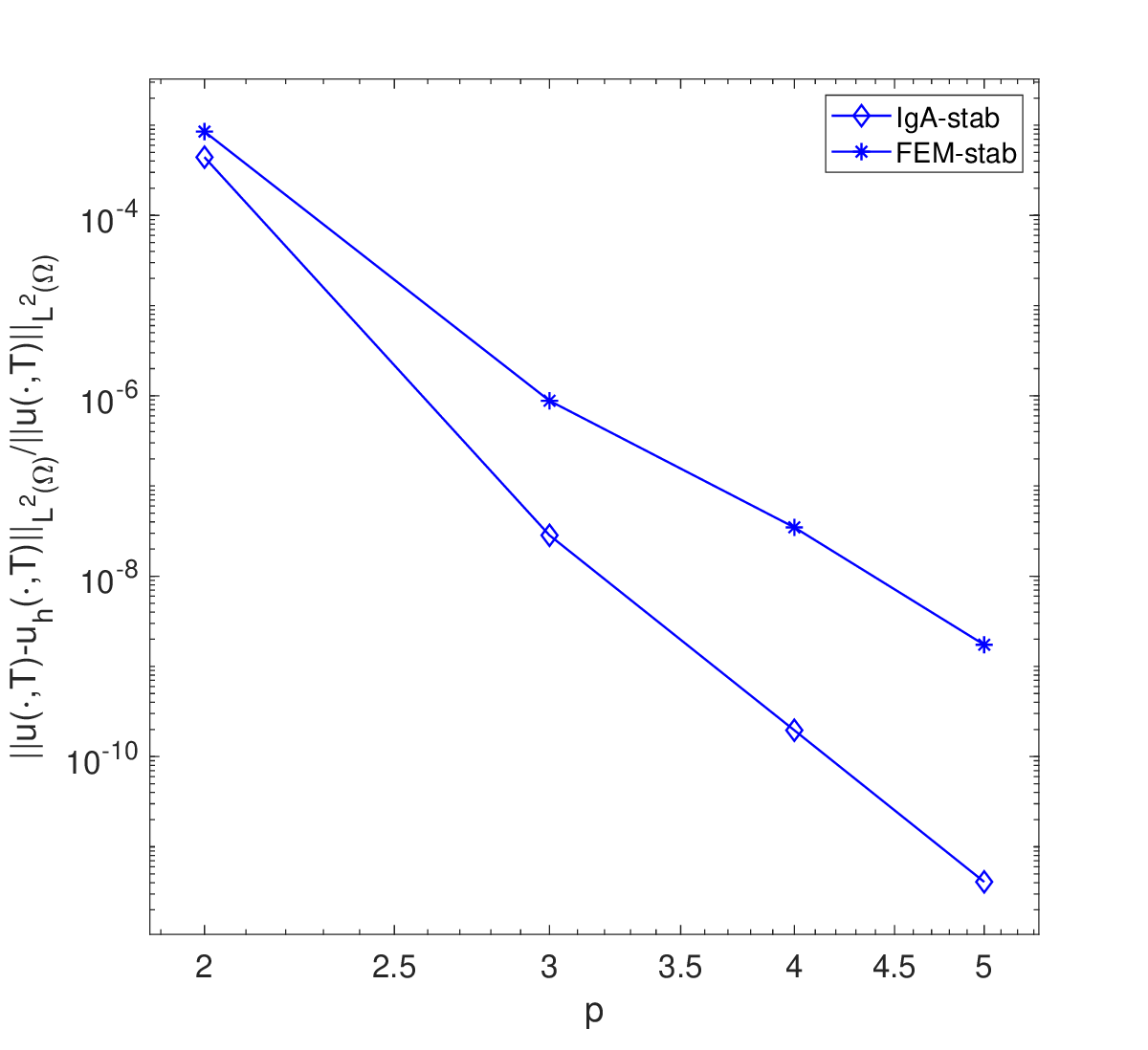}
		\caption[Caption]{}
		\label{}
	\end{subfigure}
	\hspace{0.55cm}
	\begin{subfigure}{0.3\textwidth}
		\includegraphics[height=6cm,width=5cm]{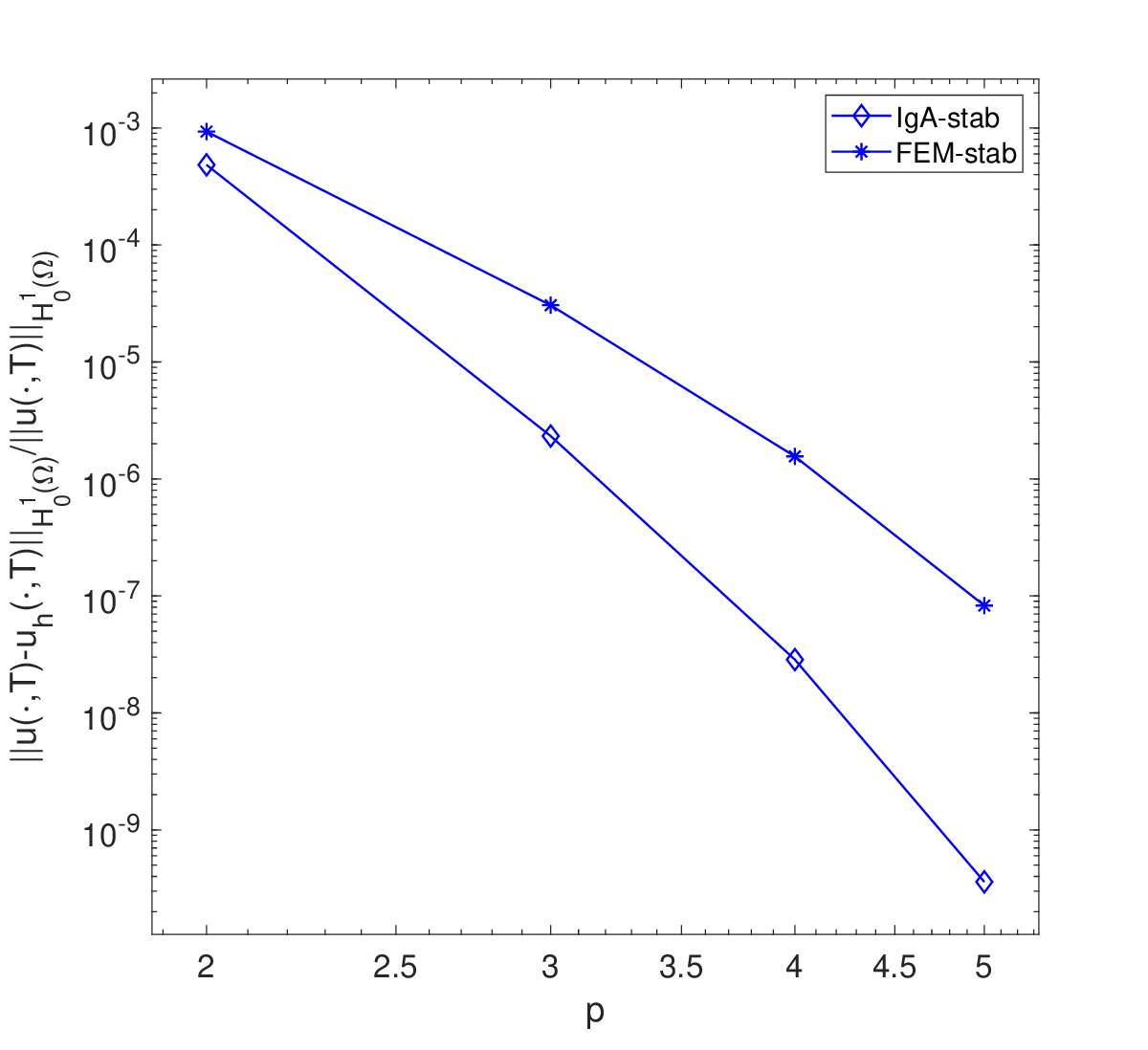}
		\caption[Caption]{}
	\end{subfigure}
	\hspace{0.55cm}
	\begin{subfigure}{0.3\textwidth}
		\includegraphics[height=6cm,width=5cm]{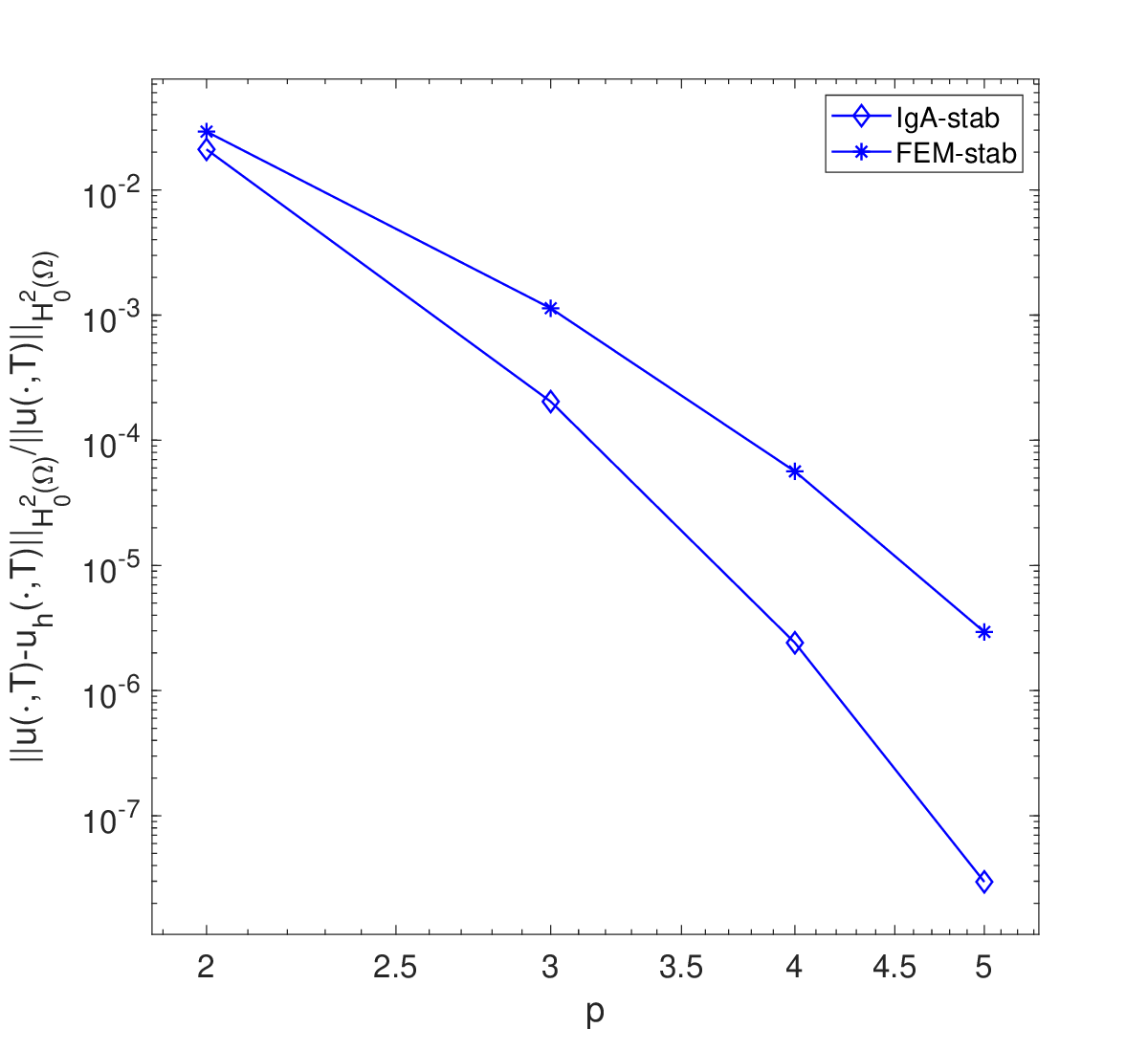}
		\caption[Caption]{}
	\end{subfigure}
	
	\caption{Relative errors in (a) $L^2(\Omega)$ norm, (b) $H^1_0(\Omega)$ norm and (c) $H^2_0(\Omega)$ norm for the IgA stabilization and FEM stabilization.}
	\label{compare}
\end{figure}

 \section{Conclusions}\label{sec:conclusions}
In this work, we have analyzed the space-time isogeometric method for biharmonic wave equation \eqref{eqbiwave}. We proved the well-posedness of the space-time variational formulation when $f\in L^2(L^2(\Omega))$ by exploiting the Fourier expansion of the trial and test functions. By considering the smooth spline functions, we obtained a conforming space-time isogeometric discretization of \eqref{eqbiwave}. Upon analyzing the space-time discrete variational formulation, we observed that the discrete scheme is table if and only if the mesh sizes satisfy a CFL condition. To overcome this restriction, we proposed an unconditionally stable space-time isogeometric discretization of \eqref{eqbiwave}, inspired by the work of Fraschini et al. \cite{Fraschini2024}. Additionally, we also provided an efficient solver to solve the matrix system raised from the space-time discrete formulation \eqref{stabwave}. Our numerical results shows that, when considering the maximal regularity splines in temporal direction, the space-time isogeometric discrete formulation \eqref{stabwave} is stable in the absence of any CFL condition.  Moreover, the proposed solver is numerically efficient as it is seen in the numerical test , the computational cost is less than $\mathcal{O}(N_{dof}^{4/3})$. Also, the proposed stabilized space-time isogeometric formulation is more accurate than the stabilized space-time FEM formulation per number of degrees of freedom. Though our proposed space-time isogeometric discrete formulation \eqref{stabwave} is unconditionally stable, the existence-uniqueness results and the theoretical convergent estimates are yet to be discovered. Additionally, we observed in our numerical experiments, that the stabilized space-time isogeometric  formulation \eqref{stabwave} and the stabilized space-time FEM discrete formulation \eqref{stabwavefem} are stable respectively for splines of maximal and $C^0$ regularity in temporal direction. For the intermediate case, i.e. for splines of regularity $C^{k}, 1\leq k\leq p-2$, we observed that the formulations \eqref{stabwave} and \eqref{stabwavefem} show instability. Hence, the enhancement of the proposed stabilized formulation for this intermediate case is left as a future work. \\\\
 {\noindent{\bf Acknowledgment:}} The authors gratefully acknowledge Dr. Monica Montardini for her valuable discussions and insightful suggestions regarding Section \ref{sec:solver}. The first author would also like to thank the Department of Science and Technology (DST), New Delhi, India, for providing the INSPIRE Fellowship that supported this research. The second author acknowledges financial support from National Board for Higher Mathematics (NBHM) India, under grant no. 02011/43/2025/NBHM(R.P)/R$\&$D II/16568.\\\\
\FloatBarrier
	\bibliographystyle{unsrt}
\bibliography{first_draft_ref.bib}

@article{thinplates,
	title = {Analytical solutions for {E}uler–{B}ernoulli beam on visco‐elastic foundation subjected to moving load},
	volume = {37},
	ISSN = {1096-9853},
	url = {http://dx.doi.org/10.1002/nag.1135},
	DOI = {10.1002/nag.1135},
	number = {8},
	journal = {International Journal for Numerical and Analytical Methods in Geomechanics},
	publisher = {Wiley},
	author = {Basu,  D. and Kameswara Rao,  N. S. V.},
	year = {2012},
	pages = {945–960}
}

@article{vibrationbeam,
	author = {Li, B. and Fairweather, G. and Bialecki, B.},
	title = {Discrete-Time Orthogonal Spline Collocation Methods for Vibration Problems},
	journal = {SIAM Journal on Numerical Analysis},
	volume = {39},
	number = {6},
	pages = {2045-2065},
	year = {2002},
	doi = {10.1137/S0036142900348729},
}

@article{waterwaves,
	 title={Resonant interactions between waves. {T}he case of discrete oscillations},
	  volume={20},
	   DOI={10.1017/S0022112064001355},
	    number={3}, 
	    journal={Journal of Fluid Mechanics}, author={Bretherton, F. P.},
	     year={1964}, pages={457–479}
}

@article{LAZER1987243,
	title = {Large scale oscillatory behaviour in loaded asymmetric systems},
	journal = {Annales de l'Institut Henri Poincaré C, Analyse non linéaire},
	volume = {4},
	number = {3},
	pages = {243-274},
	year = {1987},
	issn = {0294-1449},
	doi = {https://doi.org/10.1016/S0294-1449(16)30368-7},
	url = {https://www.sciencedirect.com/science/article/pii/S0294144916303687},
	author = {A.C. Lazer and P.J. Mckenna},
}

@article{morley,
	title = {Morley {F}{E}{M} for the fourth-order nonlinear reaction-diffusion problems},
	journal = {Computers \& Mathematics with Applications},
	volume = {99},
	pages = {229-245},
	year = {2021},
	issn = {0898-1221},
	doi = {https://doi.org/10.1016/j.camwa.2021.08.010},
	url = {https://www.sciencedirect.com/science/article/pii/S0898122121002947},
	author = {P. Danumjaya and A. K. Pany and A. K. Pani},
}

@article {dGFEM,
	AUTHOR = {Georgoulis, E. H. and Virtanen, J. M.},
	TITLE = {Adaptive discontinuous {G}alerkin approximations to fourth
	order parabolic problems},
	FJOURNAL = {Mathematics of Computation},
	VOLUME = {84},
	YEAR = {2015},
	NUMBER = {295},
	PAGES = {2163--2190},
	ISSN = {0025-5718,1088-6842},
	MRCLASS = {65M60 (65M15 65M50)},
	MRNUMBER = {3356023},
	MRREVIEWER = {Manfred\ Dobrowolski},
	DOI = {10.1090/mcom/2936},
	URL = {https://doi.org/10.1090/mcom/2936},
}

@article {C0interior,
	AUTHOR = {Gudi, T. and Gupta, H. S.},
	TITLE = {A fully discrete {$C^0$} interior penalty {G}alerkin
	approximation of the extended {F}isher-{K}olmogorov equation},
	
	FJOURNAL = {Journal of Computational and Applied Mathematics},
	VOLUME = {247},
	YEAR = {2013},
	PAGES = {1--16},
	ISSN = {0377-0427,1879-1778},
	MRCLASS = {65M60 (65M12 65M15)},
	MRNUMBER = {3023298},
	MRREVIEWER = {Saulo\ Pomponet\ Oliveira},
	DOI = {10.1016/j.cam.2012.12.019},
	URL = {https://doi.org/10.1016/j.cam.2012.12.019},
}

@article{DAS202452,
	title = {A unified mixed finite element method for fourth-order time-dependent problems using biorthogonal systems},
	journal = {Computers \& Mathematics with Applications},
	volume = {165},
	pages = {52-69},
	year = {2024},
	issn = {0898-1221},
	doi = {https://doi.org/10.1016/j.camwa.2024.04.013},
	url = {https://www.sciencedirect.com/science/article/pii/S0898122124001718},
	author = {A. Das and B. P. Lamichhane and N. Nataraj},
}

@article{C1bihar,
	title = {${C}^1$-conforming variational discretization of the biharmonic wave equation},
	journal = {Computers \& Mathematics with Applications},
	volume = {119},
	pages = {208-219},
	year = {2022},
	issn = {0898-1221},
	doi = {https://doi.org/10.1016/j.camwa.2022.06.005},
	url = {https://www.sciencedirect.com/science/article/pii/S0898122122002498},
	author = {M. Bause and M. Lymbery and K. Osthues},
}

@article{HE20131,
	title = {Analysis of mixed finite element methods for fourth-order wave equations},
	journal = {Computers \& Mathematics with Applications},
	volume = {65},
	number = {1},
	pages = {1-16},
	year = {2013},
	issn = {0898-1221},
	doi = {https://doi.org/10.1016/j.camwa.2012.10.002},
	url = {https://www.sciencedirect.com/science/article/pii/S0898122112006037},
	author = {S. He and H. Li and Y. Liu},
}

@article{nataraj,
url = {https://doi.org/10.1515/cmam-2025-0002},
title = {Semi- and Fully-Discrete Analysis of Lowest-Order Nonstandard Finite Element Methods for the Biharmonic Wave Problem},
title = {},
author = {N. Nataraj and R. Ruiz-Baier and A. Yousuf},
pages = {921--948},
volume = {25},
number = {4},
journal = {Computational Methods in Applied Mathematics},
doi = {doi:10.1515/cmam-2025-0002},
year = {2025},
lastchecked = {2026-03-07}
}

@article{HE2023333,
	title = {An energy-conserving finite element method for nonlinear fourth-order wave equations},
	journal = {Applied Numerical Mathematics},
	volume = {183},
	pages = {333-354},
	year = {2023},
	issn = {0168-9274},
	doi = {https://doi.org/10.1016/j.apnum.2022.09.011},
	url = {https://www.sciencedirect.com/science/article/pii/S0168927422002513},
	author = {M. He and J. Tian and P. Sun and Z. Zhang},
	
}

@article{invineq2,
author = {Bazilevs, Y. and Beir\~{a}o da Veiga, L. and Cottrell, J. A. and Hughes, T. J. R. and Sangalli, G.},
title = {ISOGEOMETRIC ANALYSIS: APPROXIMATION, STABILITY AND ERROR ESTIMATES FOR h-REFINED MESHES},
journal = {Mathematical Models and Methods in Applied Sciences},
volume = {16},
number = {07},
pages = {1031-1090},
year = {2006},
}

@article{TAO2021113230,
	title = {A discontinuous {G}alerkin method and its error estimate for nonlinear fourth-order wave equations},
	journal = {Journal of Computational and Applied Mathematics},
	volume = {386},
	pages = {113230},
	year = {2021},
	issn = {0377-0427},
	doi = {https://doi.org/10.1016/j.cam.2020.113230},
	url = {https://www.sciencedirect.com/science/article/pii/S0377042720305215},
	author = {Q. Tao and Y. Xu and C. Shu},
	
}

@article{IgAIntro,
	title = {Isogeometric analysis:{ C}{A}{D}, finite elements, {N}{U}{R}{B}{S}, exact geometry and mesh refinement},
	journal = {Computer Methods in Applied Mechanics and Engineering},
	volume = {194},
	number = {39},
	pages = {4135-4195},
	year = {2005},
	issn = {0045-7825},
	author = {T. J. R. Hughes and J. A. Cottrell and Y. Bazilevs},
}

@article{IgAhigh,
	title = {Isogeometric Analysis and error estimates for high order partial differential equations in fluid dynamics},
	journal = {Computers \& Fluids},
	volume = {102},
	pages = {277-303},
	year = {2014},
	issn = {0045-7930},
	doi = {https://doi.org/10.1016/j.compfluid.2014.07.002},
	url = {https://www.sciencedirect.com/science/article/pii/S0045793014002813},
	author = {A. Tagliabue and L. Dedè and A. Quarteroni},
}

@article{Sogn2023,
	title = {Multigrid solvers for isogeometric discretizations of the second biharmonic problem},
	volume = {33},
	ISSN = {1793-6314},
	url = {http://dx.doi.org/10.1142/S0218202523500422},
	DOI = {10.1142/s0218202523500422},
	number = {09},
	journal = {Mathematical Models and Methods in Applied Sciences},
	publisher = {World Scientific Pub Co Pte Ltd},
	author = {Sogn,  J. and Takacs,  S.},
	year = {2023},
	pages = {1803–1828}
}

@article{MANNI2023116314,
	title = {Outlier-free spline spaces for isogeometric discretizations of biharmonic and polyharmonic eigenvalue problems},
	journal = {Computer Methods in Applied Mechanics and Engineering},
	volume = {417},
	pages = {116314},
	year = {2023},
	note = {A Special Issue in Honor of the Lifetime Achievements of T. J. R. Hughes},
	issn = {0045-7825},
	doi = {https://doi.org/10.1016/j.cma.2023.116314},
	url = {https://www.sciencedirect.com/science/article/pii/S0045782523004383},
	author = {C. Manni and E. Sande and H. Speleers},
	
}

@article{Meng2024,
	title = {The Convergence Analysis of a Class of Stabilized Semi-Implicit Isogeometric Methods for the {C}ahn-{H}illiard Equation},
	volume = {102},
	ISSN = {1573-7691},
	url = {http://dx.doi.org/10.1007/s10915-024-02753-5},
	DOI = {10.1007/s10915-024-02753-5},
	pages={26},
	journal = {Journal of Scientific Computing},
	publisher = {Springer Science and Business Media LLC},
	author = {Meng,  X. and Qin,  Y. and Hu,  G.},
	year = {2025},
}

@incollection{Steinbachreview,
	author    = "O. Steinbach and H. Yang",
	booktitle   = " Space-Time Methods:Applications to Partial Differential Equations ",
	editors="O. Steinbach and U. Langer",
	title= "{S}pace-time finite element methods for parabolic evolution equations: discretization, a posteriori error estimation, adaptivity and solution",
	volume="25",
	publisher = "De Gruyter",
	address = "Berlin, Boston",
	pages = "207-248",
	year      = "2019"
}

@article{LANGER2016342,
	title = {Space–time isogeometric analysis of parabolic evolution problems},
	journal = {Computer Methods in Applied Mechanics and Engineering},
	volume = {306},
	pages = {342-363},
	year = {2016},
	issn = {0045-7825},
	author = {U. Langer and S. E. Moore and M. Neumüller},
}

@article{leastsquare,
	title={Space-time least-squares isogeometric method and efficient solver for parabolic problems},
	journal={Mathematics of Computation},
	volume={89},
	number={323},
	year={2020},
	pages={1193-2227},
	author={M. Montardini and M. Negri and G. Sangalli and M. Tani}
}

@article{stigasteinbach,
	title = {An efficient solver for space–time isogeometric {G}alerkin methods for parabolic problems},
	journal = {Computers \& Mathematics with Applications},
	volume = {80},
	number = {11},
	pages = {2586-2603},
	year = {2020},
	
	issn = {0898-1221},
	doi = {https://doi.org/10.1016/j.camwa.2020.09.014},
	url = {https://www.sciencedirect.com/science/article/pii/S0898122120303709},
	author = {G. Loli and M. Montardini and G. Sangalli and M. Tani},
}

@article{Henning2022,
	title = {An ultraweak space-time variational formulation for the wave equation: Analysis and efficient numerical solution},
	volume = {56},
	ISSN = {2804-7214},
	url = {http://dx.doi.org/10.1051/m2an/2022035},
	DOI = {10.1051/m2an/2022035},
	number = {4},
	journal = {ESAIM: Mathematical Modelling and Numerical Analysis},
	publisher = {EDP Sciences},
	author = {Henning,  J. and Palitta,  D. and Simoncini,  V. and Urban,  K.},
	year = {2022},
	pages = {1173–1198}
}

@article{coer,
	author = {Steinbach, O. and Zank, M.},
	year = {2020},
	pages = {154-194},
	title = {Coercive space-time finite element methods for initial boundary value problems},
	volume = {52},
	journal = {ETNA - Electronic Transactions on Numerical Analysis},
	doi = {10.1553/etna_vol52s154}
}

@article{sarathes,
	title={Stability of space-time isogeometric methods for wave propagation problems},
	author={Fraschini, S.},
	journal={arXiv preprint arXiv:2303.15460},
	note={Master’s thesis. Universit`a degli Studi di Pavia},
	year={2021}

}

@article {Ferrari2025,
    AUTHOR = {Ferrari, M. and Fraschini, S.},
     TITLE = {Stability of conforming space-time isogeometric methods for
              the wave equation},
  JOURNAL = {Mathematics of Computation},
    VOLUME = {95},
      YEAR = {2025},
    NUMBER = {358},
     PAGES = {683--719},
      ISSN = {0025-5718,1088-6842},
   MRCLASS = {65M60 (15A12 15B05 65L60)},
  MRNUMBER = {4999540},
       DOI = {10.1090/mcom/4062},
       URL = {https://doi.org/10.1090/mcom/4062},
}

@Inbook{Steinbach2019,
	author="Steinbach, O.
	and Zank, M.",
	editor="Apel, T.
	and Langer, U.
	and Meyer, A.
	and Steinbach, O.",
	title="A Stabilized Space--Time Finite Element Method for the Wave Equation",
	bookTitle="Advanced Finite Element Methods with Applications: Selected Papers from the 30th Chemnitz Finite Element Symposium 2017",
	year="2019",
	publisher="Springer International Publishing",
	address="Cham",
	pages="341--370",
	isbn="978-3-030-14244-5",
	doi="10.1007/978-3-030-14244-5_17",
	url="https://doi.org/10.1007/978-3-030-14244-5_17"
}

@article{Bignardi2025,
	title = {A space–time continuous and coercive formulation for the wave equation},
	volume = {157},
	ISSN = {0945-3245},
	url = {http://dx.doi.org/10.1007/s00211-025-01478-3},
	DOI = {10.1007/s00211-025-01478-3},
	number = {4},
	journal = {Numerische Mathematik},
	publisher = {Springer Science and Business Media LLC},
	author = {Bignardi,  P. and Moiola,  A.},
	year = {2025},
	pages = {1211–1258}
}

@incollection {arxivwave,
    AUTHOR = {L\"oscher, R. and Steinbach, O. and Zank, M.},
     TITLE = {Numerical results for an unconditionally stable space-time
              finite element method for the wave equation},
 BOOKTITLE = {Domain decomposition methods in science and engineering
              {XXVI}},
    SERIES = {Lecture Notes in Computational Science and Engineering},
    VOLUME = {145},
     PAGES = {625--632},
 PUBLISHER = {Springer, Cham},
      YEAR = {2022},
      ISBN = {978-3-030-95024-8; 978-3-030-95025-5},
   MRCLASS = {},
  MRNUMBER = {4703898},
       DOI = {10.1007/978-3-030-95025-5\_68},
       URL = {https://doi.org/10.1007/978-3-030-95025-5_68},
}

@article{arxivwave2,
      title={Inf-sup stable space-time discretization of the wave equation based on a first-order-in-time variational formulation}, 
      author={M. Ferrari and I. Perugia and E. Zampa},
      year={2025},
      journal={arXiv preprint arXiv:2506.05886},
      primaryClass={math.NA},
      url={https://arxiv.org/abs/2506.05886}, 
}

@article{Ferrarihamil,
	author = {{M. Ferrari} and {S. Fraschini} and {G. Loli} and {I. Perugia}},
	title = {Unconditionally stable space–time isogeometric discretization for the wave equation in {H}amiltonian formulation},
	DOI= "10.1051/m2an/2025056",
	url= "https://doi.org/10.1051/m2an/2025056",
	journal = {ESAIM: M2AN},
	year = 2025,
	volume = 59,
	number = 5,
	pages = "2447-2490",
}

@inproceedings{Zank2021,
title = "Higher-order space-time continuous Galerkin methods for the wave equation",
author = "M. Zank",
year = "2021",
doi = "10.23967/wccm-eccomas.2020.167",
language = "English",
volume = "700",
series = "WCCM-ECCOMAS Congress",
publisher = "Scipedia",
pages = "1--10",
booktitle = "14th WCCM-ECCOMAS Congress 2020",
}

@article{Fraschini2024,
	title = {An unconditionally stable space–time isogeometric method for the acoustic wave equation},
	volume = {169},
	ISSN = {0898-1221},
	url = {http://dx.doi.org/10.1016/j.camwa.2024.06.009},
	DOI = {10.1016/j.camwa.2024.06.009},
	journal = {Computers \& Mathematics with Applications},
	publisher = {Elsevier BV},
	author = {Fraschini,  S. and Loli,  G. and Moiola,  A. and Sangalli,  G.},
	year = {2024},
	pages = {205–222}
}

@article{Loli2025,
	title = {Space–time isogeometric analysis: a review with application to wave propagation},
	volume = {82},
	ISSN = {2281-7875},
	url = {http://dx.doi.org/10.1007/s40324-025-00391-x},
	DOI = {10.1007/s40324-025-00391-x},
	number = {4},
	journal = {SeMA Journal},
	publisher = {Springer Science and Business Media LLC},
	author = {Loli,  G. and Sangalli,  G.},
	year = {2025},
	pages = {633–644}
}

@Book{nurbsbook,
	title = {Spline Functions: Basic Theory},
	ISBN = {9780511618994},
	url = {http://dx.doi.org/10.1017/CBO9780511618994},
	DOI = {10.1017/cbo9780511618994},
	publisher = {Cambridge University Press},
	author = {Schumaker,  L.},
	year = {2007}, 
}

@article{invineq,
	author = {Takacs, S. and Takacs, T.},
	title = {Approximation error estimates and inverse inequalities for {B}-splines of maximum smoothness},
	journal = {Mathematical Models and Methods in Applied Sciences},
	volume = {26},
	number = {07},
	pages = {1411-1445},
	year = {2016},
	doi = {10.1142/S0218202516500342},
}

@article{geopde,
	title = {A new design for the implementation of isogeometric analysis in Octave and Matlab: Geo{P}{D}{E}s 3.0},
	journal = {Computers \& Mathematics with Applications},
	volume = {72},
	number = {3},
	pages = {523-554},
	year = {2016},
	issn = {0898-1221},
	doi = {https://doi.org/10.1016/j.camwa.2016.05.010},
	url = {https://www.sciencedirect.com/science/article/pii/S0898122116302681},
	author = {R. Vázquez},
}

@misc{github,
	author = {S. Fraschini and G. Loli and A. Moiola and G. Sangali},
	title = {XTIgA-Waves},
	year =2023,
	 howpublished = {\url{https://github.com/XTIgA-Waves/XTIgA-Waves.git}},

}

@incollection{Spence2015,
  author    = {Spence, E. A.},
  title     = {``{W}hen All Else Fails, Integrate by Parts'': An Overview of New and Old Variational Formulations for Linear Elliptic {PDE}s},
  booktitle = {Unified Transform for Boundary Value Problems: Applications and Advances},
  publisher = {Society for Industrial and Applied Mathematics},
  year      = {2014},
  editor    = {A. S. Fokas and B. Pelloni},
  series    = {Other Titles in Applied Mathematics},
  chapter   = {6},
  pages     = {93-159},
  doi       = {10.1137/1.9781611973822.ch6},
  url       = {https://epubs.siam.org/doi/10.1137/1.9781611973822.ch6}
}

@unpublished{Moiola2021Scattering,
  author = {Moiola, A.},
  title = {Scattering of time-harmonic acoustic waves: {H}elmholtz equation, boundary integral equations and {B}{E}{M}},
  year = {2021},
  note = {Classnotes},
  howpublished = {Lecture notes}
}

@article{Chaudhary2026,
  author = {Chaudhary, S. and Chauhan, S. and Montardini, M.},
  title = {Space-time isogeometric method for a nonlocal parabolic problem},
  journal = {Advances in Computational Mathematics},
  year = {2026},
  volume = {52},
  pages = {12},
  doi = {10.1007/s10444-026-10285-9},
  url = {https://link.springer.com/article/10.1007/s10444-026-10285-9}
}

\end{document}